\newtheorem{maintheorem}{Theorem}
\newtheorem{theorem}{Theorem}[section]
\newtheorem{proposition}[theorem]{Proposition}
\newtheorem{lemma}[theorem]{Lemma}
\theoremstyle{remark}
\newtheorem{remark}[theorem]{Remark}
\theoremstyle{definition}
\newtheorem{definition}[theorem]{Definition}
\newcommand{\N}{\mathbb{N}}
\newcommand{\Z}{\mathbb{Z}}
\newcommand{\R}{\mathbb{R}}
\newcommand{\T}{\mathbb{T}}
\newcommand{\Q}{\mathbb{Q}}
\newcommand{\X}{\mathcal{X}}
\newcommand{\Y}{\mathcal{Y}}
\newcommand{\U}{\mathcal{U}}
\newcommand{\e}{\varepsilon}
\newcommand{\E}{\mathcal{E}}
\newcommand{\norm}[1]{\left\lVert#1\right\rVert}
\newcommand{\M}{\mathcal{M}}
\renewcommand{\div}{\mathrm{div}\,}
\title[On some typicality and density results for vector fields]{On some typicality and density results \\ for nonsmooth vector fields \\ and the associated ODE and continuity equation}
\author{Francesco Cianfrocca}
\author{Stefano Modena}
\address{GSSI, I-67100 L'Aquila, Italy}
\email{francesco.cianfrocca@gssi.it, stefano.modena@gssi.it}
\thanks{The authors wish to thank Luigi De Rosa for several intersting and useful discussion about the topic of this paper, in particular for pointing out Orlicz's result \cite{Orlicz32}. The authors thank the Max Planck Institute for Mathematics in the Sciences in Leipzig, where part of this work developed. Both authors are member of the GNAMPA group at INDAM}
\date{\today}
\begin{document}

\begin{abstract}
These notes address two problems. First, we investigate the question of ``how many'' are (in Baire sense) vector fields in $L^1_t L^q_x$, $q \in [1, \infty)$, for which existence and/or uniqueness of local, distributional solutions to the associated continuity equation holds. We show that, in certain regimes, existence of solutions (even locally in time, for at least one nonzero initial datum) is a meager property, whereas, on the contrary, uniqueness of solutions is a generic property. 
Secondly, despite the fact that non-uniqueness is a meager property, we prove that (Sobolev) counterexamples to uniqueness, both for the continuity equation and for the ODE, in the spirit of \cite{BrueColomboKumar2024} and \cite{kumar2023} respectively, form a dense subset of the natural ambient space they live in.

\end{abstract}

\maketitle

\section{Introduction}

In these notes we discuss some residuality and density properties for vector fields $u: [0,T] \times \T^d \to \R^d$ defined on the  $d$-dimensional flat torus $\T^d = \R^d/\Z^d$ and on some time interval $[0,T]$ with $T \in (0, + \infty)$. The paper is divided in two parts. In the first part we prove some residuality results for nonsmooth vector fields and the associated continuity equation, whereas in the second part we prove some density results concerning counterexample to uniqueness for the continuity equation and the ODE associated to nonsmooth vector fields.

%We will always assume that $u$ is incompressible, i.e. $$\div u = 0$$ in the sense of distributions. 

\subsection{Residuality results for the continuity equation}
In the first part of the paper (Sections \ref{s:non-existence},\ref{s:residuality_uniqueness}) we consider vector fields
\begin{equation*}
    u \in L^1([0, T]; L^q(\T^d; \R^d)) =: L^1_t L^q_x
\end{equation*}
for some $q \in [1, \infty)$. For any given $u$, we consider the continuity equation 
\begin{equation}
\label{eq:PDE}
    \begin{cases}
    \partial_t \rho + \div (\rho u) & = 0, \\
    \rho|_{t=0} = \bar \rho
    \end{cases}   
\end{equation}
associated to $u$, where 
\begin{equation*}
    \bar\rho \in L^{q'}(\T^d)
\end{equation*}
is a given initial datum and we look for (local) distributional solutions to \eqref{eq:PDE}, i.e. maps $$\rho\in L^{\infty}_{loc}([0,\tau);L^{q'}(\T^d))$$ defined on some time interval $[0,\tau)$, $\tau \in (0,T]$, solving the Cauchy problem \eqref{eq:PDE} in the sense of distributions on $[0,\tau) \times \T^d$, i.e.  
    \begin{equation}
    \label{eq:def-weak-sln}
        \int_{0}^{\tau}\int_{\T^d} (\partial_t \varphi(t,x) + u(t,x)\cdot \nabla \varphi(t,x)) \rho(t,x) \,dx\,dt +\int \varphi(0,x) \bar\rho(x) dx =0
    \end{equation}
for all $\varphi\in C^{1}_c([0,\tau)\times \T^d).$ The choice of considering local solution is dictated by the fact that, without additional assumptions on $\div u$, solutions may not be defined on the whole time interval $[0, T]$. Notice also that considering initial data in $L^{q'}$ and looking for solutions in $L^\infty_{t,loc} L^{q'}_x$ is a natural choice, because in this way, $\rho u$ is locally integrable on $[0,\tau) \times \T^d$ and thus both $\rho$ and $\rho u$ have a meaning as distributions.

Goal of the first part of the paper is to investigate the following general question (stated for the moment in a quite rough way): how many are (in terms of Baire category arguments\footnote{Given a complete metric space $(X,d)$ and a subset $E \subseteq X$, we say that
\begin{enumerate}
    \item $E$ is \emph{nowhere dense} if its closure has empty interior;
    \item $E$ is of \emph{first category} or \emph{meager} if it can be written as a countable union of nowhere dense sets;
    \item $E$ is \emph{residual} if its complement $E^c = X \setminus E$ is of first category.
\end{enumerate}
Roughly speaking, first category and residuality are a ``topological version'' of, respectively, zero measure and full measure subsets of a measure space. As it happens for measure spaces, a countable union of first category sets is still first category and countable intersection of residual sets is still residual. Similarly, if $E$ is of first category and $A \subseteq E$, then $A$ is first category as well; if $E$ is residual and $E \subseteq B$, then $B$ is residual.}) the vector fields in $L^1_t L^q_x$, $q \in [1, \infty)$ for which existence and/or uniqueness of (local) solutions to \eqref{eq:PDE} holds?

\subsubsection{The case of continuous vector fields}
\label{sss:intro-continuous-vf}

The motivation for asking this general question comes from the fact that, in the case one considers continuous (in space) vector fields $u \in L^1([0,T]; C(\T^d; \R^d))$ and \emph{measure valued solutions} to the continuity equation
\begin{equation}
    \label{eq:measure-valued}
    \left\{
    \begin{aligned}
        \partial_t \mu_t + \div (u \mu_t) & = 0, \\
        \mu_0 & = \bar \mu  \in \mathcal{M}(\T^d) := \{\text{Radon measures on } \T^d\},
    \end{aligned}
    \right.
\end{equation}
a definite answer to this question is available.

First of all, recall that a \emph{measure valued weak solution} to \eqref{eq:measure-valued} is a family of Radon measures $\{ \mu_t\}_{t \in [0,T]}$ with ${\rm ess \, sup}_{t \in [0,T]} \| \mu_t\|_{\M(\T^d)} < \infty$ (here $\|\cdot\|_{\M(\T^d)}$ denotes the standard total variation norm on the space $\M(\T^d)$ of Radon measures) 
such that \eqref{eq:def-weak-sln} holds with $\rho(t,x)dx$ and $\bar \rho(x)dx$ substituted respectively by $d \mu_t(x)$ and $d \bar \mu(x)$. Notice that the space of Radon measures is, in the case of continuous vector fields, by duality, the natural space for the densities to live in. 

Concerning the \emph{existence property}, it is well known that \emph{for every}  initial datum $\bar \mu \in \mathcal{M}(\T^d)$, there is at least one (measure valued) weak solution $\{\mu_t\}_{t \in [0, T]} \subseteq \mathcal{M}(\T^d)$

The proof follows from a compactness argument: one considers a regularization $u^\e$ of the vector field $u$, with $u^\e \to u$ strongly in $L^1([0,T]; C(\T^d))$, and the associated continuity equation \eqref{eq:measure-valued} with $u^\e$ instead of $u$. For each $\e>0$, the (global) solution $\{\mu^\e_t\}_{t \in [0,T]}$ to the regularized equation is defined by
\begin{equation}
\label{eq:meas-sln}
\mu^\e_t := X^\e(t)_\sharp \bar \mu,
\end{equation}
where $X^\e$ is the flow map associated to $u_\e$, i.e. the solution to 
\begin{equation*}
    \partial_t X^\e (t,x) = u^\e (t, X^\e(t,x)), \qquad X^\e(0,x) = x.
\end{equation*}
From \eqref{eq:meas-sln} it also follows that
\begin{equation}
    \label{eq:bb-div-meas}
    \sup_{t \in [0,T]} \| \mu^\e_t\|_{\M(\T^d)}  = \|\bar \mu\|_{\M(\T^d)}.
\end{equation}
 
From \eqref{eq:bb-div-meas}, a weakly* convergining subsequence $\mu^{\e_n}_t \overset{*}{\rightharpoonup}  \mu_t$ (for a.e. $t$) is found. Finally the linearity of the equation implies that $\mu$ is a weak solution to \eqref{eq:PDE}.

On the other side, concerning the \emph{uniqueness property}, the Cauchy-Lipschitz theory ensures that, if $u \in L^1_t C_x$ is uniformly Lipschitz continuous (in the space variable), then uniqueness of solutions to \eqref{eq:measure-valued} holds. In terms of Baire category argument, this only implies that there is a meager set (namely the set of Lipschitz vector fields) for which uniqueness of solutions to \eqref{eq:measure-valued} holds, but, in principle, vector fields for which uniqueness holds may be many more. Indeed, a result by Bernard \cite{Bernard2008} shows that the set of all vector fields $u \in L^1_t C_x$ for which uniqueness of solutions to \eqref{eq:measure-valued} holds is residual (in the sense of Baire). 

Summarizing, we can say that for almost all continuous vector field (in Baire sense), the continuity equation is well posed in the space of measure valued solutions.  

For rougher vector field (precisely for vector fields in $L^1_t L^q_x$, $q< \infty$, which is the case considered in this paper) it turns out that the situation is quite different as far as the existence property is considered, whereas it is very similar as far as uniqueness property is considered. 

%\newpage
\subsubsection{Existence of local solutions for $u \in L^1_t L^q_x$}

As for continuous vector fields, also in the case of $u \in L^1_t L^q_x$ existence of solutions to the Cauchy problem \eqref{eq:PDE}  is usually proven by a compactness argument, where the counterpart to crucial estimate \eqref{eq:bb-div-meas} is the estimate

\begin{equation}
\label{eq:bb-div}
    \| \rho_\e\|_{L^\infty ([0,T]; L^{q'}(\T^d))} \leq M \| \bar \rho\|_{L^{q'}},
\end{equation}
for some $M>0$, where 
\begin{equation}
    \label{eq:rho-sln}
    \rho_\e(t, \cdot) : = X^\e(t)_\sharp \bar \rho,
\end{equation}
 similar to \eqref{eq:meas-sln}.

Differently than in the case of measure valued solutions, the crucial bound \eqref{eq:bb-div} can not be obtained for free from \eqref{eq:rho-sln} (recall that $q<\infty$ and thus $q'>1$). In general, additional properties on $u$ are assumed, related to its rate of ``compressibility''.

Standard situations are the following two. If $u$ is incompressible, then \eqref{eq:bb-div} holds as an equality with $M=1$. More in general, if $\div u \in L^\infty([0,T] \times \T^d)$, then \eqref{eq:bb-div} holds with $M$ depending on $\|\div u\|_{L^\infty}$. Slightly less standard, but still very well known, is the case of \emph{nearly incompressible} vector fields \cite{Bressan2003,NoteCamillo2007}, i.e. vector fields $u \in L^1_t L^q_x$ for which there is a weak solution $\vartheta \in L^\infty ([0,T] \times \T^d)$ to the continuity equation $\partial_t \vartheta + \div (\vartheta u) = 0$ and a constant $C>0$ such that $1/C \leq \vartheta \leq C$. Also in this case \eqref{eq:bb-div} holds, with $M$ depending on the constant $C$.

In all cases mentioned above, existence of \emph{global} solutions to \eqref{eq:PDE} \emph{for all} initial data $\bar \rho \in L^{q'}(\T^d)$ holds. To the best of our knowledge, no other sufficient or necessary criteria on the vector field $u \in L^1_t L^q_x$ are known which ensure the existence of (at least local) solutions for (at least some) initial data $\bar \rho \in L^{q'}(\T^d)$. 

Nevertheless, one would naively expect that, for a generic vector field $u \in L^1_t L^q_x$, existence of local solutions to \eqref{eq:PDE} holds, if not \emph{for all} initial data, at least for some nonzero initial datum. 

For instance, it is not difficult to see that for the (autonomous and bounded) vector field $u(x) = - x/|x|$ existence of \emph{local} solutions holds for all initial data which are supported far away from the origin\footnote{This is not a periodic vector field, but it is possible to design periodic analogs of this example.}. This vector field will also turn out to be the building block of the proof of Theorem \ref{thm:existence-lq} in Section \ref{s:non-existence}.

We thus define, for any given $u \in L^1_t L^q_x$, the set
    \begin{equation}
    \label{eq:exist-set}
    \begin{aligned}
            \mathcal{D}_q(u) & := \big\{ \bar \rho \in L^{q'} \,:\, \text{$\exists \, \tau>0 \text{ and }\rho\in L^{\infty}_{loc}([0,\tau);L^{q'}(\T^d))$ local weak solution} \\ 
            & \qquad  \text{to \eqref{eq:PDE}} \text{ with } \rho|_{t=0} = \bar \rho \big\}
    \end{aligned}
    \end{equation}
of all initial data admitting at least one local solution. Clearly it always holds $0 \in \mathcal{D}_q(u)$. Therefore the mildest \emph{existence property} we can require on a given vector field $u$ is that $\mathcal{D}_q(u)\supsetneqq \{0\}$, i.e. requiring that there is at least one (nonzero) initial datum from which it stars at least one solution, defined at least on a local time interval. 

For nearly incompressible vector fields (and thus also for divergence free or bounded divergence vector fields), $\mathcal{D}_q(u) = L^{q'}(\T^d)$. Similarly, the naive expectation discussed above can be phrased by saying that, for a generic $u \in L^1_t L^q_x$, the very mild existence property $\mathcal{D}_q(u) \supsetneqq \{0\}$ holds. 

The following theorem is the first result of the paper and it states such naive expectation is completely wrong:  almost no (in Baire sense) vector field $u \in L^1_t L^q_x$ has such existence property. 
%$\bar \rho =  0$ is the only initial datum admitting solutions (even locally in time).
\begin{maintheorem}[Residuality of non-existence]
\label{thm:existence-lq}
    Let $q \in [1, \infty)$. The set
    \begin{equation*}
       \E_q:= \{ u \in L^1([0,T]; L^q(\T^d)) \, : \, \mathcal{D}_q(u) \supsetneqq \{0\} \}
    \end{equation*}
    is meager in $L^1([0,T]; L^q(\T^d))$. 
\end{maintheorem}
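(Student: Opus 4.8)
The plan is to write $\E_q = \bigcup_{n \in \N} \E_q^n$ as a countable union of closed sets with empty interior, thereby exhibiting $\E_q$ as meager. A natural candidate for the pieces is to fix a quantitative version of the existence property: for $n \in \N$, let $\E_q^n$ be the set of $u \in L^1_t L^q_x$ for which there exists a nonzero $\bar\rho \in L^{q'}$ with $\|\bar\rho\|_{L^{q'}} \le n$, $\|\bar\rho\|_{L^{q'}} \ge 1/n$, and a weak solution $\rho \in L^\infty([0,1/n]; L^{q'}(\T^d))$ to \eqref{eq:PDE} on $[0,1/n)$ with $\|\rho\|_{L^\infty_t L^{q'}_x} \le n$. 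Since any nonzero datum in $\mathcal{D}_q(u)$ can be rescaled and the solution restricted to a small time interval, one checks $\E_q = \bigcup_n \E_q^n$. The two tasks are then: (i) each $\E_q^n$ is closed, and (ii) each $\E_q^n$ has empty interior.

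For closedness (i), suppose $u_k \to u$ in $L^1_t L^q_x$ with $u_k \in \E_q^n$, witnessed by $(\bar\rho_k, \rho_k)$. The bounds give weak-$*$ compactness: up to subsequences $\bar\rho_k \rightharpoonup \bar\rho$ in $L^{q'}$ and $\rho_k \overset{*}{\rightharpoonup} \rho$ in $L^\infty_t L^{q'}_x$. The only delicate point is passing to the limit in the nonlinear-looking term $\int u_k \cdot \nabla\varphi \, \rho_k$; here one uses that $u_k \to u$ strongly in $L^1_t L^q_x$ while $\rho_k \nabla\varphi$ is bounded in $L^\infty_t L^{q'}_x$ (with $\nabla\varphi$ bounded), so the product converges. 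The genuine subtlety is ensuring the limit datum $\bar\rho$ is still nonzero: the lower bound $\|\bar\rho_k\|_{L^{q'}} \ge 1/n$ only survives weak convergence as $\|\bar\rho\|_{L^{q'}} \le \liminf$, which is the wrong direction. I expect the fix is to replace the $L^{q'}$ lower bound by a lower bound on a fixed finite collection of smooth linear functionals (e.g. testing against finitely many fixed Fourier modes), which is preserved under weak convergence; one arranges the countable family $\{\E_q^n\}$ to still cover $\E_q$ by a diagonal/enumeration argument. This bookkeeping is routine but must be done carefully.

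The heart of the matter — and the main obstacle — is (ii), showing $\E_q^n$ has empty interior, i.e. every $u \in L^1_t L^q_x$ can be approximated in $L^1_t L^q_x$ by vector fields $v \notin \E_q^n$: vector fields so strongly "compressing" that no nonzero datum of the prescribed size can support a solution of the prescribed size on the prescribed time interval. This is where the building block $x \mapsto -x/|x|$ from the introduction enters. The strategy is to superimpose on $u$ a perturbation built from many rescaled copies of this radial sink placed at a fine grid of points in $\T^d$, each copy supported in a tiny ball. Because $x/|x|$ is bounded and each bump has small support, the $L^1_t L^q_x$ norm of the perturbation is small (this uses $q < \infty$ crucially — the perturbation is large in $L^\infty$ but small in $L^q$), so $v := u + (\text{perturbation})$ is close to $u$. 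On the other hand, the flow of each radial sink collapses a ball of radius $r$ to the origin in time $O(r)$; choosing $r$ small compared to $1/n$, every point of $\T^d$ is swept into one of the sink centers within time $1/n$. By the representation $\rho_t = X(t)_\sharp \bar\rho$ for smooth approximations and a limiting/duality argument, any candidate solution must then concentrate all of $\bar\rho$'s mass onto a finite set by time $1/n$, forcing $\rho_t$ to be a measure with atoms — hence $\rho_t \notin L^{q'}$ unless $\bar\rho = 0$. Making this rigorous at the level of distributional $L^\infty_t L^{q'}_x$ solutions (rather than for the flow) is the technical crux: I would argue by testing \eqref{eq:def-weak-sln} against suitable $\varphi$ adapted to the sink geometry, or equivalently by a renormalization/comparison argument showing that near each sink the only way to have bounded $L^{q'}$ norm is to have the density vanish there initially, and then iterate over the grid to kill $\bar\rho$ entirely. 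The quantitative control — how small the perturbation must be versus how many sinks versus the time $1/n$ and the size bound $n$ — is the calculation I would need to get right, but the qualitative mechanism (bounded-in-$L^q$, unbounded-in-$L^\infty$ radial sinks that crush mass to atoms) is clear.
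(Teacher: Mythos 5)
Your overall architecture coincides with the paper's: write $\E_q$ as a countable union of sets carrying a quantitative bound $\|\rho\|_{L^\infty_t L^{q'}_x}\le n$ on a rational time interval, quantify the non-triviality of $\bar\rho$ by testing against a countable family of fixed smooth functions (exactly the fix you guess for the failure of the lower bound $\|\bar\rho\|_{L^{q'}}\ge 1/n$ under weak convergence), prove closedness by weak* compactness plus strong convergence of the fields, and attack the empty-interior step with rescaled radial sinks. The closedness step and the covering are essentially as in the paper (which additionally uses an equicontinuity-in-time lemma to get $C_w([0,\tau];L^{q'})$ compactness, but your version of the limit passage is workable).

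The genuine gap is the mechanism you propose for the empty-interior step. You aim to show that the perturbed field admits \emph{no} nonzero $L^{\infty}_tL^{q'}_x$ solution because mass collapses onto atoms. This is the wrong target. If the sinks are smooth (and they must be, both to run the explicit pushforward/change-of-variables computation and to compose with the flow of the background field), the perturbed field is smooth, so for every datum the solution exists globally, is unique, and stays in $L^{q'}$ for all time --- no atom ever forms; your claim ``$\rho_t\notin L^{q'}$ unless $\bar\rho=0$'' is simply false in that setting. If instead you keep the genuine singularity $-x/|x|$, then the Lagrangian pushforward does form atoms, but you would have to rule out \emph{every} distributional solution, and for a non-smooth, strongly compressing field distributional solutions need not be given by the flow; this is a hard rigidity statement you do not prove, and it is unnecessary. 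The correct conclusion --- and the one the paper proves --- is \emph{norm inflation}: the unique solution of the smooth perturbed field satisfies $\sup_{s\le\tau}\|\rho(s)\|_{L^{q'}}>n$, which already expels the field from your set $\E_q^n$ because your decomposition carries the bound $\|\rho\|_{L^\infty_tL^{q'}_x}\le n$. Quantitatively this follows from a duality/change-of-variables estimate of the form $\delta\le n\|\psi\|_{L^q(A)}+\eta^{1/q}\|\psi\|_{L^q}\|\rho(\tau)\|_{L^{q'}}$, where $A$ is the small set on which the Jacobian of the sink flow is not $\le\eta$; letting $\eta\to0$ makes the right-hand side force $\|\rho(\tau)\|_{L^{q'}}$ above $n$. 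You have all the ingredients for this but reach for a stronger, unprovable statement instead.

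Two further points would need repair even with the mechanism corrected. First, your smallness argument is inconsistent: sinks ``each supported in a tiny ball'' with total support of small measure give a small $L^q$ norm but cannot sweep \emph{every} point of $\T^d$ into a center; the paper instead tiles the torus with $\lambda^d$ cells and rescales as $\tfrac{1}{\tau\lambda}v(\lambda x)$, so the perturbation is small already in $L^\infty$ (small amplitude suffices because each cell has diameter $1/\lambda$ and the collapse time $\tau$ is fixed); the role of $q<\infty$ is not the smallness of the perturbation but the fact that compression inflates the $L^{q'}$ norm only when $q'>1$. Second, for a general background $u$ one cannot just set $v=u+(\text{perturbation})$: the transport by $u$ destroys the sink geometry. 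One must first reduce to smooth $u$ by density and then conjugate the sink field by the flow of $u$ via $w=u+(\nabla\Phi^u)^{-1}v(\Phi^u)$, so that in the Lagrangian coordinates of $u$ the perturbation acts as a pure sink; this is the content of the paper's composition lemma and is absent from your sketch.
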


\subsubsection{Uniqueness of solutions}

Concerning the uniqueness of solutions to \eqref{eq:PDE}, for vector fields in  $L^1_t L^q_x$, the situation is very similar to the case of continuous vector fields discussed above. Indeed, if $u \in L^1_t L^q_x$ is Lipschitz continuous (in the space variable), then uniqueness of solutions to \eqref{eq:PDE} in $L^\infty_t L^{q'}_x$ holds. 

Nevertheless, we prove below that, similarly to Bernard's result \cite{Bernard2008} for continuous vector fields, many more vector fields in $L^1_t L^q_x$ than Lipschitz continuous ones have the uniqueness property.   In view of the fact that existence of solutions is not always guaranteed, we first give the following definition.

\begin{definition}
    A vector field $u\in L^{1}
([0,T];L^q(\T^d;\R^d))$ has the \emph{uniqueness property} if the following holds. Let $\bar\rho \in \mathcal{D}_q(u)$. If  $\rho_1,\rho_2\in L^{\infty}_{loc}([0,\tau);L^{q'}(\T^d))$ are two local solutions (defined on the same time interval $[0,\tau)$) with the same initial datum $\bar\rho$, then $\rho_1 \equiv \rho_2$. We also set
    \begin{equation}
    \label{eq:uniq-set}
        \mathcal U_q := \big\{ u \in L^1_t L^q_x \, : \,
        \text{$u$ has the uniqueness property} \big\}.
    \end{equation}
\end{definition}

We now state the second main result of this paper, and then we comment on it. 

\begin{maintheorem}[Residuality of uniqueness]
 \label{thm:uniq-resid}
    Let $\mathcal X \subseteq L^1([0,T]; L^q(\T^d))$ and let $d_\X$ be a distance on $\X$ such that 
    \begin{enumerate}[label=(\roman*)]
        \item     $(\X, d_\X)$ is a complete metric space,
        \item\label{it:distances}  $\|u-v\|_{L^1_t L^q_x} \leq d_\X(u,v)$.
    \end{enumerate}
    If $\mathcal{U}_q \cap \X$ is dense in $(\X, d_\X)$, then it is residual in $(\X, d_\X)$.
\end{maintheorem}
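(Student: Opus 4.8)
The plan is to show that $\X \setminus \mathcal{U}_q$ is \emph{meager} in $(\X,d_\X)$, by writing it as a countable union of closed sets with empty interior; completeness (i) then guarantees that the conclusion ``$\mathcal{U}_q\cap\X$ is residual'' is substantive (and in particular dense, consistently with the hypothesis). The first step is a reformulation of non-uniqueness that removes the initial datum from the picture: by linearity of the weak formulation \eqref{eq:def-weak-sln}, a field $u$ \emph{fails} the uniqueness property if and only if there exist $\tau\in(0,T]$ and $w\in L^\infty_{loc}([0,\tau);L^{q'}(\T^d))$ with $w\not\equiv 0$ that is a weak solution of \eqref{eq:PDE} on $[0,\tau)$ with \emph{zero} initial datum. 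Indeed, for the ``only if'' direction take $w=\rho_1-\rho_2$ where $\rho_1\not\equiv\rho_2$ solve with a common datum $\bar\rho\in\mathcal D_q(u)$; for ``if'', take $\bar\rho=0\in\mathcal D_q(u)$, $\rho_1=w$, $\rho_2=0$.

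Next I fix once and for all an enumeration $\{\tau_\ell\}_\ell$ of $\Q\cap(0,T]$ and a countable family $\{\varphi_j\}_j\subseteq C^1_c((0,T)\times\T^d)$ rich enough that whenever $w\in L^1_{loc}$ does not vanish a.e.\ on $(0,s)\times\T^d$ there is some $j$ with $\spt\varphi_j\subseteq(0,s)\times\T^d$ and $\int\varphi_j w\neq 0$ (e.g.\ a countable $C^0$-dense family inside $\bigcup_{0<a<b<T,\,a,b\in\Q}C^1_c((a,b)\times\T^d)$). For positive integers $\ell,n,j,m$ with $\spt\varphi_j\subseteq(0,\tau_\ell)\times\T^d$, I let $\mathcal A_{\ell,n,j,m}$ be the set of $u\in\X$ for which there exists $w\in L^\infty((0,\tau_\ell);L^{q'}(\T^d))$ with $\|w\|_{L^\infty_t L^{q'}_x}\leq n$, solving \eqref{eq:PDE} on $[0,\tau_\ell)$ with zero initial datum in the sense of \eqref{eq:def-weak-sln}, and such that $\big|\int_0^{\tau_\ell}\!\!\int_{\T^d}\varphi_j w\,dx\,dt\big|\geq 1/m$. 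Using the reformulation above, the monotone convergence fact that $|\{w\neq 0\}\cap([0,s]\times\T^d)|\to|\{w\neq 0\}\cap([0,\tau)\times\T^d)|>0$ as $s\uparrow\tau$ (to trade a general final time for a rational $\tau_\ell<\tau$, restricting test functions to get a solution on the smaller interval), and the boundedness of $w$ on the compact interval $[0,\tau_\ell]$ (to produce the bound $n$), one checks that $\X\setminus\mathcal U_q=\bigcup_{\ell,n,j,m}\mathcal A_{\ell,n,j,m}$.

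It remains to prove that each $\mathcal A_{\ell,n,j,m}$ is closed and has empty interior in $(\X,d_\X)$. Empty interior is immediate, since the complement of $\mathcal A_{\ell,n,j,m}$ contains $\mathcal U_q\cap\X$, which is dense by hypothesis. For closedness, let $u_i\to u$ in $d_\X$ with $u_i\in\mathcal A_{\ell,n,j,m}$, with associated solutions $w_i$, $\|w_i\|_{L^\infty_t L^{q'}_x}\leq n$. Since $q\in[1,\infty)$, the space $L^\infty((0,\tau_\ell);L^{q'}(\T^d))$ is the dual of the separable space $L^1((0,\tau_\ell);L^q(\T^d))$, so, along a subsequence, $w_i\overset{*}{\rightharpoonup} w$ with $\|w\|_{L^\infty_t L^{q'}_x}\leq n$ by weak-$*$ lower semicontinuity. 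Passing to the limit in \eqref{eq:def-weak-sln} (tested, for each fixed $\varphi\in C^1_c([0,\tau_\ell)\times\T^d)$, including those with $\varphi(0,\cdot)\neq 0$, which is what encodes the null initial datum): the term $\int\partial_t\varphi\,w_i\to\int\partial_t\varphi\,w$ because $\partial_t\varphi\in L^1_t L^q_x$; for the transport term write $\int u_i\cdot\nabla\varphi\,w_i-\int u\cdot\nabla\varphi\,w=\int(u_i-u)\cdot\nabla\varphi\,w_i+\int u\cdot\nabla\varphi\,(w_i-w)$, where by Hölder the first summand is bounded by $\|\nabla\varphi\|_{L^\infty}\|u_i-u\|_{L^1_t L^q_x}\|w_i\|_{L^\infty_t L^{q'}_x}\leq n\|\nabla\varphi\|_{L^\infty}\|u_i-u\|_{L^1_t L^q_x}\to 0$ (here \ref{it:distances} is used to turn $d_\X$-convergence into $L^1_t L^q_x$-convergence) and the second vanishes by weak-$*$ convergence since $u\cdot\nabla\varphi\in L^1_t L^q_x$. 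Hence $w$ solves \eqref{eq:PDE} on $[0,\tau_\ell)$ with zero datum for the limit field $u$; moreover $\int\varphi_j w_i\to\int\varphi_j w$ (as $\varphi_j\in L^1_t L^q_x$), so $\big|\int\varphi_j w\big|\geq 1/m$ and in particular $w\not\equiv 0$. Thus $u\in\mathcal A_{\ell,n,j,m}$, which is therefore closed, and $\X\setminus\mathcal U_q$ is a countable union of nowhere dense sets, i.e.\ meager.

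\textbf{The main obstacle} is precisely the closedness step: it hinges on the ``strong$\times$weak'' product structure of $u_i\cdot\nabla\varphi\,w_i$ — strong $L^1_t L^q_x$ convergence of the vector fields (available thanks to \ref{it:distances}) paired against weak-$*$ $L^\infty_t L^{q'}_x$ convergence of the densities, with Hölder-conjugate exponents — together with the combinatorial packaging of non-uniqueness into countably many \emph{closed} ``profiles'' (a fixed rational final time $\tau_\ell$, a uniform bound $n$ on the density, and a fixed test function $\varphi_j$ witnessing a quantitative gap $1/m$). The reformulation of non-uniqueness as the existence of a nonzero solution with null datum is what makes this packaging transparent; everything else is routine once this is in place.
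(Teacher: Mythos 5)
Your proof is correct, and it shares the paper's overall skeleton: reformulate failure of uniqueness, via linearity, as the existence of a nonzero local solution with zero initial datum; cover $\X\cap\mathcal U_q^c$ by countably many closed sets indexed by a rational final time, a uniform $L^\infty_t L^{q'}_x$ bound and a quantitative witness of nonvanishing; pass to the limit in the weak formulation by pairing strong $L^1_tL^q_x$ convergence of the fields (from hypothesis \ref{it:distances}) against weak-$*$ convergence of the densities; and get empty interior for free from the density hypothesis. Where you genuinely diverge is in the closedness step. The paper witnesses nonvanishing through the value of the solution at the final time, $d_n(\rho(\tau),0)\ge 1/k$, with $d_n$ the metric of Lemma \ref{lemma:equicont_solutions} inducing the weak-$*$ topology on $\overline B_n^{q'}$, and obtains compactness from the equicontinuity estimate of that lemma plus Ascoli--Arzel\`a in $C_w([0,\tau];\overline B_n^{q'})$; this forces one to first pass to the weakly-$*$ time-continuous representative. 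You instead witness nonvanishing by a fixed space-time test function $\varphi_j$ and extract the limit by plain weak-$*$ sequential compactness of the ball of $L^\infty_t L^{q'}_x=(L^1_tL^q_x)^*$ (separable predual). Your route is slightly more elementary -- it bypasses the time-continuous representative and the equicontinuity lemma altogether -- while the paper's choice is economical within the paper, since Lemma \ref{lemma:equicont_solutions} is needed anyway for Theorem \ref{thm:existence-lq}. The only point you should make explicit is the duality identification $(L^1_tL^q_x)^*=L^\infty_tL^{q'}_x$ for $1\le q<\infty$ (for $q=1$ it reduces to the scalar duality on the product space), which underlies both your compactness claim and the weak-$*$ lower semicontinuity of the bound by $n$.
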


We state the residuality in an arbitrary subset $\X$ of $L^1_t L^q_x$ (endowed with a distance which makes $\X$ a complete metric space), instead of looking just at the residuality of $\U^q$ in the whole $L^1_t L^q_X$ because, in this way, the statement applies to several different situations. 

For instance, if we take $\X = L^1_t L^q_x$ (the whole space), with $d_\X$ the $L^1_t L^q_x$ distance, then we get the counterpart of Bernard's result \cite{Bernard2008} for $L^1_t L^q_x$ vector fields, namely that $\U^q$ is residual in $L^1_t L^q_x$. 

Similarly, we can consider  
(for fixed $C>0$)
\begin{equation*}
    \begin{aligned}
        \X & = \{ u \in L^1_t L^q_x \, : \, \div u = 0 \}, \text{ or} \\
        \X & = \{ u \in L^1_t L^q_x \, : \, \|\div u\|_{L^\infty_{tx}} \leq C \},\text{ or} \\
        \X & = \{ u \in L^1_t L^q_x \, : \, \text{$u$ is nearly incompressible} \\
        & \qquad \qquad \qquad \quad \text{and there is $\vartheta$ solving \eqref{eq:PDE} with $1/C \leq \vartheta \leq C$} \},
    \end{aligned}
\end{equation*}
again with $d_\X$ being the $L^1_t L^q_x$ distance. In all three cases:
\begin{itemize}
    \item for all $u \in \X$, $\mathcal{D}_q(u) = L^{q'}(\T^d)$, i.e. existence of global solutions is always guaranteed. It thus makes very much sense to investigate if and how often also the uniqueness property holds. 
    \item  $\X$ is a meager subset of $L^1_t L^q_x$, hence the fact that $\U^q$ is residual in $L^1_t L^q_x$ gives us no information about the residuality of $\X \cap \U^q$ in $\X$. 
    \item  Nevertheless, $\X$ is a closed subset of $L^1_t L^q_x$, hence $(\X, d_\X)$ is a complete metric space, if $d_\X$ is the distance induced by the $L^1_t L^q_x$ topology. Now, smooth functions are dense in $\X$, hence, in particular, also $\U_q \cap \X$ is dense in $X$. Therefore  Theorem \ref{thm:uniq-resid} implies that $\U_q \cap \X$ is also residual in $\X$. 
\end{itemize}
Summarizing, for almost all (in Baire sense) vector fields in $\X$ (in all three cases above), there exists a unique solution $\rho \in L^\infty_t L^{q'}_x$ to \eqref{eq:PDE}. 

In view of the third main result of these notes, namely Theorem \ref{thm:density},  another possible application of Theorem \ref{thm:uniq-resid} is the following. Let 
\begin{equation*}
    \X := L^1_t L^q_x \cap L^1_t W^{1,p}_x \cap \{\div u = 0\},
\end{equation*}
(for $p \in [1, q)$), endowed with the distance induced by the norm $\|u\|_{L^1_t (L^q \cap W^{1,p})_x} := \|u\|_{L^1_t L^q_x} + \|u\|_{L^1_t W^{1,p}_x}$\footnote{We do not consider the case $p \geq q$, because in that case, by DiPerna and Lions theory, we would simply have $\U^q \cap \X = \X$.}. Clearly (i) and (ii) in the statement of Theorem \ref{thm:uniq-resid} holds. Moreover, smooth maps (and thus also $\U_q \cap \X$) are dense in $\X$. Therefore Theorem \ref{thm:uniq-resid} implies that for almost all (in Baire sense, w.r.t. the strong norm $\|\cdot\|_{L^1_t (L^q \cap W^{1,p})_x}$ ) vector fields in $\X = L^1_t(L^q \cap W^{1,p})_x \cap \{\div u = 0\}$, there exists a unique solution $\rho \in L^\infty_t L^{q'}_x$ to \eqref{eq:PDE}.

\subsubsection{Comments on literature}

We are not aware of other results where the question about  ``how many'' are (in terms of Baire category) those vector fields for which existence of local solutions to the PDE \eqref{eq:PDE} (for at least some initial data) holds.

On the contrary, concerning uniqueness this is not the first work where such question has been raised.

There are essentially two groups of results in the literature. 

The first group of results claims, in the same spirit as our  Theorems \ref{thm:uniq-resid}  that, roughly speaking, vector fields for which uniqueness holds are residual. We have already mentioned Bernard's work \cite{Bernard2008} for continuous vector fields and measure valued solutions to \eqref{eq:measure-valued}. A similar result, but for ODE, was proven by Orlicz \cite{Orlicz32} in 1932. Our Theorem \ref{thm:uniq-resid} was indeed inspired by \cite{Orlicz32}.

Lions proved in \cite{Lions98} that vector fields for which there exists a unique regular Lagrangian flow are residual in $L^1 ([0,T]; L^1(\T^d))$. 

The notion of regular Lagrangian flow is stricly linked to the one of \emph{renormalized solution} to the continuity equation \eqref{eq:PDE}: a distributional solution $\rho$ to \eqref{eq:PDE} is said \emph{renormalized} if $\beta(\rho)$ is also a distributional solution for all $\beta \in C^1(\R;\R)$ bounded and with bounded derivative. Lions' result in particular implies the residuality in $L^1_t L^1_x$ (and also in $L^1_t L^q_x$) of vector fields for which there exists a unique \emph{renormalized} solutions. The set of such vector fields is, in general, much larger than the set in \eqref{eq:uniq-set}, because every renormalized solution is (by definition) a weak solution, but the viceversa is not true in general (see e.g. \cite{BrueColomboKumar2024}). Hence our Theorem \ref{thm:uniq-resid} is stronger than what can be deduced by Lions' residuality result on regular Lagrangian flows.

The second group of results about ``how many'' are vector fields for which uniqueness of solutions to the PDE \eqref{eq:PDE} holds claims, roughly speaking, exactly the opposite of what we discussed so far. Namely, they claim that the vector fields for which non-uniqueness holds are residual, or, in other words, that a ``generic'' vector fields can not have the ``uniqueness property''. Usually such results are proven by convex integration methods.

In \cite{Crippa2015} Crippa, Gusev, Spirito and Wiedemann consider the set 
\begin{equation*}
    \X = \big\{ u : [0,1] \times \T^d \to \R^d \, : \, |u| \leq 1 \text{ almost everywhere} \big\},
\end{equation*}
endowed with the \emph{weak}-$L^2$ topology. They show that vector fields in $\X$ for which \emph{non-uniqueness} of bounded distributional solutions to the continuity equation \eqref{eq:PDE} holds are \emph{residual} in $X$. 

Similarly, in \cite{SattigLaszlo23} Sattig and Szekelyhidi consider the set 
\begin{equation*}
    \X = \big\{ u  \in C([0,1]; L^q \cap W^{1,p}(\T^d; \R^d))  \, : \, \|u\|_{L^q} \leq C \big\},
\end{equation*}
(with $p,q$ such that $W^{1,p} \not \hookrightarrow L^q$), endowed with the $C_t W^{1,p}_x$ strong topology. Notice that the uniform bound on $L^q$ ensures that $(\X, \| \cdot \|_{C_t W^{1,p}})$ is a complete metric space.

As before, the authors of \cite{SattigLaszlo23} show that vector fields in $\X$ for which \emph{non-uniqueness} of distributional solutions $\rho \in C_t  L^{q'}_x$ to the continuity equation \eqref{eq:PDE} holds are \emph{residual} in $\X$.\footnote{To be precise, the statement of the main theorem in \cite{SattigLaszlo23} is different, as it considers pairs $(u,\rho)$ of (sub)solutions to \eqref{eq:PDE}, but we preferred to mention it in the (slightly imprecise) form above to make it consistent with the rest of the discussion in this introduction.}

There is an apparent contradiction between the first group of results we mentioned (including our Theorem \ref{thm:uniq-resid}), claiming that ``good'' vector fields (i.e. the ones for which uniqueness holds) are residual and the second group of results, claiming that ``bad'' vector fields are residual (and thus  ``good'' ones are of first category). In our opinion, this apparent contradiciton is due to the topology that is considered. In all results where ``good fields are residual'' the space of vector fields is endowed with the (natural) strong topology. In all results where ``bad fields are residual'' the space of vector fields  is endowed with a weaker topology, namely the weak topology in the case of \cite{Crippa2015} and the $W^{1,p}$ topology (which is weaker than the natural $L^q$ topology since, by assumption, $W^{1,p} \not \hookrightarrow L^q$) in \cite{SattigLaszlo23}.

In a word, the situation seems to be the following: when the natural strong topology is considered, almost all vector fields (in the sense of Baire) have the ``uniqueness property''; on the contrary, when a weaker topology is considered, almost all vector fields (again in Baire sense) present non-uniqueness issues. 

We conclude by remarking that the goal of \cite{Crippa2015} and \cite{SattigLaszlo23} was to show the existence of infinitely many vector fields with non-uniqueness, hence it is not really important what topology is put on the space of vector fields under consideration. On the other side, as we pointed out at the beginning of this introduction, our goal in these notes (as well as in the mentioned results by Bernard \cite{Bernard2008} and Lions \cite{Lions98}) is  to understand ``how many'' are vector fields for which uniqueness/non-uniqueness hold, and thus, in this sense, the strong topology (on the space of vector fields under consideration) seems to be the natural choice.

\subsection{Density results for counterexample to uniqueness}

The second part of these notes (Sections \ref{s:density-pde}, \ref{s:density-ode}) concerns a related, but rather different question. In the recent years there has been a considerable interest in the construction of vector fields $u$ having the highest possible regularity (both in terms of incompressibility property and Sobolev regularity) for which uniqueness of solutions to the continuity equation \eqref{eq:PDE} or to the ODE 
\begin{equation}
    \label{eq:ODE1}
    \gamma'(t) = u(t,\gamma(t)), \qquad \gamma(0) =x
\end{equation}
fails. 

More precisely, concerning the PDE \eqref{eq:PDE}, the theory of DiPerna and Lions \cite{DiPernaLions89} ensures that if $u \in L^1_t W^{1,p}_x$ and it has bounded divergence, $\bar \rho \in L^{q'}_x$ and 
\begin{equation}
    \label{eq:dpl}
    \frac{1}{p} \leq  \frac{1}{q},
\end{equation}
(or, as it is commonly written  $1/p + 1/q' \leq 1$) then there exists a unique weak solution $\rho \in L^\infty_t L^{q'}_x$. A relevant question is thus whether condition \eqref{eq:dpl} is sharp. First answers to these questions were given using convex integration in \cite{Modena2018}, \cite{Modena2019}, \cite{Modena2020}, where it was shown that uniqueness may fail if
\begin{equation}
    \label{eq:ms-range}
        \frac{1}{p} > \frac{1}{q} + \frac{1}{d}.
\end{equation}

More recently Bru\`e, Colombo and Kumar \cite{BrueColomboKumar2024} showed that uniqueness may fail also in the larger (and to some extent optimal) range
    \begin{equation}
    \label{eq:bck-range}
        \frac{1}{p} > \frac{1}{q} + \frac{1}{d-1}\frac{p-1}{p}.
    \end{equation}
The construction in \cite{BrueColomboKumar2024} is not based on convex integration, but it is rather a quite explicit construction (performed by a fixed point method) of a particular vector field and a particular non-renormalized solution to \eqref{eq:PDE}. 

Similarly, concerning the ODE \eqref{eq:ODE1} for Sobolev vector fields, it was shown by Caravenna and Crippa in \cite{CaravennaCrippa16} that if $u \in L^1_t W^{1,p}_x$ with $p>d$ and $\div u \in L^\infty_{t,x}$, then
\begin{equation}
\label{eq:non-uniq-ode}
   A_u := \{ x_0 \in \T^d \, : \, \text{there are two different solutions to \eqref{eq:ODE1} starting from $x_0$} \}
\end{equation}
has zero Lebesgue measure. 
On the opposite side, Bru\`e, Colombo and De Lellis \cite{BrueColomboDelellis2021} constructed by convex integration vector fields  $u \in C_t W^{1,p}_x$, for any $p<d$, such that $A_u$ has positive measure (later improved in \cite{PitchoSorella23} where, by a similar construction, the authors were able to show that  $A_u$ may even have full measure). 

A big step forward came from the work of Kumar \cite{kumar2023}, where he constructed a vector field $u \in C_t W^{1,p}_x \cap C^\alpha_{t,x}$ (for any $p<d$ and $\alpha<1$, thus saturating both the Caravenna-Crippa range $p>d$ and the classical Cauchy-Lipschitz threshold) such that $A_u$ has full measure. As observed before for \cite{BrueColomboKumar2024} (which is actually based on \cite{kumar2023}), also this example  is not built by convex integration, but it is rather based on an explicit construction. 

Convex integration is usually refered to as a \emph{flexible} technique: an instance of such flexibility in the results mentioned above is that fact that, for instance, the construction in \cite{Modena2018, Modena2019, Modena2020} can be easily adapted to show that incompressible vector fields in $C_t (L^q \cap W^{1,p})$, with $q,p$ in the range \eqref{eq:ms-range}, for which uniqueness of solutions in $L^\infty_t L^{q'}_x$ fails are \emph{dense} in the ambient space of $C_t (L^q \cap W^{1,p})_x$ incompressible fields.  A similar density result can be stated for the counterexample to ODE uniqueness in \cite{BrueColomboDelellis2021}. %\red{siamo sicuri??}

On the other side, the situation might be different for the explicit counterexample to uniqueness provided in \cite{BrueColomboKumar2024} (for the PDE) and in \cite{kumar2023} (for the ODE). Such examples are more regular (in terms of Sobolev and/or H\"older regularity) than the examples constructed by convex integration, but they are based on a rather explicit construction of a specific vector field for which uniqueness fails, and thus it is not clear if the density property mentioned above continues to hold. 

Goal of the second part of the paper is precisely to understand whether such density property (and thus the \emph{flexibility} which comes \emph{for free} with convex integration) is shared also by the (more regular) explicit counterexamples to uniqueness constructed in \cite{BrueColomboKumar2024} and \cite{kumar2023} without convex integration. The answer is positive and it is stated in the following theorem.

Recall the definition of $\mathcal{U}_q \subseteq L^1_t L^q_x$ in \eqref{eq:uniq-set}. Recall also, for given $u \in C([0,T] \times \T^d)$, the definition \eqref{eq:non-uniq-ode} of the set $A_u$ of starting points for which uniqueness to \eqref{eq:ODE1} fails.

\begin{maintheorem}[Density of counterexamples]
Let $T>0$. 
The following holds.
    \label{thm:density}
    \begin{enumerate}[label=(\roman*)]
        \item  Let $q,p$ be as in \eqref{eq:bck-range}. Let 
        \begin{equation*}
            \Y := \{ u \in L^1([0,T];  (L^q \cap W^{1,p})(\T^d)) \, : \, \div u = 0\} 
        \end{equation*}
        endowed with the natural norm $\|u \|_{L^1_t(L^q \cap W^{1,p})_x}:=\|u\|_{L^1_t L^q_x} + \|u \|_{L^1_t W^{1,p}_x}$.
        The set $\U_q^c \cap \Y$ is dense in $\Y$. 
        \item  Let $1 \leq p<d$ and let 
        \begin{equation*}
            \mathcal{Z} := \{ u \in C([0,T] \times \T^d) \cap  L^1_t  W^{1,p}_x \, : \, \div u = 0\} 
        \end{equation*}
        endowed with the natural norm $\|u \|_{C_{t,x} \cap L^1_t W^{1,p}_x}:=\|u\|_{C_{t,x}} + \|u \|_{L^1_t W^{1,p}_x}$. For every $\alpha \in [0,1)$, the set 
        \begin{equation}
        \label{eq:statement-ode-nonuniq}
        \begin{aligned}
            \big\{ u \in \mathcal{Z}  \, : \, & u \in  C^\alpha_{t,x}  \text{ and $A_u$ has full measure} \}       
        \end{aligned}
        \end{equation}
        is dense in $\mathcal{Z}$.
    \end{enumerate}    
\end{maintheorem}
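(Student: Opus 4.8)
The plan, in both parts, is first to reduce to approximating a \emph{smooth} divergence-free $v$ (legitimate, since smooth divergence-free fields are dense in $\Y$ and in $\mathcal{Z}$: mollify in space-time; mollification preserves $\div u = 0$ and converges in both norms), and then to subtract off $v$ by a change of variables along its flow. Let $X_v(t,\cdot)$ be the smooth, volume-preserving flow of $v$ on $\T^d$, $X_v(0,\cdot)=\mathrm{id}$, and set $\Phi_t:=X_v(t,\cdot)^{-1}$. A direct change of variables in the ODE, resp.\ in the weak formulation \eqref{eq:def-weak-sln}, yields two standard facts: (a) $\gamma$ solves $\dot\gamma=u(t,\gamma)$ iff $\eta_t:=\Phi_t(\gamma_t)$ solves $\dot\eta=\tilde u(t,\eta)$ for $\tilde u:=(\Phi_t)_\sharp(u-v)$, with $\eta_0=\gamma_0$; (b) $\rho$ is a local weak solution of \eqref{eq:PDE} for $u$ iff $\tilde\rho_t:=(\Phi_t)_\sharp\rho_t$ is one for $\tilde u$, with the same initial datum and $\|\tilde\rho_t\|_{L^{q'}}=\|\rho_t\|_{L^{q'}}$. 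Since $\Phi_t$ is a smooth volume-preserving diffeomorphism, the assignment $\tilde u\mapsto u:=v+(X_v(t,\cdot))_\sharp\tilde u$ maps divergence-free fields to divergence-free fields, preserves $C^\alpha_{t,x}$ regularity and membership in $C_{t,x}$, satisfies $\|u-v\|_{L^1_t(L^q\cap W^{1,p})_x}\le C(v)\|\tilde u\|_{L^1_t(L^q\cap W^{1,p})_x}$ and $\|u-v\|_{C^0_{t,x}}\le C(v)\|\tilde u\|_{C^0_{t,x}}$, and — by (a) and (b) — gives $A_u=A_{\tilde u}$ and $u\in\U_q^c\iff\tilde u\in\U_q^c$. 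Hence in both parts it suffices to produce, for each $\delta>0$, a divergence-free $\tilde u$ of norm $<\delta$ in the relevant ambient space which is a counterexample to uniqueness of the appropriate kind.

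\textbf{Part (i).} For this I would take $\tilde u:=\lambda\,w$ with $\lambda\in(0,1)$ small, where $w\in L^1_t(L^q\cap W^{1,p})_x$ is the divergence-free field of \cite{BrueColomboKumar2024} (with $p,q$ as in \eqref{eq:bck-range}) carrying two distinct weak solutions $\rho_1\ne\rho_2$ of \eqref{eq:PDE} with $\rho_1|_{t=0}=\rho_2|_{t=0}$. Rescaling the amplitude of the field is absorbed by a time reparametrization of the solutions: $\rho_i^\lambda(t,x):=\rho_i(\lambda t,x)$ solve \eqref{eq:PDE} for $\lambda w$ on $[0,T/\lambda]\supseteq[0,T]$, share their initial datum, and remain distinct on $[0,T]$ — the last point using that the anomalous solution $\rho_1-\rho_2$ is nontrivial on every interval $(0,\e)$. (Alternatively, if the construction of \cite{BrueColomboKumar2024} is spatially localized in a ball, one may take $\tilde u$ to be a small spatial rescaling $x\mapsto r\,w(t,x/r)$, which keeps the two solutions distinct on the same time interval while sending $\|\tilde u\|_{L^1_t(L^q\cap W^{1,p})_x}\to0$.) In either case $\tilde u\in\U_q^c$ and $\|\tilde u\|_{L^1_t(L^q\cap W^{1,p})_x}\to0$, which together with the reduction proves (i).

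\textbf{Part (ii).} Here I need a divergence-free $\tilde u\in C^\alpha_{t,x}\cap L^1_tW^{1,p}_x$ with $A_{\tilde u}$ of full measure and $\|\tilde u\|_{C^0_{t,x}}+\|\tilde u\|_{L^1_tW^{1,p}_x}$ arbitrarily small. I would obtain it from Kumar's field $u_K\in C^\alpha_{t,x}\cap C_tW^{1,p}_x$ of \cite{kumar2023} by the parabolic rescaling $\tilde u(t,x):=\mu\,u_K(\mu t,x)$, $\mu\in(0,1)$ small. Via $s=\mu t$, a trajectory of $\tilde u$ on $[0,T]$ from $x$ corresponds to a trajectory of $u_K$ on $[0,\mu T]$ from $x$, so $A_{\tilde u}$ equals the non-uniqueness set of $u_K$ considered on $[0,\mu T]$; on the other hand $\|\tilde u\|_{C^0_{t,x}}=\mu\|u_K\|_{C^0_{t,x}}$, $[\tilde u]_{C^\alpha_x}=\mu[u_K]_{C^\alpha_x}$, $[\tilde u]_{C^\alpha_t}=\mu^{1+\alpha}[u_K]_{C^\alpha_t}$, and $\|\tilde u\|_{L^1_tW^{1,p}_x}=\int_0^{\mu T}\|u_K(s,\cdot)\|_{W^{1,p}_x}\,ds\to0$ as $\mu\to0$ by absolute continuity of the integral. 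Thus everything is small once one knows that $A_{u_K}$ has full measure already on every initial time interval $[0,\e]$, $\e>0$; granting this, the reduction above yields (ii).

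\textbf{Main obstacle.} The genuinely non-routine ingredient is precisely this last structural property: one must verify that in the explicit constructions of \cite{kumar2023} (and, for part (i), of \cite{BrueColomboKumar2024}) the singular layers accumulate at the initial time $t=0$, so that the non-uniqueness of the ODE for a.e.\ starting point (resp.\ the nontriviality of the anomalous solution of the PDE) is realized on arbitrarily short initial time intervals and survives restriction to $[0,\e]$. I would check this by inspecting those constructions directly, or — if they are not already arranged this way — by a time-telescoping rearrangement, keeping track of the divergence-free constraint and of the $C^\alpha$ and $W^{1,p}$ regularities. The remaining pieces (density of smooth divergence-free fields, the change-of-variables identities (a)--(b), and the divergence-free-ness and $L^1_tW^{1,p}_x$, $C^0_{t,x}$, $C^\alpha_{t,x}$ bounds for the pushforward under a smooth volume-preserving diffeomorphism) are routine and I would treat them as such.
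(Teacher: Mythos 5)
Your overall architecture — density of smooth divergence-free fields, conjugation by the flow of the smooth field $v$ via $u=v+(X_v(t,\cdot))_\sharp\tilde u$, and then production of arbitrarily small divergence-free counterexamples near the zero field — is exactly the paper's (the conjugation facts (a)--(b) are the paper's Lemma \ref{lemma:composition_lemma}, with Remark \ref{rmk:div-u-equal-div-w} for the divergence-free claim). For Part (i), your \emph{alternative} route is the paper's actual proof: since the Bru\`e--Colombo--Kumar field and its two densities are supported in $Q_1(0)$, the concentration rescaling $\mathtt v_\mu(t,x)=\mu^{-1}\mathtt v(t,\mu x)$, $\rho^i_\mu(t,x)=\rho^i(t,\mu x)$ keeps the two solutions distinct on the \emph{same} time interval and gives $\|\mathtt v_\mu\|_{L^\infty_t L^q_x}=\mu^{-(1+d/q)}\|\mathtt v\|$, $\|\nabla\mathtt v_\mu\|_{L^\infty_t L^p_x}=\mu^{-d/p}\|\nabla\mathtt v\|$ (Proposition \ref{prop:bb_nonuniqu_continuity_sobolev}). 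Your \emph{primary} route (amplitude rescaling plus time reparametrization) is weaker, since it requires the two solutions to separate on $[0,\lambda T]$ for every $\lambda$, i.e.\ instantaneously — a property of the BCK construction you would still have to verify; the concentration route avoids this entirely, so you should simply adopt it.

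For Part (ii) you depart from the paper, and this is where the genuine gap sits. Your pure time-dilation $\tilde u(t,x)=\mu u_K(\mu t,x)$ does give smallness of $\|\tilde u\|_{C^0_{t,x}}+\|\tilde u\|_{L^1_tW^{1,p}_x}$, but it reduces everything to the claim that $A_{u_K}$ is already of full measure when the ODE is restricted to $[0,\mu T]$, which you explicitly leave as "to be checked by inspecting the construction". The paper instead exploits the layered structure of Kumar's field $\mathrm v=\sum_i\mathrm v_i$, with $\operatorname{supp}_t\mathrm v_i\subseteq(\tau_i,\tau_{i+1})$ and $\tau_i\nearrow T_0$: the tail $v_n=\sum_{i\ge n}\mathrm v_i$ is small in $C_tW^{1,p}_x\cap C^\alpha_{t,x}$ by summability, and still satisfies $X^{v_n}(T_0,\mathcal K^n)=\T^d$ for the null set $\mathcal K^n=X^{\mathrm v}(\tau_n,0,\mathcal K)$ (Theorem \ref{prop:nonuniq_bb}); a.e.\ non-uniqueness for the time reversal is then obtained from the regular Lagrangian flow argument of Lemma \ref{lemma:nonuniq_argument}. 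Your deferred property is in fact \emph{true} for the time-reversed Kumar field, and by a soft argument you do not state: that field is smooth on $(0,T_0]$, so two integral curves from the same point that agree at some $t_0>0$ agree on all of $[t_0,T_0]$ by forward uniqueness; since the RLF trajectory and the "reversed" trajectory $\gamma_x$ land in different places at $T_0$ for a.e.\ $x$, they must already differ on every $[0,\e]$. Without either this argument or the paper's truncation, your Part (ii) is incomplete as written; note also that the paper flags (and you should address) the technical point that the flow of the perturbed field is only defined up to the singular time by a continuity extension (Lemmas \ref{lemma:extension_flows}--\ref{lemma:composition_up_to_time_1}), which your conjugation step silently uses.
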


Some remarks about the statements. 

\begin{remark}
    In both cases the density holds in the closed subspace (of $L^1_t (L^q \cap W^{1,p})_x$ or $C_{t,x} \cap L^1_t W^{1,p}_x$ respectively) of divergence free vector field. This is natural, because we are investigating the density of the \emph{set of counterexamples} coming from \cite{BrueColomboKumar2024} and \cite{kumar2023}, which, by definition, contains only divergence free fields and therefore can not be dense in the whole ambient space.
\end{remark}

\begin{remark}
     In both cases (i) and (ii), the density statement can not come by Baire type arguments. Indeed:
        \begin{enumerate}%[label=(\roman*)]
            \item concerning Point (i), by Theorem \ref{thm:uniq-resid}, $\U_q \cap \Y$ is residual in $\Y$ (endowed with the norm $\|u \|_{L^1_t(L^q \cap W^{1,p})_x}$ as in the statement of Theorem \ref{thm:density}). Hence $\U_q^c \cap \Y$ is meager in $\Y$
            and thus its density can not be deduced by Baire-type arguments.
            \item Concerning Point (ii), define
                    \begin{equation*}
                        \mathcal{O} := \big\{ u \in C([0,T]  \times \T^d) \, : \, \forall x_0 \in \T^d \  \exists! \, \gamma \text{ solving \eqref{eq:ODE1} in integral sense} \big\}
                    \end{equation*}
            A result by Orlicz \cite{Orlicz32} (for which Bernard's result \cite{Bernard2008} mentioned in Section \ref{sss:intro-continuous-vf}  and our Theorem \ref{thm:uniq-resid} are the PDE counterparts) states that $\mathcal{O}$ is residual in $C([0,T] \times \T^d)$. Orlicz's result can be easily adapted to show that $\mathcal{O} \cap L^1_t W^{1,p}_x \cap \{\div u = 0\}$ is as well residual in $C([0,T] \times \T^d) \cap L^1_t W^{1,p}_x \cap \{ \div u = 0\}$, endowed with the natural norm $\|u\|_{C_{t,x} \cap L^1_t W^{1,p}_x}$. Therefore the set in \eqref{eq:statement-ode-nonuniq} is meager in  $C([0,T] \times \T^d) \cap L^1_t W^{1,p}_x \cap \{ \div u = 0\}$, and thus, as for Point (i), its density can not be deduced by Baire-type arguments.
    \end{enumerate}
\end{remark}

\begin{remark}
    As immediate corollary of Theorem \ref{thm:density}, we observe that $\U_q^c \cap \Y$ is dense also in $L^1_t L^q_x \cap \{ \div u = 0\}$, endowed with the standard $L^1_t L^q_x$ topology. 
    Similarly, the set in \eqref{eq:statement-ode-nonuniq} is dense in $C([0,T] \times \T^d) \cap \{ \div u = 0\}$, endowed with the standard uniform topology. 
\end{remark}

\begin{remark}
    Concerning Point (ii) in the statement of Theorem \ref{thm:density}, it would be natural  to consider (for $\alpha \in (0,1)$ fixed) 
        \begin{equation*}
            \tilde{\mathcal{Z}} := \{ u \in C^\alpha([0,T] \times \T^d) \cap  L^1_t  W^{1,p}_x \, : \, \div u = 0\} 
        \end{equation*}
        endowed with the natural norm $\|u \|_{C^\alpha_{t,x} \cap L^1_t W^{1,p}_x}:=\|u\|_{C^\alpha_{t,x}} + \|u \|_{L^1_t W^{1,p}_x}$ and to ask whether the set
        \begin{equation*}
        % \label{eq:statement-ode-nonuniq-alpha}
        \begin{aligned}
            \big\{ u \in \tilde{\mathcal{Z}}  \, : \, & u \in  C^\alpha_{t,x}  \text{ and $A_u$ has full measure} \}       
        \end{aligned}
        \end{equation*}
        is dense in $\tilde{\mathcal{Z}}$. Our proof however does not apply to this case, the main reason being that smooth maps are not dense in $C^\alpha_{t,x}$, see Remark \ref{rmk:holder-not-work}.
\end{remark}

The paper is organized as follows. In Section \ref{s:push-forward} a preliminary proposition concerning the composition of flow maps is proven. Such proposition will be used in both the proofs of Theorem \ref{thm:existence-lq} and Theorem \ref{thm:density}. Theorem \ref{thm:existence-lq} (residuality of non-existence) is proven in Section \ref{s:non-existence}. Theorem \ref{thm:uniq-resid} (residuality of uniqueness) is proven in Section \ref{s:residuality_uniqueness}. Theorem \ref{thm:density}, Point (i) (density of PDE counterexamples) is proven in Section \ref{s:density-pde}, whereas Theorem \ref{thm:density}, Point (ii) (density of ODE counterexamples) is proven in Section \ref{s:density-ode}.

\bigskip

\section{Composition of flow maps}
\label{s:push-forward}

In this section we observe that the composition (as functions of space) of two flow maps corresponding to two given vector fields $u,v$ is the flow map of a vector field $w$ which can be computed explicitly in terms of $u,v$. A similar result holds for solutions to the continuity equation. 

Consider a smooth vector field $u\in C^{\infty}([0,T]\times \T^d;\R^d)$, $T>0$. Denote by $X^u :[0,T]\times \T^d\to \T^d$ the associated flow map
\begin{equation}
\label{eq:flow-map}
    \left\{
    \begin{aligned}
    \partial_t X^u(t,x) & = u(t, X^u(t,x)), \\
    X^u(0,x) & = x,
    \end{aligned}
    \right.
\end{equation}
and by $\Phi^u : [0,T] \times \T^d \to \T^d$ the corresponding inverse flow map
\begin{equation*}
\Phi^u(t,X^u(t,x)) = x, \quad X^u(t, \Phi^u(t,y)) = y. 
\end{equation*}
Assume now that $v\in L^1([0,T];L^q(\T^d;\R^d))$, $q \in [1, \infty]$, is another vector field (not necessarily smooth). Define  
$w:[0,T]\times \T^d \to \R^d$ as
    \begin{equation}
    \label{eq:push_forward}
        w(t,x):=u(t,x)+\left(\nabla \Phi^u(t,x)\right)^{-1}v(t,\Phi^u(t,x)).
    \end{equation}
Observe that $w \in L^1_t L^q_x$. 
\begin{remark}
    \label{rmk:div-u-equal-div-w}
    If $u$ is incompressible, then $\Phi^u$ is a measure preserving diffeomorphism and thus it is not difficult to see (by testing with smooth functions and integrating by parts) that
    $$\div \Big( \left(\nabla \Phi^u(t,x)\right)^{-1}v(t,\Phi^u(t,x)) \Big) = (\div v )(t, \Phi^u(t,x)).$$ In particular, $\div w = 0$ if  $\div v = 0$.
\end{remark}

\begin{lemma}\label{lemma:composition_lemma}
    Let $u,v,w$ be as before. 
    The following hold.
    \begin{enumerate}[label=(\roman*)]
        \item Let $\rho\in L^\infty([0,T];L^{q'}(\T^d;\R))$ be a distributional solution to
        \begin{equation*}
            \left\{
            \begin{aligned}
                \partial_t \rho + \div (v \rho) & = 0, \\
                \rho|_{t=0} = \bar\rho
            \end{aligned}
            \right.
        \end{equation*}
        for some $\bar\rho\in L^{q'}(\T^d)$.
        Define $\tilde \rho\in L^\infty([0,T];L^{q'}(\T^d;\R))$ by
        \begin{equation*}
            \tilde \rho(t, \cdot)= X^u(t, \cdot)_\sharp \rho(t, \cdot) 
        \end{equation*}
        Then
         \begin{equation*}
            \left\{
            \begin{aligned}
                \partial_t \tilde \rho + \div (w \tilde  \rho) & = 0, \\
                \tilde \rho|_{t=0} = \bar\rho.
            \end{aligned}
            \right.
        \end{equation*}
        
    \item Assume now, in addition, that  $v\in C([0,T]\times \T^d)$. Let $\gamma^v:[0,T]\to\T^d$ be a solution to the ODE
    \begin{equation*}
        \left\{
        \begin{aligned}
            (\gamma^v)'(t) & = v(t, \gamma^v(t)), \\
            \gamma^v(0) & = x,
        \end{aligned}
        \right.
    \end{equation*}
    for some $x \in \T^d$. 
    Then $\gamma^w:[0,T]\to \T^d$ defined by 
    \begin{equation*}
        \gamma^w(t):=X^u(t,\gamma^v(t))
    \end{equation*}
    solves
    \begin{equation*}
        \left\{
        \begin{aligned}
            (\gamma^w)'(t) & = w(t, \gamma^w(t)), \\
            \gamma^w(0) & = x.
        \end{aligned}
        \right.
    \end{equation*}    
    \end{enumerate}
\end{lemma}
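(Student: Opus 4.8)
The plan is to deduce both statements for $w$ from the corresponding ones for $v$ by performing the smooth, invertible change of variables $y = X^u(t,x)$, the common engine being the matrix identity
\begin{equation*}
\nabla_x X^u(t,x) = \big(\nabla\Phi^u(t,X^u(t,x))\big)^{-1},
\end{equation*}
obtained by differentiating $\Phi^u(t,X^u(t,x)) = x$ in $x$, together with the flow equation $\partial_t X^u(t,x) = u(t,X^u(t,x))$ from \eqref{eq:flow-map}. Throughout one uses that $u$ is smooth on the compact set $[0,T]\times\T^d$, so $X^u$ and $\Phi^u$ are smooth with Jacobians bounded above and below, uniformly in $t$.

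For Part (i) I would argue directly on the weak formulation \eqref{eq:def-weak-sln} (with $\tau=T$). Given a test function $\psi\in C^1_c([0,T)\times\T^d)$ for the $w$-equation, set $\varphi(t,x):=\psi(t,X^u(t,x))$; by smoothness of $X^u$ and compactness of the time support of $\psi$, one has $\varphi\in C^1_c([0,T)\times\T^d)$ and $\varphi(0,\cdot)=\psi(0,\cdot)$. The key step is the pointwise identity
\begin{equation*}
(\partial_t\varphi + v\cdot\nabla\varphi)(t,x) = (\partial_t\psi + w\cdot\nabla\psi)\big(t, X^u(t,x)\big),
\end{equation*}
which follows from the chain rule: $\partial_t\varphi = (\partial_t\psi)(t,X^u) + (\nabla\psi)(t,X^u)\cdot\partial_t X^u$ and $\nabla_x\varphi = (\nabla_x X^u)^T(\nabla\psi)(t,X^u)$, hence $v\cdot\nabla_x\varphi = \big(\nabla_x X^u\, v\big)\cdot(\nabla\psi)(t,X^u)$; substituting $\partial_t X^u = u(t,X^u)$ and using the matrix identity one recognizes $u(t,X^u) + \nabla_x X^u\, v(t,x) = w(t,X^u(t,x))$ by the definition \eqref{eq:push_forward} of $w$. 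Integrating this identity against $\rho(t,x)\,dx\,dt$ and changing variables $y = X^u(t,x)$ slice-wise in $x$ — the Jacobian factor being exactly what turns $\rho$ into $\tilde\rho = X^u(t,\cdot)_\sharp\rho(t,\cdot)$, i.e. $\tilde\rho(t,y) = \rho(t,\Phi^u(t,y))\,|\det\nabla\Phi^u(t,y)|$ — yields
\begin{equation*}
\int_0^{T}\int_{\T^d}(\partial_t\varphi + v\cdot\nabla\varphi)\,\rho\,dx\,dt = \int_0^{T}\int_{\T^d}(\partial_t\psi + w\cdot\nabla\psi)\,\tilde\rho\,dy\,dt.
\end{equation*}
Since $\tilde\rho(0,\cdot) = X^u(0,\cdot)_\sharp\bar\rho = \bar\rho$ and $\varphi(0,\cdot)=\psi(0,\cdot)$, the weak formulation for $\rho$ tested against $\varphi$ becomes verbatim the weak formulation for $\tilde\rho$ tested against $\psi$; as $\psi$ is arbitrary, this is the claim. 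One also records that $\tilde\rho\in L^\infty_t L^{q'}_x$ (two-sided Jacobian bounds) and $w\tilde\rho\in L^1_t L^1_x$ since $w\in L^1_t L^q_x$, so all the integrals converge.

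For Part (ii) the computation is classical: since $v\in C([0,T]\times\T^d)$ we have $\gamma^v\in C^1$, and $\gamma^w(t) = X^u(t,\gamma^v(t))$ is $C^1$ as well; differentiating,
\begin{equation*}
(\gamma^w)'(t) = (\partial_t X^u)(t,\gamma^v(t)) + \nabla_x X^u(t,\gamma^v(t))\,(\gamma^v)'(t) = u(t,\gamma^w(t)) + \nabla_x X^u(t,\gamma^v(t))\,v(t,\gamma^v(t)).
\end{equation*}
Using $\Phi^u(t,\gamma^w(t)) = \gamma^v(t)$ and the matrix identity $\nabla_x X^u(t,\gamma^v(t)) = \big(\nabla\Phi^u(t,\gamma^w(t))\big)^{-1}$, the right-hand side equals $u(t,\gamma^w(t)) + \big(\nabla\Phi^u(t,\gamma^w(t))\big)^{-1} v(t,\Phi^u(t,\gamma^w(t))) = w(t,\gamma^w(t))$, while $\gamma^w(0) = X^u(0,x) = x$.

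I do not expect a serious obstacle: smoothness of $u$ makes all the differentiation and the change of variables legitimate, and the heart of the matter is the single linear-algebra identity relating $\nabla X^u$ and $\nabla\Phi^u$, after which (i) and (ii) are the same computation in a distributional and in a pointwise guise. The only points requiring a little care are the admissibility of $\varphi = \psi\circ X^u$ as a test function (compact time support inherited from $\psi$, regularity from that of $X^u$) and the Fubini/change-of-variables justification of the slice-wise substitution in Part (i); both are routine given the uniform bounds on $X^u$, $\Phi^u$ and their Jacobians.
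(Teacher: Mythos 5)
Your proposal is correct and follows essentially the same route as the paper: both hinge on the identity $\nabla X^u(t,x)=\bigl(\nabla\Phi^u(t,X^u(t,x))\bigr)^{-1}$, the composed test function $\psi\circ X^u$ together with the change of variables $y=X^u(t,x)$ realizing the pushforward for Part (i), and the direct chain-rule computation for Part (ii). The only difference is cosmetic (you run the test-function substitution from the $v$-side forward rather than from the $w$-side backward, and you swap the names $\varphi$, $\psi$).
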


\begin{remark}
        In particular, if also $v \in C^\infty([0,T] \times \T^d)$ and $X^v:[0,T] \times \T^d \to \T^d$ is the corresponding flow map, then also $w \in C^\infty([0,T] \times \T^d)$ and its flow map $X^w$ satisfies
    \begin{equation*}
        X^w(t,x)=X^u(t,X^v(t,x)) \qquad\forall x\in\T^d,\,\forall t\in[0,T].
    \end{equation*}

\end{remark}

%%%%%%%%%%%%%%%%%%%%%%%%%%%%%%%%%%%%%

\begin{proof}
\underline{Proof of (i)}. We check that $\tilde \rho$ solves the Cauchy problem (with velocity field $w$) in the sense of distributions. Using the identity 
\begin{equation}
    \label{eq:inverse-matrix}
    (\nabla \Phi^u(t,X^u(t,x)))^{-1} = \nabla X^u(t,x)
\end{equation}
for all $t,x$, it is not difficult to see that 
\begin{equation}
\label{eq:w-new-coordinate}
    w(t, X^u(t,x)) = u(t, X^u(t,x)) + \nabla X^u(t,x) v(t,x)
\end{equation}
for a.e. $t,x$. Let now $\varphi \in C^\infty_c([0,T) \times \T^d)$ be a test function. Using the definition of $\tilde \rho$, we have
\begin{equation*}
    \begin{aligned}
        \iint \big( & \partial_t \varphi (t,x) + \nabla \varphi(t,x) \cdot w(t,x) \big) \tilde \rho(t,x) dxdt \\  
        & = \iint \big( \partial_t \varphi (t,x) + \nabla \varphi(t,x) \cdot w(t,x) \big)  X^u(t)_\sharp \rho(t) (dx) dt \\
        & = \iint \big( \partial_t \varphi (t,X^u(t,x)) + \nabla \varphi(t,X^u(t,x)) \cdot w(t,X^u(t,x)) \big) \rho(t,x) dx dt \\
        & \text{(by \eqref{eq:w-new-coordinate})} \\
        & = \iint \Big[ \underbrace{\partial_t \varphi(t,X^u(t,x)) + \nabla \varphi(t,X^u(t,x)) \cdot u(t,X^u(t,x))}_{= \partial_t \psi(t,x)} \\
        & \qquad \qquad\qquad \qquad \qquad\qquad +  \underbrace{\nabla \varphi(t,X^u(t,x)) \nabla X^u(t,x)}_{= \nabla \psi(t,x)} v(t,x) \Big] \rho(t,x) dxdt \\
        & \text{(setting $\psi(t,x) := \varphi(t,X^u(t,x))$)} \\
        & = \iint \big( \partial_t \psi(t,x) + \nabla \psi(t,x) \cdot v(t,x) \big) \rho(t,x) dxdt \\
        & = - \int \psi(0,x) \bar\rho(x) dx = - \int \varphi(0,x) \bar\rho(x) dx,
    \end{aligned}
\end{equation*}
where in the second last equality we used that $\rho$ is a distributional solution to  the continuity equation with vector field $v$ and $\psi \in C^\infty([0,T) \times \T^d)$ is an admissible test function (recall that $u$ is smooth and so is $X^u$), and in the last equality we used that $\psi(0,x) = \varphi(0, X^u(0,x)) = \varphi(0,x)$. 

    \medskip
    
    \underline{Proof of (ii).}
    % \red{controllare gli apici $v,w$}
    For any time $t$ we can explicitly compute the ODE satisfied by $\gamma^w.$
    \begin{equation*}
        \begin{aligned}
            \frac{d}{dt}\gamma^w(t)&=\frac{d}{dt}X^u(t,\gamma^v(t))=\partial_tX^u(t,\gamma^v(t))+\nabla X^u(t,\gamma^v(t))\frac{d}{dt}\gamma^v(t)\\
            &=u(t,\gamma^w(t)) + \nabla X^u(t,\gamma^v(t))v(t,\gamma^v(t)).
        \end{aligned}
    \end{equation*}
Next we use the identity \eqref{eq:inverse-matrix} to obtain 
 \begin{equation*}
        \begin{aligned}
            \frac{d}{dt}\gamma^w(t)=u(t,\gamma^w(t)) + \left(\nabla \Phi^u(t,\gamma^w(t))\right)^{-1}v(t,\Phi^u(t,\gamma^w(t))) = w(t, \gamma^w(t)),
        \end{aligned}
    \end{equation*}
    which concludes the proof.
\end{proof}

%%%%%%%%%%%%%%%%%%%%%%%%%%%%%%%%%%%%%%%%%%%%%%%%%%%%%%%%%%%%%%%%%%

\section{Residuality of non-existence}\label{s:non-existence}

In this section we prove Theorem \ref{thm:existence-lq}. We first state and prove in Section \ref{ss:ex:prelim} a preliminary standard lemma concerning the weak* modulus of continuity of distributional solutions to \eqref{eq:PDE}.  Then in Section \ref{ss:thm-ex} we state a \emph{Main Proposition}, namely Proposition \ref{prop:main_proposition_existence}, and we prove Theorem \ref{thm:existence-lq}, assuming the Main Proposition. Finally in Section \ref{sec:main_prop_existence} we prove the Main Proposition. 

\subsection{A preliminary lemma}
\label{ss:ex:prelim}

For every $u\in L^{1}([0,T];L^{q}(\T^d))$ and every local weak solution $\rho\in L^{\infty}_{\rm loc}([0,\tau);L^{q'}(\T^d))$ to \eqref{eq:PDE},  
it is a well known fact (see for example \cite[Theorem 4.3.1]{DafermosBook}) that, up to a modification on a negligible set of times, the density $\rho$ can be assumed 
weakly* continuous w.r.t time, i.e. 
\begin{equation*}
    \lim_{s\to t}\int_{\T^d}\rho(s,x)g(x)\,dx=\int_{\T^d}\rho(t,x)g(x)\,dx,\qquad \forall g \in L^{q}, \,\,\forall t\in[0,\tau).
\end{equation*}
We will then often implicitly assume that  $\rho$ is weakly* continuous in $L^{q'}$, and we will write $\rho\in C_{w}([0,\tau);L^{q'}(\T^d))$.

In fact, weakly* continuous solutions have a modulus of continuity in time w.r.t an appropriate distance. 
This simple and standard fact is going to be useful in the compactness argument in the proofs of Theorem \ref{thm:existence-lq} and \ref{thm:uniq-resid}. We include a proof for the sake of completeness.

\begin{lemma}\label{lemma:equicont_solutions}
Let $q \in [1, \infty)$ and $M > 0$.  Consider 
    \begin{equation*}
        \overline{B}_M^{q'} := \{f\in L^{q'}(\T^d)\,:\,\norm{f}_{L^{q'}(\T^d)}\le M\}
    \end{equation*}
    endowed with the weak* topology of $L^{q'}$. There exists a distance $d_M$ on $\overline{B}_M^{q'}$ with the following properties:
    \begin{enumerate}[label=(\roman*)]
        \item $d_M$ induces the weak* topology on $\overline{B}_M^{q'}$,
        \item  for every $u \in L^1([0,T]; L^q(\T^d;\R^d))$, for every $\tau \in (0,T]$ and every local solution $\rho \in C_w([0,\tau]; L^{q'}(\T^d))$ solution to \eqref{eq:PDE} such that $\rho(t)\in \overline{B}^{q'}_M$ for all $t\in [0,\tau]$, the map $t \mapsto \rho(t)$ is absolutely continuous w.r.t. $d_M$. More precisely
        \begin{equation*}
            d_M(\rho(t), \rho(s)) \leq M \int_{s}^t  \|u(z, \cdot)\|_{L^q} \, dz, \qquad \text{ for every $\tau\ge t\ge s\ge 0.$}
        \end{equation*}
    \end{enumerate}
\end{lemma}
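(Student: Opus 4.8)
The plan is to construct $d_M$ explicitly as a weighted sum of suitably normalized ``test functionals''. Since $q<\infty$, $L^q(\T^d)$ is separable and $C^1(\T^d)$ is dense in it, so I would fix a countable family $\{h_k\}_{k\ge 1}\subseteq C^1(\T^d)$ dense in $L^q(\T^d)$, and for each $k$ pick $c_k>0$ small enough that $g_k:=c_k h_k$ satisfies $\|\nabla g_k\|_{L^\infty}\le 1$ and $\|g_k\|_{L^q}\le 1$ (possible since $h_k\in C^1$). Then set
\[
d_M(f,h):=\sum_{k\ge 1}2^{-k}\Big|\int_{\T^d}(f-h)\,g_k\,dx\Big|,\qquad f,h\in\overline{B}_M^{q'}.
\]
On $\overline{B}_M^{q'}$ each summand is bounded by $2^{-k}\cdot 2M$, so the series converges; symmetry and the triangle inequality are inherited termwise; and $d_M(f,h)=0$ forces $\int(f-h)h_k=0$ for all $k$, hence $\int(f-h)g=0$ for all $g\in L^q$ by density, hence $f=h$ (recall $f-h\in L^{q'}=(L^q)^*$). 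Thus $d_M$ is a distance.

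For property (i), I would argue both implications. If $f_n\to f$ weakly$^*$ in $\overline{B}_M^{q'}$, each summand $2^{-k}|\int(f_n-f)g_k|\to 0$ while being dominated by $2^{-k}\cdot 2M$, so $d_M(f_n,f)\to 0$ by dominated convergence for series. Conversely, if $d_M(f_n,f)\to 0$ then $\int f_n g_k\to\int f g_k$, equivalently $\int f_n h_k\to\int f h_k$, for every $k$; since $\{h_k\}$ is dense in $L^q$ and $\sup_n\|f_n\|_{L^{q'}}\le M$, a standard $\varepsilon/3$ estimate gives $\int f_n g\to\int f g$ for every $g\in L^q$, i.e.\ $f_n\to f$ weakly$^*$. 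As the weak$^*$ topology on $\overline{B}_M^{q'}$ is metrizable ($L^q$ separable), and $d_M$ is a metric with the same convergent sequences, the two topologies coincide (a metric topology is determined by its convergent sequences); hence $d_M$ metrizes the weak$^*$ topology on $\overline{B}_M^{q'}$.

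For property (ii), fix $u$, $\tau$, and a local solution $\rho\in C_w([0,\tau];L^{q'})$ with $\rho(t)\in\overline{B}_M^{q'}$ for all $t$. Testing \eqref{eq:def-weak-sln} against product functions $\chi(t)g_k(x)$ with $\chi\in C^1_c([0,\tau))$, and using the weak$^*$ continuity of $\rho$ to pass from the distributional-in-time relation to a pointwise one, yields the time-integrated identity
\[
\int_{\T^d}\rho(t)g_k-\int_{\T^d}\rho(s)g_k=\int_s^t\int_{\T^d}\rho(z,x)\,u(z,x)\cdot\nabla g_k(x)\,dx\,dz,\qquad 0\le s\le t\le\tau.
\]
Since $\|\rho(z)\|_{L^{q'}}\le M$ and $\|\nabla g_k\|_{L^\infty}\le 1$, Hölder's inequality gives $\big|\int(\rho(t)-\rho(s))g_k\big|\le M\int_s^t\|u(z)\|_{L^q}\,dz$ for every $k$; multiplying by $2^{-k}$, summing, and using $\sum_{k\ge 1}2^{-k}=1$, we obtain
\[
d_M(\rho(t),\rho(s))\le M\int_s^t\|u(z)\|_{L^q}\,dz.
\]
Since $z\mapsto\|u(z)\|_{L^q}\in L^1([0,T])$, absolute continuity of the Lebesgue integral converts this bound into absolute continuity of $t\mapsto\rho(t)$ with respect to $d_M$.

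The only genuinely non-routine point is the time-integrated identity for $t\mapsto\int\rho(t)g_k$: the weak formulation only yields $\tfrac{d}{dt}\langle\rho,g_k\rangle=\langle\rho u,\nabla g_k\rangle$ as an identity of distributions in time (with ``initial value'' $\langle\bar\rho,g_k\rangle$), and one must upgrade it to an equality for all $t$ using the assumed weak$^*$ continuity of $\rho$ --- essentially the content of the \cite[Theorem 4.3.1]{DafermosBook}-type fact invoked above. Everything else is bookkeeping.
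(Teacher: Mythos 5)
Your proposal is correct and follows essentially the same route as the paper: a weighted sum of pairings against a countable dense family of $C^1$ test functions, normalized so that the gradient sup-norms are controlled (you rescale the functions, the paper divides by $\|\nabla\varphi_j\|_{L^\infty}$ — the same device), with the estimate in (ii) obtained exactly as you describe from the weak formulation, Hölder, and summing the geometric series.
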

%%%%%%%%%%%%%%%%%%%%%%%%%%%%%%%%%%%%%%%%%%%%%%%%%%%%%%%%%%%%%%%%%%%%%%%%%%%%%%%%%%%%%%%%%%%%%%%%%%
\begin{proof}
    Let $ \{\varphi_j\}_{j\in\N}$ be a countable family of smooth maps, dense in $\bar{B}_1^{q}$ (w.r.t. the strong topology of $L^q$) with the additional property that 
    \begin{equation}
    \label{eq:bound-lipschitz}
        \| \nabla \varphi_j\|_{L^\infty} \geq 1. 
    \end{equation}
    The metric $d_M$ is defined in the standard way
	\begin{equation*}
        d_M(g,h):=\sum_{j\in\N}\frac{2^{-j}}{\norm{\nabla\varphi_j}_{L^{\infty}}}\left|\int_{\T^d}(g(x)-h(x)){\varphi}_j(x)\,dx\right|
	\end{equation*}
    for any $g,h\in \bar{B}_M^{q'}$, and it induces the weak* topology on $\bar B^{q'}_M$.  Notice that the bound \eqref{eq:bound-lipschitz} ensures that the series converges.
    Let now $u, \rho$ be as in Point (ii) of the statements. For $\tau\ge t\ge s\ge 0$, we have
    \begin{equation*}
        \begin{aligned}
            d_{M}(\rho(t),\rho(s))&=\sum_{j\in\N}\frac{2^{-j}}{\norm{\nabla{\varphi}_j}_{L^{\infty}}}\left|\int_{\T^d}(\rho(t,x)-\rho(s,x)){\varphi}_j(x)\,dx\right|\\
            &=\sum_{j\in\N}\frac{2^{-j}}{\norm{\nabla{\varphi}_j}_{L^{\infty}}}\left|\int_{s}^{t}\int_{\T^d}\rho(z,x)u(z,x)\cdot\nabla{\varphi}_j(x)\,dx\right|\\
            &\le \sum_{j\in\N} 2^{-j}M \int_{s}^t  \|u(z, \cdot)\|_{L^q} \, d z= M \int_{s}^t  \|u(z, \cdot)\|_{L^q} \, d z,
        \end{aligned}
    \end{equation*}
    where in the second line we have used the fact that $\rho$ is weakly* continuous in time and it is a distributional solution to \eqref{eq:PDE}. 
\end{proof}

\subsection{Main Proposition and proof of Theorem \ref{thm:existence-lq}}
\label{ss:thm-ex}

The proof of Theorem \ref{thm:existence-lq} consists on showing that the set $\E_q$ is contained into a countable union of closed sets with empty interior. 
The crucial step of the proof  is the one proving the statement about the empty interior of each of those sets.
This part will rely on the construction of smooth vector fields $w$,  arbitrarily close to a given field $u$, having the property of ``compressing the available mass'', thus leading to a sort of \emph{norm inflation} in solutions to the associated continuity equation, in an arbitrarily small amount of time. 
The construction can be summarized as follows.

\begin{proposition}[Main Proposition]\label{prop:main_proposition_existence}
    Let $u\in C^{\infty}([0,T]\times\T^d;\R^d)$. For every $\e>0$, $\delta>0$, $\tau>0$, $M>1$ and $\varphi\in C^\infty(\T^d)$, there exists a vector field $w\in C^{\infty}([0,T]\times\T^d;\R^d)$ such that 
    \begin{equation*}
        \norm{u-w}_{L^1_tL^q_x}\le  \epsilon
    \end{equation*}
    and having the following property.
    For every $\bar\rho \in L^{q'}(\T^d)$ such that
    $$\left| \int_{\T^d}\varphi(x)\bar\rho(x)\,dx \right|\ge \delta,$$
    the unique solution $\rho\in C_w([0,+\infty);L^{q'}(\T^d))$ to the Cauchy problem
    \begin{equation*}
       \left\{\begin{aligned}
            &\partial_t\rho + \div (\rho w)=0,\\
            &\rho_{t=0}=\bar\rho
       \end{aligned}\right.
    \end{equation*}
    satisfies
    \begin{equation}
    \label{eq:norm_inflation}
        \sup_{s\in[0,\tau]}\norm{\rho(s)}_{L^{q'}}> M.
    \end{equation}
\end{proposition}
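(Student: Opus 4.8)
\textit{Step 1: reduction to $u\equiv 0$.} The plan is to dispose of the given smooth field $u$ via Lemma~\ref{lemma:composition_lemma}. Suppose we have already constructed, for the trivial field, a $v\in C^\infty([0,T]\times\T^d;\R^d)$, compactly supported in time in $[0,\tau]$, with $\|v\|_{L^1_tL^q_x}$ as small as we wish and with the norm inflation property for the continuity equation driven by $v$ itself, with a suitably enlarged threshold $M'>1$. Set $w:=u+(\nabla\Phi^u)^{-1}v(\cdot,\Phi^u)$ as in \eqref{eq:push_forward}; then $w$ is smooth. Since $u$ is smooth on the compact set $[0,T]\times\T^d$, the Jacobians of $X^u$ and $\Phi^u$ are bounded above and below uniformly in $(t,x)$, so a change of variables (using \eqref{eq:inverse-matrix}) gives $\|w-u\|_{L^1_tL^q_x}\le C(u)\|v\|_{L^1_tL^q_x}$ and $\|X^u(t)_\sharp f\|_{L^{q'}}\ge c(u)\|f\|_{L^{q'}}$ for all $f\in L^{q'}$ and all $t$. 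By Lemma~\ref{lemma:composition_lemma}(i), if $\rho$ solves the continuity equation driven by $v$ with datum $\bar\rho$, then $\tilde\rho(t):=X^u(t)_\sharp\rho(t)$ solves the one driven by $w$ with the same datum, hence is the solution in the statement. Thus $\sup_{s\le\tau}\|\tilde\rho(s)\|_{L^{q'}}\ge c(u)\sup_{s\le\tau}\|\rho(s)\|_{L^{q'}}>c(u)M'$, and choosing $M':=\max\{2,M/c(u)\}$ and $\|v\|_{L^1_tL^q_x}\le\epsilon/C(u)$ yields both conclusions. From now on $u\equiv0$ and $w=v$.

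\textit{Step 2: the inflation mechanism.} We may assume $\|\bar\rho\|_{L^{q'}}\le M$ (otherwise the inflation already holds at $s=0$); then $|\int_{\T^d}\varphi\bar\rho|\ge\delta$ and boundedness of $\varphi$ force $\|\bar\rho\|_{L^1}\ge\delta/\|\varphi\|_{L^\infty}=:c_1>0$. Since $v$ is smooth the solution is the push-forward $\rho(s)=X^v(s)_\sharp\bar\rho$, and the change of variables $y=X^v(s,x)$ gives
\begin{equation*}
\|\rho(s)\|_{L^{q'}}^{q'}=\int_{\T^d}|\bar\rho(x)|^{q'}\,J(s,x)^{1-q'}\,dx,\qquad J(s,x):=|\det\nabla_xX^v(s,x)|,
\end{equation*}
which is large exactly where the flow compresses (where $J$ is small). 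The aim is to build $v$ so that, over $[0,\tau]$, every thin slab $S_j:=[j\ell,(j+1)\ell)\times\T^{d-1}$ ($j=0,\dots,\ell^{-1}-1$, with $\ell^{-1}\in\N$) gets, at some time $s_j\le\tau$, compressed to $J(s_j,\cdot)\le C_0\eta$ on $S_j$, for a small $\eta>0$ and a dimensional constant $C_0$. Granting this: since the $S_j$ partition $\T^d$, pigeonhole gives $j^*$ with $\|\bar\rho\|_{L^1(S_{j^*})}\ge c_1\ell$, and then, estimating $\int_{S_{j^*}}|\bar\rho|^{q'}\ge\ell^{1-q'}\|\bar\rho\|_{L^1(S_{j^*})}^{q'}$ by Hölder,
\begin{equation*}
\|\rho(s_{j^*})\|_{L^{q'}}^{q'}\ge(C_0\eta)^{1-q'}\ell^{1-q'}(c_1\ell)^{q'}=(C_0\eta)^{1-q'}c_1^{q'}\ell,
\end{equation*}
which exceeds $M^{q'}$ as soon as $\eta<(2C_0)^{-1}(c_1/M)^{q'/(q'-1)}\ell^{1/(q'-1)}$; we take this as the definition of $\eta$. (For $q=1$, $q'=\infty$, one argues instead with $\|\rho(s_{j^*})\|_{L^\infty}\ge\|\bar\rho\|_{L^\infty(S_{j^*})}/(C_0\eta)\ge c_1/(C_0\eta)$, and $\eta$ may be taken to be a fixed constant.)

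\textit{Step 3: construction and budget.} The field $v$ is built in two rounds. On $[0,\tau/2]$ we run simultaneously, over the even-indexed slabs, autonomous fields of the form $-b\,\psi\big((x_1-c_j)/\ell\big)e_1$ — a smoothed, localized version of the one-dimensional field contracting towards $\{x_1=c_j\}$, $c_j$ being the midpoint of $S_j$ — with the volume freed by the compression of each $S_j$ absorbed by an $O(\ell)$-wide expansion of its two neighbouring odd slabs, so that the time-$s$ map is a diffeomorphism of $\T^d$ for every $s$; with $b=2\log(1/\eta)/\tau$ this gives $J(\tau/2,\cdot)\le C_0\eta$ on every even slab. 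On $[\tau/2,\tau]$ the second round does the same over the odd slabs (and lets the even ones relax); since only $\sup_s$ is needed it is harmless that each round partly undoes the other, provided one tracks that the composite Jacobian on $S_{j^*}$ at $s_{j^*}$ is still $\le C_0\eta$ up to the bounded expansion incurred in the earlier round (absorb this into $C_0$). For the budget: the fields in one round have pairwise disjoint supports, each of $L^q$-norm $\le Cb\ell^{1+1/q}$ and there are $\le\ell^{-1}$ of them, so the round contributes $\|v(s,\cdot)\|_{L^q}\le C(\ell^{-1}(b\ell^{1+1/q})^q)^{1/q}=Cb\ell$, whence $\|v\|_{L^1_tL^q_x}\le C\tau b\ell\le C\ell\log(1/\eta)$. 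Since $\eta$ is of order $\ell^{1/(q'-1)}$, this is of order $\ell\log(1/\ell)\to0$ as $\ell\to0$, so $\ell$ can be chosen (depending only on $\epsilon,\delta,\tau,M,\|\varphi\|_{L^\infty},d,q$) small enough that $\|v\|_{L^1_tL^q_x}<\epsilon$. Finally smooth the transition at $s=\tau/2$ and set $v\equiv0$ for $s\notin[0,\tau]$.

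\textit{Main obstacle.} The crux is the quantitative balance in Step~3: the pigeonhole of Step~2 costs a factor $\ell$ in the available mass, forcing $\eta$ to be polynomially small in $\ell$, while the number of compression moves grows like $\ell^{-1}$. The construction survives only because a codimension-one (slab) compression is cheap — cost of order $b\ell^{1+1/q}$ per move, with $\ell^{1+1/q}\ll\ell$ — whereas a genuinely $d$-dimensional, ball-type compression, with cost of order $\ell^{1+d/q}$ per move, would be too expensive to iterate $\ell^{-1}$ times once $q'>d$; combined with the merely logarithmic dependence of the cost on $\eta$, this makes $\ell^{-1}\cdot\ell^{1+1/q}\log(1/\eta)\to0$. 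The remaining technical work is to make the compressing fields genuinely $C^\infty$ (in space and, at $s=\tau/2$, in time) and to verify that the composite map stays a diffeomorphism of $\T^d$ throughout each round, so that the push-forward identity of Step~2 is legitimately available.
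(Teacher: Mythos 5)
Your overall architecture is sound and genuinely different from the paper's: Step 1 (reduction to $u\equiv 0$ via Lemma~\ref{lemma:composition_lemma} and the boundedness of $J^{X^u}$ above and below) matches the paper, but your inflation mechanism — extract only $\|\bar\rho\|_{L^1}\ge \delta/\|\varphi\|_{L^\infty}$ from the hypothesis, pigeonhole over slabs, and apply H\"older on the selected slab — replaces the paper's mechanism, which keeps the test function $\varphi$ in play to the end. However, there is a genuine gap at the central claim of Step 3, namely that a smooth field can achieve $J(s_j,\cdot)\le C_0\eta$ \emph{on all of} $S_j$. For any smooth field that contracts the even slabs and expands the odd ones, the interfaces between them carry (essentially invariant) transition layers on which $\div v\ge 0$, and points starting in such a layer have Jacobian bounded below away from $\eta$; so each slab contains an exceptional set of measure comparable to the transition width. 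Since you have no control on where $\bar\rho$ concentrates, the pigeonhole slab $S_{j^*}$ could carry almost all of its mass in that exceptional layer, and the lower bound $\|\rho(s_{j^*})\|_{L^{q'}}^{q'}\ge(C_0\eta)^{1-q'}c_1^{q'}\ell$ collapses. This exceptional set is exactly what the paper's proof is organized around: Lemma~\ref{l:auton_bb} produces a field for which $|\{J\ge\eta\}|\le\eta$, and the final estimate \eqref{eq:jacob_rescaled_main_cpt} splits $\int\varphi\bar\rho$ into the contribution of this bad set (controlled by $M\|\varphi\|_{L^q(A_{\eta,\tau,\lambda})}\to0$ via absolute continuity of the integral) and of its complement (controlled by the change of variables). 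Your scheme is repairable — take transition layers of width $\theta\ell$ with $\theta$ small depending on $\delta$, $M$, $\|\varphi\|_{L^\infty}$, $q$ (this costs nothing in the $L^1_tL^q_x$ budget, since sharpening the transition inflates $\nabla v$ but not $v$), bound the mass of $\bar\rho$ on the union of the layers by $M(C\theta)^{1/q}$, and pigeonhole the remaining mass — but this repair is the heart of the proof and cannot be omitted.

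Two secondary points. First, the two-round bookkeeping is asserted rather than verified: round 2 must compress the round-1 \emph{images} of the odd slabs (slabs of width $\approx2\ell$ straddling the original ones), not the original $S_j$, since the composite Jacobian at $x$ multiplies the round-2 Jacobian evaluated at $X(\tau/2,x)$. Second, the claim in your closing paragraph that a genuinely $d$-dimensional, ball-type compression would be too expensive is not correct: the paper does precisely that, compressing all $\lambda^d$ cubes of side $1/\lambda$ simultaneously with a field of size $\|v_{\eta,\tau,\lambda}\|_{L^\infty}=1/(\tau\lambda)\to0$; the cost per cell scales like $\lambda^{-1-d/q}$ and there are $\lambda^d$ disjoint cells, so the total $L^q$ norm is still $O(\lambda^{-1})$. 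What makes your accounting look tight is not the dimension of the compression but the pigeonhole, which forces $\eta$ to be polynomially small in $\ell$; the paper avoids this entirely by choosing $\eta$ (via $\varphi$ and absolute continuity) independently of the oscillation parameter $\lambda$.
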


\medskip

We postpone to Section \ref{sec:main_prop_existence} the proof of Proposition \ref{prop:main_proposition_existence} and we prove now Theorem \ref{thm:existence-lq}.

\begin{proof}[Proof of Theorem \ref{thm:existence-lq} assuming Proposition \ref{prop:main_proposition_existence}]

The proof is divided in three claims. First we write $\E_q$ as countable union of sets. Then we show that each of these sets is closed. Finally we show that each of these sets has empty interior. The first two claims are straightforward. The last claim follows form the main Proposition \ref{prop:main_proposition_existence}.

\medskip\textit{\underline{Claim 1}. Let $\mathcal{G}\subseteq C^{\infty}\cap L^{q}(\T^d)$ be a dense and countable set in $L^{q}(\T^d)$. The following inclusion holds: 
\begin{equation*}
    \E_q\subseteq \bigcup_{\tau\in \Q}\bigcup_{k= 1}^{\infty}\bigcup_{n=
    1}^{\infty}\bigcup_{\psi\in \mathcal{G}}\E_{\tau,k,n,\psi}
\end{equation*}
where} 
\begin{equation}\label{eq:def_iniseme_esistenza}
\begin{aligned}
    \E_{\tau,k,n,\psi}:= \big\{u\in &L^1_tL^q_x\,:\, \exists \bar \rho \in L^{q'} \text{ such that }\\
    &(i)\,\,\left|\int\psi(x)\bar\rho(x)\,dx\right|\ge \frac{1}{k},\\
    &(ii) \,\,\exists \rho\in C_{w}([0,\tau];L^{q'}(\T^d)),\,\,  \norm{\rho}_{L^{\infty}([0,\tau];L^{q'}(\T^d))}\le n,\,\,\\
     &\qquad \text{local solution to \eqref{eq:PDE} up to time $\tau$ with $\rho|_{t=0}=\bar\rho$}\big\} .
    \end{aligned}
\end{equation}

\smallskip

Let $u\in \mathcal{E}_q$. Then there exists $\bar\rho \in L^{q'}(\T^d)$, $\bar \rho \not \equiv 0$, a time $T_0\in(0,T]$ and a local solution $\rho\in L^{\infty}_{loc}([0,T_0);L^{q'}(\T^d))$.
First of all, for any $\tau <T_0$ (so in particular for $\tau$ rational), by definition $\rho\in L^{\infty}([0,\tau];L^{q'}(\T^d))$.
In particular, if we restrict on the time interval $[0,\tau]$, $\rho$ is still a local solution up to time $\tau$ and we can assume w.l.o.g. that $\rho\in C_w([0,\tau];L^{q'}(\T^d))$.

Since $\rho\in C_w([0,\tau];L^{q'}(\T^d))$, there exists $n\in \N$, depending on $\tau$, such that $\norm{\rho}_{L^{\infty}([0,\tau];L^{q'}(\T^d))}\le n$.

Finally, observe that $\bar\rho \not\equiv  0$ implies there exists $\varphi\in C^{\infty}(\T^d)$ such that $\int \varphi \bar \rho\neq 0$,
hence by density of $\mathcal{G}$ in $L^q$ we can find $\psi\in \mathcal{G}$ and $k\in \N$ such that $\left|\int\psi\bar\rho\right| \ge1/k.$

\medskip

\textit{\underline{Claim 2}. For every $\tau, k,n,\psi$, the set $\E_{\tau,k,n,\psi}$ is closed. } \label{claim2-thm-a}

\smallskip

Let $\{u_j\}_{j\in \N}\subseteq \E_{\tau,k,n,\psi}$ be such that $u_j\to u \in L^1_tL^q_x$. We want to prove $u\in\E_{\tau,k,n,\psi}$.\\
For any $j\in\N$, let $\bar\rho_j\in L^{q'}(\T^d)$ and $\rho_j\in C_w([0,\tau];L^{q'}(\T^d))$ solving
\begin{equation*}
    \left\{\begin{aligned}
        &\partial_t\rho_j + \div(\rho_ju_j)=0, \quad \text{on }\, \T^d\times [0,\tau]\\
        &\rho_j|_{t=0}=\bar\rho_j
    \end{aligned}\right.
\end{equation*}
and satisfying properties
(\textit{i}) and (\textit{ii}) as in \eqref{eq:def_iniseme_esistenza}.
By Lemma \ref{lemma:equicont_solutions}, for all $j\in \N$ and for every $\tau\ge t\ge s\ge 0$ it holds
    \begin{equation*}
        d_n(\rho_j(t),\rho_j(s))\le n \int_s^t\norm{u_j(z,\cdot)}_{L^q}\,dz.
    \end{equation*}
    In particular, since $u_j\to u$ in $L^1_tL^q_x$, we get that $\{t\mapsto \rho_j(t)\}_{j\in\N}$ is an equicontinuous family of maps with values in $\bar B_n^{q'}$, thus by Ascoli-Arzelà theorem it is precompact in $C_w([0,\tau];{\bar B_n^{q'}})$.
We conclude there exists subsequences $\{\bar \rho_{j_m}\}_{m\in\N}$, $\{\rho_{j_m}\}_{m\in\N}$ such that
\begin{equation*}
\begin{aligned}
    &\rho_{j_m}\to \rho \,\,\text{ in } \, C_w([0,\tau];{\bar B_n^{q'}})\\
    &\bar\rho_{j_m} \to \bar\rho \,\,\text{ in }\, L^{q'}(\T^d).
\end{aligned}
\end{equation*}
In particular,
\begin{equation*}
   \rho|_{t=0}=\bar\rho.
\end{equation*}
Clearly $\bar\rho$ still satisfies (\textit{i}). Finally the strong convergence $u_j \to u$ in $L^1_t L^q_x$ together with the weak* convergence $\rho_{j_m} \to \rho$ implies that 
$\bar \rho$ satisfies (\textit{ii}) as well.

\medskip

\textit{\underline{Claim 3}. For any $\tau, k,n,\psi$, the set $\E_{\tau,k,n,\psi}$ has empty interior.}

\smallskip

Observe that the complement of $\E_{\tau,k,n,\psi}$ is
\begin{equation*}
\begin{aligned}
    \E_{\tau,k,n,\psi}^c:= \big\{u\in L^1_tL^q_x\,:\, &\forall \bar \rho \in L^{q'}\,\text{ s.t. } 
      \,\, \left|\int_{\T^d}\psi(a)\bar\rho(a)\,da\right|\ge \frac{1}{k},\\
    &\text{ if } \rho\in C_{w}([0,\tau];L^{q'}(\T^d)) \,\,\text{ is a local solution to \eqref{eq:PDE} }\\
    &\text{ up to time $\tau$ with initial datum $\bar\rho$,} \\
    & \text{ then} \sup_{t\in [0,\tau]}\norm{\rho(t)}_{L^{q'}}>n\big\} .
    \end{aligned}
\end{equation*}
To prove the claim it is enough to show that $\E_{\tau,k,n,\psi}^c$ is dense in $L^1_tL^q_x$.
Therefore, by density of smooth vector fields in $L^1_tL^q_x$, it is enough to show that for  every vector field $u\in C^{\infty}([0,T]\times\T^d)$ and for every $\e>0$ there exists $w\in \E_{\tau,k,n,\psi}^c$ such that $\norm{u-w}_{L^{1}_tL^q_x}\le \e$.
But here it is enough to apply Proposition \ref{prop:main_proposition_existence} to $u$ with parameters $\e>0$, $\tau$, $M=n$, $\delta=1/k$ and $\varphi=\psi$ to construct such $w$ and conclude the proof.
\end{proof}

\subsection{Proof of the main Proposition \ref{prop:main_proposition_existence}}\label{sec:main_prop_existence}
The proof will proceed as follows.
\begin{enumerate}
    \item We will start by constructing in Lemma \ref{l:auton_bb} the smooth building block of the whole proof. 
    \item In Lemma \ref{l:bb_auton_zero} we will produce, by means of an appropriate rescaling, a smooth and arbitrarily small vector field having the property \eqref{eq:norm_inflation}.
    \item Finally, we will combine Lemma \ref{l:bb_auton_zero} together with Lemma \ref{lemma:composition_lemma} to prove Proposition \ref{prop:main_proposition_existence}.
\end{enumerate}

In the following, for a given smooth vector field $v$ we indicate the Jacobian of its flow map by 
\begin{equation*}
    J^v(t,x):=\mathrm{det }\nabla X^v(t,x).
\end{equation*}
We adopt also the notation $$Q_L(0)=\{x\in\R^d\,:\,|x_i|< L/2 \text{ for } i =1,\dots, d\}$$
for the open cube of side $L$ centered in the origin, in analogy with the standard notation $B_L(0)$ for the open ball of radius $L$ centered in the origin.
Finally, for a Borel set $A$ (on $\T^d$ or $\R^d$) we denote by $|A|$ its Lebesgue measure (on $\T^d$ or $\R^d$).

\begin{lemma}\label{l:auton_bb}
    For every $\eta\in(0,1)$ there is an autonomous vector field $ v_\eta \in C^\infty(\T^d; \R^d)$ such that
    \begin{equation}\label{eq:norm-v-eta}
        \norm{v_{\eta}}_{L^{\infty}}\le 1
    \end{equation}
    and 
    \begin{equation}\label{eq:jacob_bb}
        \big| \{ x\in \T^d\,:\,J^{v_{\eta}}(1, x) \geq \eta \} \big| \leq \eta.
    \end{equation}
\end{lemma}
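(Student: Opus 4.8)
The plan is to build a smooth autonomous vector field whose time-one flow compresses most of the torus into a small neighborhood of a point. The natural model is the (non-smooth) radial field $-x/|x|$ mentioned in the introduction: in one unit of time, a point at distance $r$ from the origin moves to distance roughly $r-1$, so a ball of radius slightly larger than $1$ gets crushed onto the origin, which forces the Jacobian of the flow to be extremely small (in fact zero in the singular model) on a set of nearly full measure. The task is to produce a genuinely smooth, torus-periodic version of this with a sup-norm bound, and quantify the smallness of the Jacobian.

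First I would work on $\R^d$ (or rather on the cube $Q_1(0) \subset \T^d$, extending periodically): fix a small $\lambda>0$ and consider a smooth radial vector field $v(x) = -\phi(|x|)\, x/|x|$ for $|x|$ away from $0$, suitably modified near the origin to be smooth, where $\phi$ is a smooth cutoff equal to $1$ on an annulus and supported in $Q_1(0)$, so that $\|v\|_{L^\infty}\le 1$ and $v$ extends to a smooth periodic field on $\T^d$ by setting it to $0$ outside $Q_1(0)$ (after rescaling the whole construction into a small cube so the "radius $\gtrsim 1$" picture fits inside a fundamental domain — one can always rescale by a factor and accept a loss only in the smallness parameter, not in the $L^\infty$ bound, since $-x/|x|$ is scale-invariant in magnitude). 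Along the flow lines of the genuine radial part, points on the sphere of radius $r_0$ (for $r_0$ in the "speed-one" annulus, minus a tiny inner buffer) all collapse toward the origin, so the time-one flow map $X^{v}(1,\cdot)$ maps a set $\Omega$ of measure close to $|Q_1(0)|$ (hence, after the rescaling, close to a fixed positive fraction of $|\T^d|$) into a ball of radius $O(\lambda)$.

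The key quantitative step is then to convert "the image has small measure" into "the Jacobian is small on a large set". Since $X^{v}(1,\cdot)$ is a smooth diffeomorphism of $\T^d$ (the flow of a smooth field) and $\int_{\T^d} J^{v}(1,x)\,dx = |\T^d| = 1$, while $\int_\Omega J^{v}(1,x)\,dx = |X^{v}(1,\Omega)| \le$ (measure of a small ball) $\le \kappa$ for a parameter $\kappa$ we control, Chebyshev's inequality gives
\begin{equation*}
    \big| \{ x \in \Omega : J^{v}(1,x) \ge \eta \} \big| \le \frac{\kappa}{\eta},
\end{equation*}
and hence $\big|\{x\in\T^d : J^{v}(1,x)\ge\eta\}\big| \le |\T^d\setminus\Omega| + \kappa/\eta$. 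Choosing the cutoff/rescaling so that $|\T^d\setminus\Omega| \le \eta/2$ and then choosing $\kappa \le \eta^2/2$ yields \eqref{eq:jacob_bb}, while $\|v_\eta\|_{L^\infty}\le 1$ is built in by construction. One must double-check that all these constraints can be met simultaneously: shrinking the support cube to make $|\T^d\setminus\Omega|$ small does not hurt the $L^\infty$ bound (the field's magnitude is $\le 1$ regardless of scale), and shrinking the collapse radius to make $\kappa$ small is a free parameter in the cutoff $\phi$.

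The main obstacle I anticipate is the bookkeeping at the origin and at the boundary of the support cube: ensuring $v_\eta$ is simultaneously (a) smooth across $x=0$ — which requires replacing $-x/|x|$ by something like $-x\,\psi(|x|^2)$ with $\psi$ smooth and $\psi(s)=1/\sqrt{s}$ for $s$ in the relevant range, and checking the $L^\infty$ bound survives the smoothing near $0$ — and (b) smooth and periodic across $\partial Q_1(0)$, which forces the radial speed to taper to $0$ before reaching the boundary, creating a thin outer shell where the "distance decreases by $1$" heuristic fails; one absorbs that shell into the bad set $\T^d\setminus\Omega$ and just makes it thin enough. Verifying that the collapsed image genuinely has measure $\le\kappa$ requires a short ODE estimate: solving $\dot r = -\phi(r)$ and checking that $r(1)$ is small for all admissible initial radii $r_0$, which is elementary once $\phi\equiv 1$ on a long enough interval. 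None of these steps is deep, but the smoothness patching is where the care is needed.
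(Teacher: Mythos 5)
Your overall strategy is exactly the paper's: a radial field $-x/|x|$, cut off near the origin and near the boundary of the fundamental cube, whose time-one flow crushes a large set $\Omega$ onto a small ball, followed by the Chebyshev step $\int_\Omega J^{v_\eta}(1,x)\,dx=|X^{v_\eta}(1,\Omega)|\le\kappa$ hence $|\{x\in\Omega: J^{v_\eta}(1,x)\ge\eta\}|\le\kappa/\eta$, with the thin dead shell near $\partial Q_1(0)$ absorbed into the bad set. That is the proof, and you correctly identified the Chebyshev conversion as the only quantitative step.

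The one step that would fail as written is the parenthetical ``rescale the whole construction into a small cube\dots accept a loss only in the smallness parameter, not in the $L^\infty$ bound.'' The tension you are trying to resolve is real: a field of magnitude $\le 1$ cannot transport the corners of $Q_1(0)$, which sit at distance $\sqrt{d}/2$ from the center, to the origin within time $1$ once $d\ge 5$. But shrinking the support to a single small cube $Q_L(0)$ does not fix this at a cost ``only in the smallness parameter'': on $\T^d\setminus Q_L(0)$ the field vanishes, the flow is the identity, $J\equiv 1\ge\eta$, and $|\T^d\setminus Q_L(0)|=1-L^d$ is large, so \eqref{eq:jacob_bb} is violated. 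You need one of two standard repairs: either tile $\T^d$ by $\lambda^d$ translated copies of the shrunken cell, each crushed onto its own center, so the dead zone is again only a union of thin shells (this is in effect the rescaling $v_\eta(\lambda\cdot)$ that the paper performs in the next lemma, though there for a different purpose); or simply give the field speed $\sqrt{d}/2$ instead of $1$, which is what the paper's proof actually does — at the price that the bound \eqref{eq:norm-v-eta} really comes out as $\|v_\eta\|_{L^\infty}\le\sqrt{d}/2$, a harmless discrepancy since the field is later divided by $\tau\lambda$ anyway. Everything else in your plan (the smoothing at the origin via a cutoff vanishing on a small inner ball, which is innocuous because $t\mapsto|X(t,x)|$ is non-increasing so that ball is forward-invariant; the choice of parameters at the end) matches the paper and is fine.
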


\begin{proof}
    Let $\eta \in (0,1)$ be  fixed. We construct a vector field $v\in C^{\infty}_c(\R^d;\R^d)$ compactly supported in $Q_1(0)$ and depending on $\eta$ and then we define $v_{\eta}$  in the statement as the periodic extension of $v$.\\
    Let $\delta > 0$ to be fixed later depending on $\eta$.
    Let $\psi\in C^{\infty}_c(\R^d;\R)$ be a smooth, non negative cutoff with the following properties:
    \begin{equation*}
        \psi(x)=\left\{\begin{aligned}
            &0, && x\in \overline B_{\delta}(0),\\
            &1, && x\in Q_{1-2\delta}(0)\setminus \overline  B_{2\delta}(0),\\
            &0, &&\R^d\setminus Q_{1-\delta}(0).
        \end{aligned}\right.
    \end{equation*}
    If $\delta$ is small enough, $\overline{B}_{2\delta}(0) \subseteq Q_{1 - 2 \delta}$ and thus $\psi$ is well defined. 
    
    Define the smooth vector field $v\in C_c^{\infty}(\R^d)$ by
    \begin{equation*}
        v(x)=-\psi(x) \frac{\sqrt{d}}{2} \frac{x}{|x|}
    \end{equation*}
    and let $X:=X^v$ be its unique flow map and $J:=J^v$ the corresponding Jacobian.
    First of all we prove the following claim. 
    
    \medskip
    
    \textit{\underline{Claim}. $X(1,Q_{1-2\delta}(0))\subseteq \overline B_{2\delta}(0)$.}

    \smallskip

%%%%%%%%%%%%%%%%%%%%%%%%%%%%%%%%figure 1 %%%%%%%%%%%%%%%%%%%%%%%%%%%%%%%%%%%%%%%%%%
 \begin{figure}[t!]
    \centering
    \begin{subfigure}[b]{0.48\textwidth}
    \centering
    \includegraphics[width=1\linewidth]{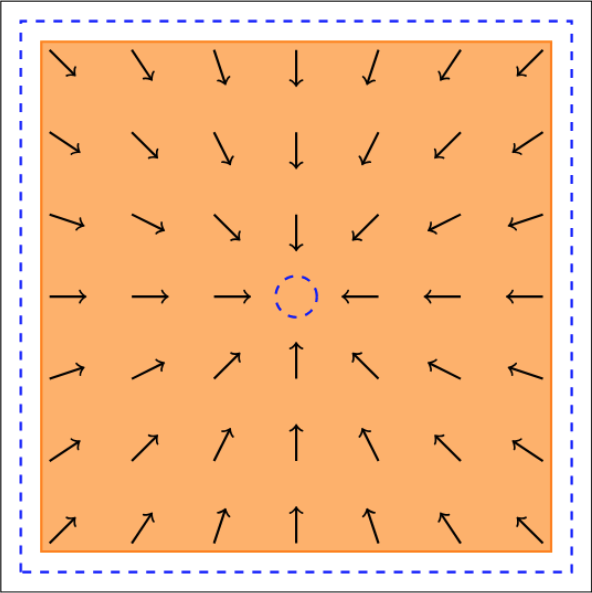}
    \caption{\label{fig:image1} $t=0$}
    \end{subfigure}
\quad
    \begin{subfigure}[b]{0.48\textwidth}
    \centering
    \includegraphics[width=1\linewidth]{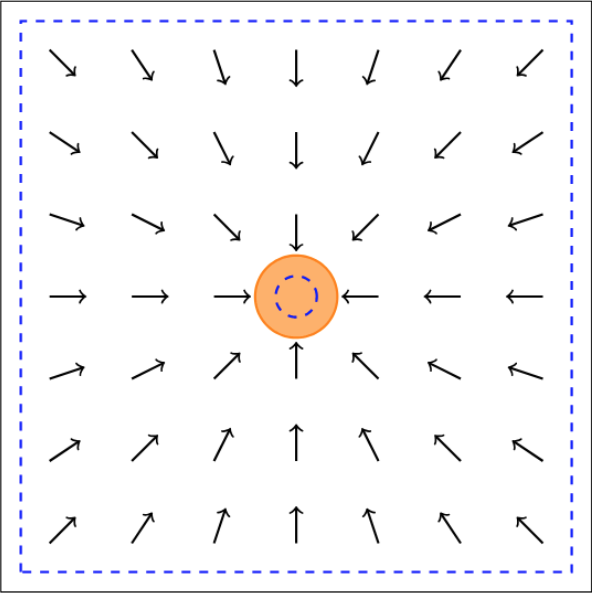}
    \caption{\label{fig:image2} $t=1$}
    \end{subfigure}
    \caption{A qualitative sketch of the stream lines of the autonomous vector field $v_{\eta}$ of Lemma \ref{l:auton_bb} restricted to $Q_1(0)$. 
    The region in between the two blue dashed curves is the support of $v_{\eta}$: the inner dashed curve is $\partial B_{\delta}(0)$, while the outer one is $\partial Q_{1-\delta}(0)$. 
    The orange square in figure \eqref{fig:image1} represents $Q_{1-2\delta}(0)$ at time $t=0$. 
    In figure \eqref{fig:image2} it is represented the outcome of the evolution at time $t=1$: the mass initially spread in $Q_{1-2\delta}(0)$ is now concentrated into $\bar B_{2\delta}(0)$.}\label{fig1}
\end{figure} 

%%%%%%%%%%%%%%%%%%%%%%%%%%%%%%%%%%%%%%%%%%%%%%%%%%%%%%%%%%%%%%%%%%%%%%%%%%

    Observe that for every $x \in Q_1(0)$ the map $t\mapsto |X(t,x)|$ is non increasing. 
    In fact: $v(0) = 0$, therefore $X(t,0) \equiv 0$ for all $t\ge 0$; on the other hand, if $x\neq 0$ then $t\mapsto |X(t,x)|$ is smooth, $|X(t,x)|>0$ for every $t\ge 0$ and we can explicitly compute
     \begin{equation}\label{eq:traj_decrease_bb_auton}
    \begin{aligned}
         \partial_t |X(t,x)| &= - \frac{\sqrt{d}}{2} \frac{X(t,x)}{|X(t,x)|}\cdot \frac{X(t,x)}{|X(t,x)|} \psi(X(t,x))\\
         &=- \frac{\sqrt{d}}{2} \psi(X(t,x))\le0.
    \end{aligned}
    \end{equation}
    
    Next, notice that for every $x\in Q_{1-2\delta}(0)\setminus \overline  B_{2\delta}(0)$ it holds 
    \begin{equation}\label{eq:traj_main_bb_auton}
        X(t,x)=\frac{x}{|x|}\left (|x|-  \frac{\sqrt{d}}{2} t \right), \quad \forall t \in \left[0,\frac{2}{\sqrt{d}} \left(|x|-2\delta\right) \right].
    \end{equation}
    Indeed, for $x \in Q_{1-2\delta}(0)\setminus \overline  B_{2\delta}(0)$, $u(x) = - x/|x|$ and thus \eqref{eq:traj_main_bb_auton} follows by explicit computation. 
    
    Using now the fact that $t \mapsto |X(t,x)|$ is non-increasing (see \eqref{eq:traj_decrease_bb_auton}) we deduce that 
    \begin{equation}
        \label{eq:bb_ball}
        X(t,\overline B_{2\delta}(0))\subseteq \overline B_{2\delta}(0), \quad \forall t\ge 0.
    \end{equation}
    Similarly, using in addition \eqref{eq:traj_main_bb_auton} (at time $t =\frac{2}{\sqrt{d}} (|x| - 2\delta)$), we deduce that 
    \begin{equation}
        \label{eq:bb_cornice}
        \forall x\in Q_{1-2\delta}(0)\setminus \overline  B_{2\delta}(0), \quad  X(t,x) \in \overline{B}_{2\delta} \quad \forall t\ge \frac{2}{\sqrt{d}} \left( |x|-2\delta \right).
    \end{equation}
    Combining \eqref{eq:bb_ball} and \eqref{eq:bb_cornice}, we get 
    \begin{equation*}
        X(1,Q_{1-2\delta}(0))= X(1,\overline B_{2\delta}(0))\,\cup \,X(1,Q_{1-2\delta}(0)\setminus \overline  B_{2\delta}(0))\subseteq \overline{B}_{2\delta}(0)
    \end{equation*}
    which concludes the proof of the Claim.

    \medskip

    Set now $$A_{\eta}:=\{x\in Q_1(0)\,:\, J(1,x)\ge \eta\}.$$
    To estimate its Lebesgue measure, observe that  
    \begin{equation*}
    A_{\eta}\subseteq (Q_1(0)\setminus Q_{1-2\delta}(0))\cup (A_{\eta}\cap Q_{1-2\delta}(0))
    \end{equation*}
    First of all notice that 
    \begin{equation*}
        |Q_1(0)\setminus Q_{1-2\delta}(0)|\le C_d\, \delta,
    \end{equation*}
    where $C_\delta$ is a dimensional constant. 
    Let us now estimate the measure of the set 
    $$A_{\eta}\cap Q_{1-2\delta}(0)=\{x \in Q_{1-2\delta}(0)\,:\, J(1,x)\ge \eta \}.$$
    From a direct calculation we get
        \begin{equation*}
         \int_{Q_{1-2\delta}(0)}J(1,x)\, dx= \int_{X(1,Q_{1-2\delta}(0))}1\, dx' \le |B_{2\delta}(0)| \le C_d\delta^{d},
     \end{equation*}
    where in the first equality we  used the change of variable $x' = X(t,x)$, in the first inequality we used the Claim, and $C_d$ denotes a dimensional constant. Next we use Chebyscev inequality to compute 
   \begin{equation*}
       |A_{\eta}\cap Q_{1-2\delta}(0)|=|\{x \in Q_{1-2\delta}(0)\,:\, J(1,x)\ge \eta \}| \le C_d \frac{\delta^d}{\eta}.
   \end{equation*}
    Overall we have 
    \begin{equation*}
        |A_{\eta}|\le |Q_1(0)\setminus Q_{1-2\delta}(0)| +|A_{\eta}\cap Q_{1-2\delta}(0)| \le C_d\left(\delta + \frac{\delta^d}{\eta}\right)\le \eta
    \end{equation*} 
    once $\delta>0$ is chosen sufficiently small (depending on $\eta$).
\end{proof}

%%%%%%%%%%%%%%%%%%%%%%%%%%%%%%%%%%%%%%%%%%%%%%%%%%%%%%%%%%%%%%%%%%%%%%%%%%%%%%

As already mentioned before, we are now going to appropriately rescale the vector field $v_{\eta}$. In order to keep track on the set where the Jacobian of the rescaled vector field is large (as we did for $v_{\eta}$ in \eqref{eq:jacob_bb}), we will use the following standard identities. 

\begin{lemma}\label{l:rescaling-oscillation}
    Let $v\in C^{\infty}([0,\infty)\times\T^d;\R^d)$ and let $\lambda\in \N_{>0}$ and $\tau>0$. Define $v_{\tau,\lambda}(x):=\frac{1}{\tau\lambda}v\left(\frac{t}{\tau},\lambda x\right)$. 
    Then 
    \begin{equation}
    \label{eq:rescaled_flux}
    \begin{aligned}
        X_{{\tau,\lambda}}(t,x) &=\frac{1}{\lambda}X\left(\frac{t}{\tau},\lambda x\right), &
        J_{{\tau,\lambda}}(t,x) & = J\left(\frac{t}{\tau},\lambda x\right),
    \end{aligned}
    \end{equation}
where 
\begin{equation*}
    \begin{aligned}
        X &:= X^v, & J & := J^v, &
        X_{\tau, \lambda} & := X^{v_{\tau, \lambda}}, & J_{\tau, \lambda} & := J^{v_{\tau, \lambda}},
    \end{aligned}
\end{equation*}
denotes the flow map and the Jacobian determinant of $v$ and $v_{\tau, \lambda}$ respectively.
\end{lemma}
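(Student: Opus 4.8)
The plan is to verify the claimed formula for the rescaled flow map directly, by exhibiting the right-hand side of the first identity in \eqref{eq:rescaled_flux} as a solution of the flow ODE \eqref{eq:flow-map} associated to $v_{\tau,\lambda}$ and invoking uniqueness; the Jacobian identity then follows by differentiating this identity in space and taking determinants.

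Concretely, I would set $Y(t,x):=\tfrac{1}{\lambda}X\big(\tfrac{t}{\tau},\lambda x\big)$, where $X=X^v$. Since $\lambda\in\N_{>0}$, the map $x\mapsto\lambda x$ descends to $\T^d$, the field $(t,x)\mapsto v(t/\tau,\lambda x)$ is again $1$-periodic in $x$ and smooth, and $X(s,\cdot)$ commutes with integer translations; hence $v_{\tau,\lambda}$ is a smooth vector field on $[0,\infty)\times\T^d$ and $Y$ is a well-defined smooth map into $\T^d$. Differentiating $Y$ in time and using that $X$ solves \eqref{eq:flow-map} for $v$,
\[
\partial_t Y(t,x)=\tfrac{1}{\tau\lambda}(\partial_t X)\big(\tfrac{t}{\tau},\lambda x\big)=\tfrac{1}{\tau\lambda}\,v\big(\tfrac{t}{\tau},X(\tfrac{t}{\tau},\lambda x)\big)=\tfrac{1}{\tau\lambda}\,v\big(\tfrac{t}{\tau},\lambda Y(t,x)\big)=v_{\tau,\lambda}(t,Y(t,x)),
\]
and $Y(0,x)=\tfrac{1}{\lambda}X(0,\lambda x)=x$. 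By uniqueness of the flow of the smooth field $v_{\tau,\lambda}$ we conclude $X_{\tau,\lambda}=Y$, i.e. the first identity in \eqref{eq:rescaled_flux}. For the second, I would simply apply $\nabla_x$ to this identity: by the chain rule $\nabla_x X_{\tau,\lambda}(t,x)=\tfrac{1}{\lambda}\cdot\lambda\,(\nabla X)\big(\tfrac{t}{\tau},\lambda x\big)=(\nabla X)\big(\tfrac{t}{\tau},\lambda x\big)$, and taking determinants yields $J_{\tau,\lambda}(t,x)=J\big(\tfrac{t}{\tau},\lambda x\big)$.

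There is no real obstacle here: the amplitude factor $\tfrac{1}{\tau\lambda}$ in the definition of $v_{\tau,\lambda}$ is chosen precisely so that the space dilation by $\lambda$ and the time dilation by $\tau$ leave the flow ODE invariant, and the whole statement is just the bookkeeping recording this scaling. The only point deserving a line of comment is the periodicity issue above, ensuring that everything makes sense on $\T^d$ rather than merely on $\R^d$.
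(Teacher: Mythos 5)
Your proof is correct, and it is the standard verification (check that $\tfrac{1}{\lambda}X(t/\tau,\lambda\,\cdot)$ solves the flow ODE for $v_{\tau,\lambda}$, invoke uniqueness, then differentiate in space) that the paper implicitly relies on — the lemma is stated there without proof as a "standard identity". Your remark on periodicity, i.e.\ that $\lambda\in\N_{>0}$ and the commutation of the lifted flow with integer translations make $\tfrac{1}{\lambda}X(t/\tau,\lambda x)$ descend to $\T^d$, is exactly the one point worth spelling out.
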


We are now ready to prove the following lemma, which is essentially the statement of Proposition \ref{prop:main_proposition_existence} with $u \equiv 0$. 

\begin{lemma}\label{l:bb_auton_zero}
    For every $\e>0$, $\delta>0$, $\tau>0$, $M>1$ and $\varphi\in C^\infty(\T^d)$, there exists an autonomous vector field $v=v_{\e, \delta, \tau,M,\varphi}\in C^{\infty}(\T^d;\R^d)$ such that
    \begin{equation}
    \label{eq:norm-v}
        \norm{v}_{L^{\infty}}\le \e
    \end{equation}
    and having the following property. For every $\bar\rho \in L^{q'}(\T^d)$ such that
    $$\left| \int_{\T^d}\varphi(x)\bar\rho(x)\,dx \right|\ge \delta,$$
    the unique solution $\rho\in C_w([0,+\infty);L^{q'}(\T^d))$ to the Cauchy problem
    \begin{equation*}
       \left\{\begin{aligned}
            &\partial_t\rho + \div (\rho v)=0,\\
            &\rho_{t=0}=\bar\rho
       \end{aligned}\right.
    \end{equation*}
    satisfies
    \begin{equation*}
         \sup_{s\in[0,\tau]}\norm{\rho(s)}_{L^{q'}}> M.
     \end{equation*}
\end{lemma}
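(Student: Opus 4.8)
The strategy is to take the building block $v_\eta$ from Lemma \ref{l:auton_bb}, rescale it in space (and time) as in Lemma \ref{l:rescaling-oscillation} to make it small in $L^\infty$, and then check that the concentration of the Jacobian on a small set forces norm inflation of the pushed-forward density. Concretely: given $\e, \delta, \tau, M, \varphi$, first I would fix a small $\eta>0$ (to be chosen at the end, depending on all parameters and on $\|\varphi\|_{L^\infty}$), take $v_\eta$ from Lemma \ref{l:auton_bb}, and set $v := v_{\tau,\lambda}(x) = \frac{1}{\tau\lambda} v_\eta(\lambda x)$ for $\lambda \in \N_{>0}$ large. Since $\|v_\eta\|_{L^\infty}\le 1$, we get $\|v\|_{L^\infty}\le \frac{1}{\tau\lambda}\le \e$ once $\lambda$ is large enough; this gives \eqref{eq:norm-v}. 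The vector field $v$ is autonomous and smooth (periodic extension of a compactly supported bump, rescaled), so the continuity equation with $v$ has a unique solution $\rho \in C_w([0,\infty);L^{q'})$ given by $\rho(t,\cdot) = X^v(t)_\sharp \bar\rho$, i.e. $\rho(t,\cdot) = (\bar\rho \circ \Phi^v(t,\cdot))\, J^{\Phi^v}(t,\cdot)$ — equivalently $\rho(t, X^v(t,x)) J^v(t,x) = \bar\rho(x)$.

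The heart of the matter is the lower bound on $\sup_{s\in[0,\tau]}\|\rho(s)\|_{L^{q'}}$. I would evaluate at time $s=\tau$, where by \eqref{eq:rescaled_flux} the Jacobian is $J^v(\tau,x) = J^{v_\eta}(1,\lambda x)$, so by \eqref{eq:jacob_bb} the set $G_\lambda := \{x : J^v(\tau,x)\ge \eta\}$ has measure $|G_\lambda|\le \eta$ (using that the rescaling $x\mapsto \lambda x$ is measure preserving on the torus for integer $\lambda$, so the measure of the "bad Jacobian" set is unchanged). On the complement, $J^v(\tau,x) < \eta$; pushing forward, the mass $\int \varphi \bar\rho$ splits according to whether the preimage point lies in $G_\lambda$ or not. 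Using the change of variables $x' = X^v(\tau,x)$ and $\rho(\tau,x')\,dx' $-vs-$\bar\rho(x)\,dx$, I would write
\begin{equation*}
    \int_{\T^d}\varphi\,\bar\rho\,dx = \int_{\T^d}\varphi(X^v(\tau,x))\,\rho(\tau,X^v(\tau,x))\,J^v(\tau,x)\,dx,
\end{equation*}
split the integral over $G_\lambda$ and its complement, and bound the contribution from $G_\lambda$ by $\|\varphi\|_{L^\infty}\|\rho(\tau)\|_{L^{q'}}|G_\lambda|^{1/q}\le \|\varphi\|_{L^\infty}\|\rho(\tau)\|_{L^{q'}}\eta^{1/q}$ (Hölder), and the contribution from the complement by $\|\varphi\|_{L^\infty}\eta\,\|\bar\rho\|_{L^1}\cdot(\text{something})$ — more precisely, on $\{J^v(\tau,\cdot)<\eta\}$ one has $|\varphi(X^v)\rho(\tau,X^v)J^v| \le \|\varphi\|_{L^\infty}\cdot \eta \cdot |\rho(\tau, X^v(\tau,x))|$ and after change of variables this is $\le \|\varphi\|_{L^\infty}\eta \int |\rho(\tau,x')|/J^v(\tau,\Phi^v(\tau,x'))\,dx'$, which needs a little care. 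A cleaner route: directly estimate, using $\bar\rho(x) = \rho(\tau,X^v(\tau,x))J^v(\tau,x)$ and splitting,
\begin{equation*}
    \delta \le \left|\int \varphi\,\bar\rho\right| \le \|\varphi\|_{L^\infty}\Big(\int_{G_\lambda}|\bar\rho|\,dx + \int_{\{J^v(\tau,\cdot)<\eta\}}|\bar\rho|\,dx\Big),
\end{equation*}
where the second term, after the change of variables $x'=X^v(\tau,x)$, equals $\int_{X^v(\tau,\{J<\eta\})}|\rho(\tau,x')|\cdot\frac{J^v(\tau,x)}{1}\cdot\frac{1}{J^v}\dots$ — this bookkeeping shows the second term is $\le \eta^{1-1/q'}\|\rho(\tau)\|_{L^{q'}}|\T^d|^{1/q}$ or similar, after using $J<\eta$ in the image. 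I would then conclude $\delta \le C(\|\varphi\|_{L^\infty})\,\eta^{c}\,\|\rho(\tau)\|_{L^{q'}} + C(\|\varphi\|_{L^\infty})\,\eta^{c'}\,\|\bar\rho\|_{L^{q'}}$ for positive exponents $c,c'$; but since $\bar\rho$ is arbitrary (only constrained by the lower bound on $\int\varphi\bar\rho$, which does not bound $\|\bar\rho\|_{L^{q'}}$ from above), I should avoid any term with $\|\bar\rho\|_{L^{q'}}$ on the right.

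The correct decomposition, which I expect to be the main technical point, is therefore to split the \emph{image} side rather than the source side. Write, with $y = X^v(\tau,x)$, $x = \Phi^v(\tau,y)$,
\begin{equation*}
    \int \varphi\,\bar\rho\,dx = \int_{\T^d}\varphi(\Phi^v(\tau,y))\,\rho(\tau,y)\,dy,
\end{equation*}
which is just the pushforward identity. Now split $\T^d = X^v(\tau,G_\lambda) \cup X^v(\tau, G_\lambda^c)$. On $X^v(\tau,G_\lambda^c)$ we have $J^v(\tau,\Phi^v(\tau,y)) < \eta$, hence $|X^v(\tau,G_\lambda^c)| = \int_{G_\lambda^c}J^v(\tau,x)\,dx < \eta$; so this piece contributes at most $\|\varphi\|_{L^\infty}\|\rho(\tau)\|_{L^{q'}}\eta^{1/q}$ by Hölder. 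On $X^v(\tau,G_\lambda)$: $|X^v(\tau,G_\lambda)| = \int_{G_\lambda}J^v(\tau,x)\,dx \le \|J^v(\tau,\cdot)\|_{L^\infty}|G_\lambda| \le C\eta$ (the flow map of a bounded field on $[0,\tau]$ has $J^v$ bounded by $e^{\tau\|\div v\|_\infty}$, uniformly controllable — actually $\div v = \frac{1}{\tau}(\div v_\eta)(\lambda\,\cdot)$ is not uniformly bounded in $\lambda$; instead I use $|X^v(\tau,G_\lambda)|\le |\T^d|$ trivially is too weak, so I must use $\int_{G_\lambda}J^v = |X^v(\tau,G_\lambda)| \le 1$, giving no smallness). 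This is the subtlety: the "good" smallness comes only from the $G_\lambda^c$ piece via $|X^v(\tau,G_\lambda^c)|<\eta$, and from the fact that $|G_\lambda|\le\eta$ controls the \emph{source} measure of the bad set. So I would instead bound the $G_\lambda$-source contribution as $\|\varphi\|_{L^\infty}\int_{G_\lambda}|\bar\rho|\le \|\varphi\|_{L^\infty}\|\bar\rho\|_{L^{q'}}\eta^{1/q}$ — but this reintroduces $\|\bar\rho\|_{L^{q'}}$. The resolution: choose also a lower bound regime — observe we may assume WLOG $\|\bar\rho\|_{L^{q'}} \le $ something, OR, better, note that if $\|\bar\rho\|_{L^{q'}}$ is huge then $\rho(0) = \bar\rho$ already gives $\sup_{[0,\tau]}\|\rho\|_{L^{q'}} \ge \|\bar\rho\|_{L^{q'}} > M$ and we are done. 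So I would split into two cases: (a) $\|\bar\rho\|_{L^{q'}} > M$: done immediately since $\rho(0)=\bar\rho$; (b) $\|\bar\rho\|_{L^{q'}}\le M$: then $\int_{G_\lambda}|\varphi\bar\rho| \le \|\varphi\|_{L^\infty}M\eta^{1/q}$, so $\delta \le \|\varphi\|_{L^\infty}M\eta^{1/q} + \|\varphi\|_{L^\infty}\eta^{1/q}\|\rho(\tau)\|_{L^{q'}}$, and choosing $\eta$ small enough that $\|\varphi\|_{L^\infty}M\eta^{1/q} \le \delta/2$ forces $\|\rho(\tau)\|_{L^{q'}} \ge \frac{\delta}{2\|\varphi\|_{L^\infty}\eta^{1/q}}$, which exceeds $M$ once $\eta$ is further shrunk. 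This closes the argument; the main obstacle is precisely this splitting/bookkeeping to avoid an uncontrolled $\|\bar\rho\|_{L^{q'}}$ term, resolved by the two-case trick.
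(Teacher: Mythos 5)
Your proposal is correct and follows essentially the same route as the paper: rescale the building block $v_\eta$ of Lemma \ref{l:auton_bb} via Lemma \ref{l:rescaling-oscillation} to get the $L^\infty$ smallness, note that the set where $J^v(\tau,\cdot)\ge\eta$ keeps measure $\le\eta$ under the integer dilation, reduce to the case $\|\bar\rho\|_{L^{q'}}\le M$ (otherwise the conclusion is trivial at $s=0$), and split the pairing $\int\varphi\bar\rho$ into the bad-Jacobian source piece (bounded by $\|\varphi\|_{L^\infty}M\eta^{1/q}$) and the good piece estimated on the image side using $J^v(\tau,\cdot)<\eta$, forcing $\|\rho(\tau)\|_{L^{q'}}\to\infty$ as $\eta\to 0$. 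The paper's only cosmetic differences are that it uses H\"older with $\|\varphi\|_{L^q(A_{\eta,\tau,\lambda})}$ plus absolute continuity of the integral where you use the cruder $\|\varphi\|_{L^\infty}|A|^{1/q}$ bound; both work since $\varphi$ is smooth.
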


\begin{proof}
    
    Let $\e>0$, $\delta>0$, $\tau>0$, $M>1$ and $\varphi\in C^\infty(\T^d)$ be fixed. Let $\eta>0$ and $\lambda \in \N$ be two parameters which will be fixed during the proof, depending on $\e, \delta, \tau, M$, and $\varphi$.   Let $v_{\eta}\in C^{\infty}(\T^d;\R^d)$ be the vector field given by Lemma \ref{l:auton_bb}. Set
    \begin{equation*}
        v_{\eta,\tau,\lambda}(x):=\frac{1}{\tau \lambda}v_{\eta}(\lambda x).
    \end{equation*}

%%%%%%%%%%%%%%%%%%%%%%%%%%%%%%figure 2%%%%%%%%%%%%%%%%%%%%%%%%%%%%%%%%%%%%%%%%%%%%%%%%%%%%%%%
\begin{figure}[t]
    \label{fig2}
    \centering
    \begin{subfigure}[b]{0.48\textwidth}
    \centering
    \includegraphics[width=1\linewidth]{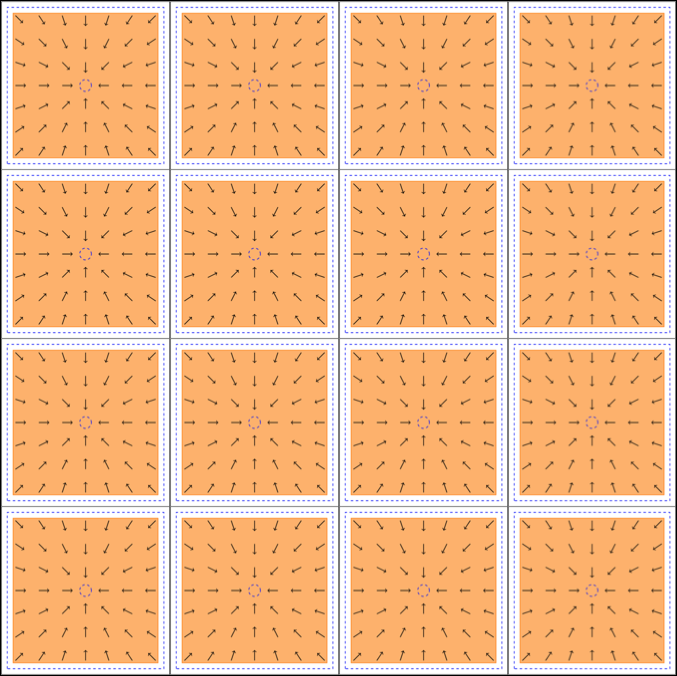}
    \caption{\label{fig:image3} $t=0$}
    \end{subfigure}
\quad
    \begin{subfigure}[b]{0.48\textwidth}
    \centering
    \includegraphics[width=1\linewidth]{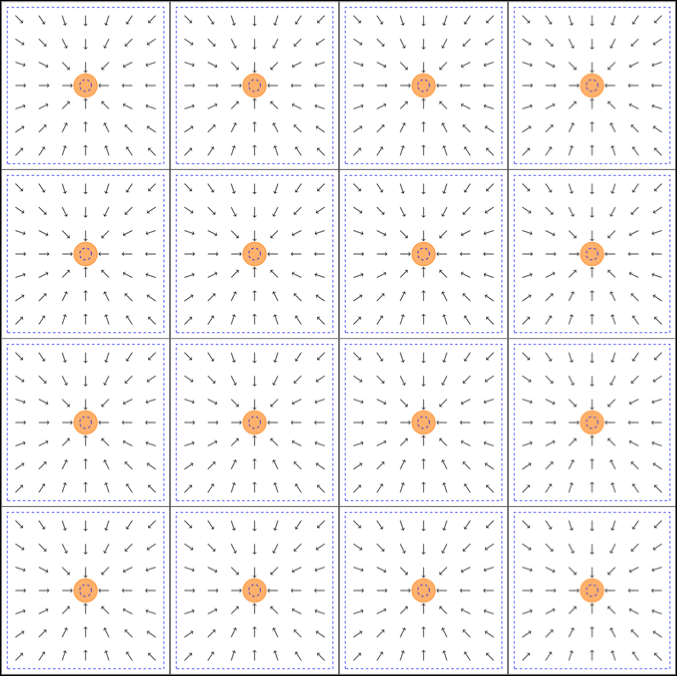}
    \caption{\label{fig:image4} $t=\tau$}
    \end{subfigure}
    \caption{The vector field $v$ in Lemma \ref{l:bb_auton_zero} is a rescaled version of $v_{\eta}$ from Lemma \ref{l:auton_bb} that replicates the same behavior at different space-time scales.}
\end{figure} 
%%%%%%%%%%%%%%%%%%%%%%%%%%%%%%%%%%%%%%%%%%%%%%%%%%%%%%%%%%%%%%%%%%%%%%%%%%%%%%%%%%%%%

    \noindent We denote by
    \begin{equation*}
        \begin{aligned}
            X_\eta & := X^{v_{\eta}}, &
            \Phi_\eta & := \Phi^{v_{\eta}}, &
            J_\eta & := J^{v_{\eta}}, \\
            X_{\eta, \tau, \lambda} & := X^{v_{\eta, \tau, \lambda}}, &
            \Phi_{\eta, \tau, \lambda} & := \Phi^{v_{\eta, \tau, \lambda}}, &
            J_{\eta, \tau, \lambda} & := J^{v_{\eta, \tau, \lambda}}, &
        \end{aligned}
    \end{equation*}
    the flow map, the inverse flow map and the Jacobian determinant associated to $v_\eta$ and $v_{\eta, \tau, \lambda}$ respectively.

    \medskip
   
    \textit{Step 1 (choice of $\lambda$ and proof of \eqref{eq:norm-v}).} Notice that, by property \eqref{eq:norm-v-eta} in Lemma \ref{l:auton_bb},
    \begin{equation}
    \label{eq:est_auton_infty}
        \norm{v_{\eta,\tau,\lambda}}_{L^{\infty}}\le \frac{1}{\tau\lambda} < \e,
    \end{equation}
    once $\lambda$ is chosen sufficiently large. This fixes the choice of $\lambda$ (depending on $\tau, \e$) and guarantees   that $v_{\eta, \tau, \lambda}$ satisfies \eqref{eq:norm-v}.

 \medskip
    
    \textit{Step 2 (measure of $A_{\eta, \tau, \lambda})$}
     Set $A_{\eta, \tau,\lambda}  :=\left\{x\in\T^d\,:\,J_{{\eta, \tau,\lambda}}(\tau,x)\ge \eta \right\}.$

    We claim that  
    \begin{equation}
        \label{eq:measure-A}
        |A_{\eta, \tau, \lambda}| \leq \eta.
    \end{equation}

    \noindent We argue as follows: using \eqref{eq:rescaled_flux}, we get
    \begin{equation*}
    \begin{aligned}
        A_{\eta, \tau,\lambda} 
        & \ = \left\{ x \in \T^d \, : \, J_\eta(1, \lambda x ) \geq \eta \right\}.
    \end{aligned}
    \end{equation*}
    Observe now that for any measurable $f : \T^d \to \R$, 
    \begin{equation*}
        \left| \left\{ x \in \T^d \ : \ f(\lambda x) \geq \eta \right\} \right| = \left| \left\{ x \in \T^d \ : \ f(x) \geq \eta \right\} \right|. 
    \end{equation*}
    Therefore
    \begin{equation}
    \label{eq:jacob_rescaled_bb}
        \begin{aligned}
            |A_{\eta, \tau, \lambda}| 
            & = \left| \left\{ x \in \T^d \ : \ J_\eta(1, \lambda x) \geq \eta \right\} \right| 
             = \left| \left\{ x \in \T^d \ : \ J_\eta(1, x) \geq \eta \right\} \right| \leq \eta,
        \end{aligned}
    \end{equation}
    by  \eqref{eq:jacob_bb}. 
    
    \medskip

    \textit{Step 3 (norm inflation)}.     Let  $\bar \rho \in L^{q'}(\T^d)$ be such that 
    \begin{equation*}
        \left|\int_{\T^d}\bar\rho (x)\varphi(x)\,dx\right|\ge \delta.
    \end{equation*}
    Let $\rho\in C_{w}([0,+ \infty);L^{q'}(\T^d))$ be the unique solution to 
    \begin{equation*}
        \left\{\begin{aligned}
            \partial_t \rho + \div (\rho \,v_{\eta,\tau,\lambda})=0,\\
            \rho|_{t=0}=\bar\rho.
        \end{aligned}
        \right.
    \end{equation*}
    We claim that, for a suitable choice of $\eta>0$,  
    \begin{equation}
        \label{eq:norm-inflation}
        \sup_{s \in [0, \tau]} \|\rho(s)\|_{L^{q'}} > M.   
    \end{equation}
    
    We argue as follows.     W.l.o.g. we can assume  $\norm{\bar\rho}_{L^{q'}_x} \leq M$, otherwise \eqref{eq:norm-inflation} is trivially satisfied. We thus have
    \begin{equation}
    \label{eq:jacob_rescaled_main_cpt}
        \begin{aligned}
            \delta &\le \left|\int_{\T^d}\bar\rho (x)\varphi(x)\,dx\right|\le \left|\int_{A_{\eta,\tau,\lambda}}\bar\rho(x)\varphi(x)\, dx\right| + \left|\int_{A_{\eta,\tau,\lambda}^c}\bar\rho(x)\varphi(x)\, dx\right| \\
            & \text{(H\"older inequality and change of variable $x=\Phi_{\eta, \tau,\lambda}(\tau,x')$)} \\
            &\le \| \bar \rho\|_{L^{q'}(\T^d)} \norm{\varphi}_{L^{q}(A_{\eta,\tau,\lambda)}} + \int_{X_{\eta, \tau,\lambda}(\tau,A_{\eta,\tau,\lambda}^c)}|\rho(\tau,x')\varphi(\Phi_{{\eta, \tau,\lambda}}(\tau,x'))|\,dx' \\
            & \leq M \norm{\varphi}_{L^{q}(A_{\eta,\tau,\lambda)}} 
            + \norm{\rho(\tau)}_{L^{q'}(\T^d)} 
            \| \varphi (\Phi_{\eta, \tau, \lambda}(\tau, \cdot))\|_{L^q(X_{\eta, \tau,\lambda}(\tau,A_{\eta,\tau,\lambda}^c))}. 
        \end{aligned}
    \end{equation}
    Observe now that 
    \begin{equation*}
        \begin{aligned}
            \| \varphi (\Phi_{\eta, \tau, \lambda}(\tau, \cdot))\|_{L^q(X_{\eta, \tau,\lambda}(\tau,A_{\eta,\tau,\lambda}^c))}
            & = \left(\int_{X_{\eta, \tau,\lambda}(\tau,A_{\eta,\tau,\lambda}^c)} \left| \varphi(\Phi_{\eta, \tau, \lambda}(\tau, x')) \right|^q dx' \right)^{\frac{1}{q}}
            \\
            \text{(change of variable $x=\Phi_{\eta, \tau,\lambda}(\tau,x')$)}
            & =\left( \int_{A_{\eta,\tau,\lambda}^c}|\varphi(x)|^qJ_{{\eta,\tau,\lambda}}(\tau,x)\,dx\right)^{\frac{1}{q}}\\
            \text{(since $J_{\eta, \tau, \lambda}(\tau, \cdot) \leq \eta$ on $A_{\eta, \tau, \lambda}^c$)}
            &\le \eta^{\frac{1}{q}} \norm{\varphi}_{L^{q}(\T^d)}.
        \end{aligned}
    \end{equation*}
    Therefore, continuing the estimate in \eqref{eq:jacob_rescaled_main_cpt}, we get
    \begin{equation*}
        \delta \leq M \|\varphi\|_{L^q(A_{\eta, \tau, \lambda})} + \eta^{\frac{1}{q}} \| \varphi\|_{L^q(\T^d)} \| \rho(\tau)\|_{L^{q'}(\T^d)},
    \end{equation*}
    from which we get 
    \begin{equation}
    \label{eq:bdd-below}
        \| \rho(\tau)\|_{L^{q'}} \geq \frac{\delta - M \| \varphi\|_{L^q(A_{\eta, \tau, \lambda})}}{\eta^{\frac{1}{q}} \| \varphi\|_{L^q(\T^d)}}. 
    \end{equation}
    We observe now that $\| \varphi\|_{L^q(A_{\eta, \tau, \lambda})} \to 0$ as $\eta \to 0$ by the absolute continuity of the integral, and the fact that $|A_{\eta, \tau, \lambda}|  \to 0$ as $\eta \to 0$, by \eqref{eq:jacob_rescaled_bb}. Therefore the r.h.s. in \eqref{eq:bdd-below} converges to $+\infty$ as $\eta \to 0$. We can thus choose $\eta$ small enough such that 
    \begin{equation*}
        \| \rho(\tau)\|_{L^{q'}} > M. 
    \end{equation*}
    This fixes the choice of $\eta$, depending on $\delta, \tau, M$ and $\varphi$ and concludes the proof of \eqref{eq:norm-inflation}.

    With such choices of $\eta$ and $\lambda$, the vector field $v:=v_{\eta,\tau,\lambda}$ has all the desired properties.
\end{proof}

%%%%%%%%%%%%%%%%%%%%%%%%%%%%%%%%%%%%%%%%%%%%%%%%%%%%%%%%%%%%%%%%%%%

We are now ready to prove Proposition \ref{prop:main_proposition_existence}.

\begin{proof}[Proof of Proposition \ref{prop:main_proposition_existence}]
    Let $u$, $\e, \delta, \tau, M$ and $\varphi$ be as in the statement. Let $\tilde \e>0$ and $\tilde M>0$ be two constants to be fixed later. Let  $v=v_{\tilde \e, \delta, \tau,\tilde M,\varphi} \in C^{\infty}(\T^d;\R^d)$ be the autonomous vector field given by Lemma \ref{l:bb_auton_zero} (notice that we use $\tilde \e$ and $\tilde M$ instead of $\e$ and $M$).
    Define the smooth vector field
    \begin{equation*}
        w(t,x) := u(t,x) + (\nabla \Phi^u(t,x))^{-1}v(\Phi^u(t,x))
    \end{equation*}
    Fix $\tilde \e>0$ such that
    \begin{equation*}
        \begin{aligned}
        \norm{(\nabla \Phi^u)^{-1}v(\Phi^u)}_{L^\infty(0,T;L^q(\T^d))}
        & \le \| (\nabla \Phi^u)^{-1}\|_{L^\infty([0, T] \times \T^d)} \| v\|_{L^\infty(\T^d)} 
         \le C_u \tilde \e  \leq \e 
        \end{aligned}
    \end{equation*}
    where, in the second inequality, we used \eqref{eq:norm-v}.     Let now $\bar\rho\in L^{q'}$ be such that 
    $$\left| \int_{\T^d}\varphi(x)\bar\rho(x)\,dx \right|\ge \delta.$$
    Since $w$ is smooth, there exists a unique $\rho \in C_{w}([0,+\infty);L^{q'}(\T^d))$ solution to
     \begin{equation*}
       \left\{\begin{aligned}
            &\partial_t\rho + \div (\rho w)=0,\\
            &\rho_{t=0}=\bar\rho.
       \end{aligned}\right.
    \end{equation*}
    Therefore, by Lemma \ref{lemma:composition_lemma} and the fact that $\rho$ is the \emph{unique} solution, the representation formula  
    \begin{equation}\label{eq:expl_form_dens_ex}
        \rho(t)=X^u(t)_\sharp\tilde\rho(t) \text{ for all } t \in [0,T]
    \end{equation}
    must hold, where $\tilde\rho \in C_{w}([0,+\infty);L^{q'}(\T^d))$ is the unique solution to 
    \begin{equation*}
       \left\{\begin{aligned}
            &\partial_t\tilde\rho + \div (\tilde\rho v)=0,\\
            &\tilde\rho_{t=0}=\bar\rho.
       \end{aligned}\right.
    \end{equation*}
    From \eqref{eq:expl_form_dens_ex}, applying $\Phi^u(t)_\sharp$ on both sides, we get also
    \begin{equation}
        \label{eq:expl_form_dens_ex_inv}
        \Phi^u(t)_\sharp \rho(t) = \tilde \rho(t) \text{ for all } t \in [0,T].
    \end{equation}
    By using the properties of $v$ coming from Lemma \ref{l:bb_auton_zero}
    and the explicit expression \eqref{eq:expl_form_dens_ex_inv} we can estimate
    \begin{equation*}
        \begin{aligned}
            \tilde M < \sup_{s \in [0,\tau]} \| \tilde \rho(s)\|_{L^{q'}} \leq C_u \sup_{s \in [0,\tau]|} \| \rho(s)\|_{L^{q'}}
        \end{aligned}
    \end{equation*}

  \noindent Therefore we can choose $\tilde M>0$ large enough in such a way that
    \begin{equation*}
        \sup_{s\in[0,\tau]}\norm{\rho(s)}_{L^{q'}}\ge \frac{\tilde M}{C_u} > M,
        % c_u \sup_{s\in[0,\tau]}\norm{\tilde{\rho}(s)}_{L^{q'}} >c_u \tilde M >M
    \end{equation*}
    and this concludes the proof of Proposition \ref{prop:main_proposition_existence} and thus also of Theorem \ref{thm:existence-lq}. 
\end{proof}

\section{Residuality of uniqueness}\label{s:residuality_uniqueness}
 To prove Theorem \ref{thm:uniq-resid} we will show that $\X\cap \U_q$ is contained into a countable union of closed sets with empty interior (w.r.t. the topology of $(\X, d_\X)$), as we did in the proof of Theorem \ref{thm:existence-lq}.
The argument will nevertheless be ``more direct'', and thus easier. The proof will be divided into three steps, as in Theorem \ref{thm:existence-lq}: both the first about showing the inclusion into the countable union of sets and the second about their closure will look indeed very similar to Claims 1 and 2 of Theorem \ref{thm:existence-lq}. On the other hand, the proof that these sets have empty interior will follow directly from the assumption that $\U_q\cap \X$ is dense, and no further construction will be needed.

\begin{proof}[Proof of Theorem \ref{thm:uniq-resid}]
    First of all notice that, by linearity and the fact that $\rho \equiv 0$ is always a solution with zero initial datum, we can rewrite $ \X\cap\U_q$ as follows:
    
    \begin{equation}
    \begin{aligned}
        \label{eq:charact-uq}
        \X\cap\U_q=\big\{u\in \X\,:\, \forall\tau\in(0,T],\,\,\exists!\rho \in L^{\infty}_{loc}([0,\tau);L^{q'}(\T^d))\,\,\\\ \text{local solution to \eqref{eq:PDE} with } \rho|_{t=0}=0 \big\}.
    \end{aligned}
    \end{equation}

 We prove that the complement is a first category set in $\X$. 
 
    \medskip
    
    \underline{\textit{Claim 1.}} \textit{
    The following inclusion holds. 
    \begin{equation*}
        \X\cap\mathcal{U}_q^c\subseteq \bigcup_{\tau \in \Q\cap (0,T]}\bigcup_{n\in\N} \bigcup_{k\in\N} F_{\tau,n,k}
    \end{equation*}
    where
    \begin{equation*}
        \begin{aligned}
              F_{\tau,n,k}:=\left\{u\in \mathcal{X}\,:\, \exists \rho\in C_w([0,\tau];L^{q'}(\T^d))\,:\,
	       \text{ local solution to \eqref{eq:PDE} with $\rho|_{t=0}=0$} \right.\\ 
           \text{s.t. }    \left.\norm{\rho}_{L^{\infty}([0,\tau];L^{q'}(\T^d))}\le n
           \,\text{ and }\,d_n(\rho(\tau),0)\ge\frac{1}{k} \right\}.
        \end{aligned}
    \end{equation*}
    and $d_n$ is the distance inducing the weak* topology on $\overline B_n^{q'}$ given by Lemma \ref{lemma:equicont_solutions}.
}

\smallskip
Let $u\in \X \cap \mathcal{U}_q^c$. By \eqref{eq:charact-uq}, there exists a time $\tau'\in(0,T]$ and at least one solution $\rho\in L^{\infty}([0,\tau'];L^{q'}(\T^d))$ that is not identically equal to zero. Clearly there is $n\in \N$ such that $\norm{\rho}_{L^{\infty}_tL^{q'}_x}\le n$. As we observed at the beginning of Section \ref{ss:ex:prelim}, we can assume w.l.o.g. that $\rho\in C_w([0,\tau'];L^{q'}(\T^d))$. Now, since $\rho \not \equiv 0$, there must exist a time $\tau \in (0,\tau']$ such that $\rho(\tau)$ is nonzero.
Therefore, since $\rho(\tau)\in\overline B_n^{q'}$, we can quantify the fact that $\rho(\tau)$ is nonzero by means of the distance $d_n$: more precisely, there must be $k\in\N$ such that $d_n(\rho(\tau),0)\ge 1/k$. Overall we showed that $u \in F_{\tau, n, k}$. 

\medskip

 \underline{\textit{Claim 2.}} \textit{For every $\tau,n,k$ the set
	$F_{\tau,n,k}$ is closed in $\mathcal{X}$.}

\smallskip

    The proof is very similar to that of Claim 2 in Theorem \ref{thm:existence-lq} on page \pageref{claim2-thm-a}. 
	Take $\{u_j\}_{j\in\N}\subseteq F_{\tau,n,k}$ such that $d_\X(u_j, u) \to 0$. Observe that, in particular, by Point \ref{it:distances} in the statement of Theorem \ref{thm:uniq-resid}, $\|u_j - u\|_{L^1_tL^q_x} \to 0$. We want to prove $u\in F_{\tau,n,k}.$
    
   By definition of $F_{\tau, n,k}$, for any $j\in\N$, there are $\bar\rho_j\in L^{q'}(\T^d)$ and $\rho_j\in C_w([0,\tau];L^{q'}(\T^d))$ solving
\begin{equation*}
    \left\{\begin{aligned}
        &\partial_t\rho_j + \div(\rho_ju_j)=0, \quad \text{on }\, \T^d\times [0,\tau]\\
        &\rho_j|_{t=0}=0
    \end{aligned}\right.
\end{equation*}
and satisfying 
$$\norm{\rho_j}_{L^{\infty}([0,\tau];L^{q'}(\T^d))}\le n
           \,\text{ and }\,d_n(\rho_j(\tau),0)\ge\frac{1}{k}.$$
	Arguing as in the proof of Claim 2 of Theorem \ref{thm:existence-lq}, we get a 
     a subsequence $\{\rho_{j_m}\}_{m\in \N}$ such that
    \begin{equation*}
        \rho_{j_m}  \to \rho\,\, \text{ in } C_w([0,\tau]; L^{q'}(\T^d)).
    \end{equation*}
    Both bounds $\norm{\rho}_{L^{\infty}([0,\tau]:L^{q'}(\T^d))}\le n$ and $d_n(\rho(\tau), 0)\ge 1/k$ are preserved by $C_wL^{q'}_x$ convergence.
    The proof that $\rho$ is a local weak solution with initial datum $\rho|_{t=0}=0$ follows immediately from the strong  convergence $u_j \to u$ in $L^1_t L^q_x$ and the weak* convergence $\rho_{j_m} \overset{*}{\rightharpoonup} \rho_j$. 
    
\medskip

\underline{\textit{Claim 3.}} \textit{For every $\tau,n,k$ the set
	$F_{\tau,n,k}$ has empty interior in $\mathcal{X}$.}

\smallskip

This is equivalent to prove that $\X\setminus F_{\tau,n,k}$ is dense in $\X$.
By hypothesis $\X\cap\mathcal{U}_q$ is dense in $\X$. 
Hence, density of $\X\setminus F_{\tau,n,k}$ comes from the following inclusion:
\begin{equation*}
\begin{aligned}
        \X\cap\mathcal{U}_q \subseteq  \X\setminus F_{\tau,n,k}
        =   \bigg\{ u\in\X\,:\,\text{if }\,\rho\in C_w([0,\tau];L^{q'}(\T^d))\,\text{ is a local solution} \\
         \text{to \eqref{eq:PDE} with $\rho|_{t=0} = 0$ and $\norm{\rho}_{L^{\infty}([0,\tau];L^{q'}(\T^d))}\le n$,} 
        \text{ then } d_n(\rho(\tau),0)<\frac{1}{k}&\bigg\}.
\end{aligned}
\end{equation*}
In fact, if $u\in \mathcal{U}_q $, for all $\tau\in [0,T]$ the only local solution on $[0,\tau]$ is $\rho\equiv 0,$. This concludes the proof of the claim and also of  Theorem \ref{thm:uniq-resid}. 
\end{proof}

\section{Density for the continuity equation}
\label{s:density-pde}

In this section we prove the density statement for the PDE, i.e. point (i) of Theorem \ref{thm:density}. The idea of the proof is the following:
\begin{enumerate}
    \item \label{pt:literature} The building block for our construction is provided by the incompressible vector field constructed in \cite{BrueColomboKumar2024}, for which uniqueness of solutions to the associated continuity equation does not hold (in a suitable class of densities).
    \item \label{pt:rescaling} By a simple rescaling argument we show that vector field in Point \eqref{pt:literature} can be made arbitrarily small.
    \item Using Point \eqref{pt:rescaling}, and the construction in Section \ref{s:push-forward}, we deduce that, arbitrarily close  to any smooth vector field, there is a vector field for which uniqueness of solutions to the continuity equation does not hold.
    \item Finally, we use the density of smooth maps in $\Y$ and the previous point to conclude the proof.
\end{enumerate}

\bigskip

We start by recalling the following result from \cite{BrueColomboKumar2024}.
\begin{theorem}[Bru\`e, Colombo, Kumar]
\label{brue_colombo_kumar_2}
    Let Let $q,p$ be as in \eqref{eq:bck-range}.
    % $d\ge 2$, $q\in(1,\infty]$ and $p\in[1,\infty)$ satisfying \eqref{eq:bck-range}.
    There exists an incompressible vector field $\mathrm{v}\in C([0,1],W^{1,p}\cap L^{q}(\R^d,\R^d))$ such that %the transport equation
    \eqref{eq:PDE} admits two distinct (non-negative) solutions $\rho^1, \rho^2 \in C([0,1];L^{q'}(\R^d))$, with the same initial datum. Moreover
    \begin{equation*}
        {\rm supp}\, \mathrm{v}(t, \cdot), \,{\rm supp}, \rho^1 (t,\cdot), \,{\rm supp}\,  \rho^2(t,\cdot) \subseteq Q_1(0)
    \end{equation*}
    for all $t \in [0,1]$.
\end{theorem}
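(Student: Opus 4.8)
The plan is to read this statement as what it essentially is: an elementary rescaling of the main non-uniqueness construction of \cite{BrueColomboKumar2024}, so the only work is to recall that construction and to dilate it in space so that the vector field and both solutions are supported inside $Q_1(0)$.

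First I would recall the output of \cite{BrueColomboKumar2024} in the range \eqref{eq:bck-range}: there exist an incompressible $\tilde{\mathrm v}\in C([0,1];W^{1,p}\cap L^q(\R^d;\R^d))$ and two distinct non-negative weak solutions $\tilde\rho^1,\tilde\rho^2\in C([0,1];L^{q'}(\R^d))$ of $\partial_t\rho+\div(\rho\tilde{\mathrm v})=0$ sharing the same initial datum. The feature we additionally need is that everything is localized in a fixed bounded set: since that construction is assembled from building blocks living at dyadic scales which accumulate at a single point, its support is a bounded neighbourhood of that point, and after translating the point to the origin there is a radius $R>0$ with
\[
{\rm supp}\,\tilde{\mathrm v}(t,\cdot),\ {\rm supp}\,\tilde\rho^1(t,\cdot),\ {\rm supp}\,\tilde\rho^2(t,\cdot)\subseteq B_R(0)\qquad\text{for all }t\in[0,1].
\]

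Next I would fix $\lambda>0$ large enough that $B_{R/\lambda}(0)\subseteq Q_1(0)$ and set
\[
\mathrm v(t,x):=\tfrac1\lambda\,\tilde{\mathrm v}(t,\lambda x),\qquad \rho^i(t,x):=\tilde\rho^i(t,\lambda x)\quad(i=1,2).
\]
This is exactly the scaling under which the continuity equation is invariant: a one-line computation gives $\partial_t\rho^i(t,x)=(\partial_t\tilde\rho^i)(t,\lambda x)$ and $\div(\rho^i\mathrm v)(t,x)=(\div(\tilde\rho^i\tilde{\mathrm v}))(t,\lambda x)$, so each $\rho^i$ solves $\partial_t\rho^i+\div(\rho^i\mathrm v)=0$; similarly $\div\mathrm v(t,x)=(\div\tilde{\mathrm v})(t,\lambda x)=0$, so $\mathrm v$ stays incompressible. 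A spatial dilation only multiplies the relevant norms by fixed powers of $\lambda$, hence $\mathrm v\in C([0,1];W^{1,p}\cap L^q)$ and $\rho^i\in C([0,1];L^{q'})$ still hold; the two solutions remain distinct, non-negative, and share the (rescaled) initial datum; and now all three supports sit inside $B_{R/\lambda}(0)\subseteq Q_1(0)$, which is what we wanted.

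There is no genuine obstacle here beyond bookkeeping. The only point worth stating carefully is the claim that the \cite{BrueColomboKumar2024} example can be taken with supports in a fixed bounded set, which is why I would include the remark above about the scale-by-scale structure of that construction (accumulation at one point, hence bounded support after a translation). Everything else --- the invariance of the continuity equation and of the condition $\div\mathrm v=0$ under $x\mapsto\lambda x$, and the stability of all the function-space memberships --- is routine.
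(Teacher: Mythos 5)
Your proposal is correct and matches the paper's treatment: the paper does not prove this statement but simply recalls it from \cite{BrueColomboKumar2024}, with the support normalization in $Q_1(0)$ taken as part of the cited construction (which is indeed localized in a bounded set). The dilation $\mathrm v(t,x)=\lambda^{-1}\tilde{\mathrm v}(t,\lambda x)$, $\rho^i(t,x)=\tilde\rho^i(t,\lambda x)$ that you use to normalize the support is exactly the scaling the paper itself employs immediately afterwards in its Proposition \ref{prop:bb_nonuniqu_continuity_sobolev}, so there is nothing to object to.
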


We want to properly rescale and periodize the vector field and the corresponding solutions to obtain a family of vector fields with the non-uniqueness property that are moreover arbitrarily small in $L^{\infty}_t(L^q\cap W^{1,p})_x$ (and thus also in $L^1_t(L^q\cap W^{1,p})_x$). 

\begin{proposition}[Rescaled version of Theorem \ref{brue_colombo_kumar_2}]
\label{prop:bb_nonuniqu_continuity_sobolev}
    Let $q,p$ be as in \eqref{eq:bck-range}. For any $\varepsilon>0$ there exists an incompressible vector field $v\in C([0,1];L^{q}\cap W^{1,p}(\T^d;\R^d))$ such that 
    \begin{equation*}
        \norm{v}_{L^{\infty}_t(L^q\cap W^{1,p})_x}<\varepsilon
    \end{equation*}
    and $v$ admits two distinct (non-negative) distributional solutions to \eqref{eq:PDE} in the class $C([0,1];L^{q'}(\T^d;\R))$.
\end{proposition}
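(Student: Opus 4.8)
The plan is to obtain $v$, together with its two non‑uniqueness‑witnessing solutions, by rescaling \emph{only in the space variable} the objects furnished by Theorem \ref{brue_colombo_kumar_2} and then periodizing, exploiting crucially that $\mathrm v(t,\cdot)$, $\rho^1(t,\cdot)$ and $\rho^2(t,\cdot)$ are all supported in $Q_1(0)$ for every $t\in[0,1]$. Concretely, for a large parameter $\lambda>1$ I would set, on the fundamental cube $Q_1(0)$,
\begin{equation*}
  v(t,x):=\tfrac1\lambda\,\mathrm v(t,\lambda x),\qquad \rho^i_\lambda(t,x):=\rho^i(t,\lambda x)\quad(i=1,2),
\end{equation*}
and extend these periodically to $\T^d$. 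Since the supports of the original fields shrink to $Q_{1/\lambda}(0)$, whose closure is contained in $Q_1(0)$, the periodic extensions are well defined and continuous on $\T^d$, and they inherit the continuity in time of $\mathrm v$ and $\rho^i$.

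Next I would verify the smallness. A change of variables $y=\lambda x$ gives, for each $t\in[0,1]$,
\begin{equation*}
  \|v(t,\cdot)\|_{L^q(\T^d)}=\lambda^{-1-d/q}\|\mathrm v(t,\cdot)\|_{L^q(\R^d)},\qquad \|v(t,\cdot)\|_{L^p(\T^d)}=\lambda^{-1-d/p}\|\mathrm v(t,\cdot)\|_{L^p(\R^d)},
\end{equation*}
while the gradient is \emph{not} amplified, since $\nabla_x v(t,x)=(\nabla\mathrm v)(t,\lambda x)$ and hence $\|\nabla v(t,\cdot)\|_{L^p(\T^d)}=\lambda^{-d/p}\|\nabla\mathrm v(t,\cdot)\|_{L^p(\R^d)}$. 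As $\sup_{t\in[0,1]}\big(\|\mathrm v(t,\cdot)\|_{L^q(\R^d)}+\|\mathrm v(t,\cdot)\|_{W^{1,p}(\R^d)}\big)<\infty$ by continuity on the compact interval $[0,1]$, all three quantities on the left can be made $<\varepsilon$ by choosing $\lambda$ large; this fixes $\lambda$ and yields $\|v\|_{L^\infty_t(L^q\cap W^{1,p})_x}<\varepsilon$. Incompressibility is preserved, since $\div_x v(t,x)=(\div\mathrm v)(t,\lambda x)=0$, and the analogous change of variables shows $\rho^i_\lambda\in C([0,1];L^{q'}(\T^d))$, still non‑negative.

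Finally I would check that $\rho^1_\lambda,\rho^2_\lambda$ are distinct distributional solutions of \eqref{eq:PDE} with field $v$ sharing the same initial datum. Distinctness and the coincidence of initial data are immediate from the corresponding facts for $\rho^1,\rho^2$, since $x\mapsto\lambda x$ is a bijection of $Q_1(0)$. For the weak formulation, given $\varphi\in C^1_c([0,1)\times\T^d)$, I would first multiply $\varphi$ by a cutoff $\chi\in C^\infty_c(Q_1(0))$ with $\chi\equiv1$ on $\overline{Q_{1/\lambda}(0)}$ — this changes none of the integrals in \eqref{eq:def-weak-sln}, because $\rho^i_\lambda(t,\cdot)$, $v(t,\cdot)$ and the initial datum all vanish outside $Q_{1/\lambda}(0)$ — so that $\chi\varphi$ may be regarded as a test function on $[0,1)\times\R^d$; then the substitution $y=\lambda x$, together with the identities $\partial_t\psi(t,y)=\partial_t(\chi\varphi)(t,y/\lambda)$ and $\nabla_y\psi(t,y)=\tfrac1\lambda\big(\nabla(\chi\varphi)\big)(t,y/\lambda)$ for $\psi(t,y):=(\chi\varphi)(t,y/\lambda)$, turns the torus weak formulation for $(v,\rho^i_\lambda)$ into $\lambda^{-d}$ times the $\R^d$ weak formulation for $(\mathrm v,\rho^i)$ tested against $\psi$, which vanishes by Theorem \ref{brue_colombo_kumar_2}. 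The argument is essentially bookkeeping; the only point requiring care is this last reduction — correctly cutting off $\varphi$ and tracking the harmless constant $\lambda^{-d}$ when passing between the weak formulations on $\T^d$ and on $\R^d$.
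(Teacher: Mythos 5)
Your proposal is correct and follows essentially the same route as the paper: a pure spatial concentration rescaling $v(t,x)=\lambda^{-1}\mathrm v(t,\lambda x)$, $\rho^i_\lambda(t,x)=\rho^i(t,\lambda x)$, followed by periodization using the support condition, with the same norm-scaling computations as in \eqref{eq:scaling_concentration}. The only difference is that you spell out the verification of the weak formulation (cutoff plus change of variables), which the paper leaves implicit; this is fine and adds no new idea.
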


\begin{proof}

% Let $q$ and $p$ be fixed and satisfying \eqref{eq:bck-range}. 
Let $\mathrm{v} \in C([0,T],W^{1,p}\cap L^{q}(\R^d,\R^d))$ be given by Theorem \ref{brue_colombo_kumar_2}, with the two densities $\rho^1, \rho^2$ satisfying 
\begin{equation*}
\rho^1|_{t=0} = \rho^2|_{t=0}, \qquad \rho^1 \not \equiv \rho^2. 
\end{equation*}
Let $\mu>1$ be a  \emph{concentration} parameter to be fixed later, depending on $\e$. 
Set
\begin{equation*}
        \mathtt v_{\mu}(t,x):=\frac{1}{\mu}\mathtt v(t,\mu x),\qquad \rho^i_{\mu}(t,x):=\rho^i(t,\mu x), \quad i=1,2.
\end{equation*}
The scaling guarantees that $\mathtt v_{\mu}$ and $\rho^i_\mu$, $i=1,2$, still solve \eqref{eq:PDE} with the (same) rescaled initial datum $\rho^i_{0,\mu}(x)=\rho^i_0(\mu x)$, but $\rho^1_\mu \not \equiv \rho^2_\mu$.
A simple computation shows that
 \begin{equation}
 \label{eq:scaling_concentration} 
     \begin{aligned}
            \norm{\mathtt v_{\mu}}_{L^{\infty}_tL^q_x}=\frac{1}{\mu^{1+\frac{d}{q}}}\norm{\mathtt v}_{L^{\infty}_tL^q_x},\qquad
                 \norm{\nabla \mathtt v_{\mu}}_{L^{\infty}_tL^p_x}=\frac{1}{\mu^{\frac{d}{p}}}\norm{\nabla \mathtt v}_{L^{\infty}_tL^p_x}.
    \end{aligned}
 \end{equation}
Moreover the supports of $\mathtt v_\mu$, $\rho^1_\mu$, $\rho^2_\mu$ are contained in $Q_{\frac{1}{\mu}}(0)$. We can thus extend them by periodicity on $\T^d$. By picking $\mu$ large enough (depending on $\varepsilon$) we get the conclusion. 
\end{proof}

\subsection{Proof of Point (i) of Theorem \ref{thm:density}}
\label{ss:proof-i}
    We can assume w.l.o.g. that $T=1$. The general case follows from the case $T=1$ by a simple rescaling argument $t \mapsto t/T$, $u \mapsto u/T$.
 
    By density of smooth maps in $\Y$
    % $\{u\in L^1_tL^q_x\,;\,\div u =0\}$ ,
    it is enough to show that for every $u\in C^{\infty}([0,1]\times \T^d;\R^d)$  with $\div u=0$ and for every $\e>0$, there exists another incompressible vector field $w\in C([0,1];L^q\cap W^{1,p}(\T^d;\R^d))$, with $q$ and $p$ as in \eqref{eq:bck-range}, such that 
    \begin{equation*}
        \norm{u-w}_{L^{\infty}_t(L^q\cap W^{1,p})_x} < \varepsilon
    \end{equation*}
    and $w$ admits two distinct solutions to \eqref{eq:PDE} in the class $L^{\infty}_tL^{q'}_x$ starting from the same initial datum.\\
     By Proposition \ref{prop:bb_nonuniqu_continuity_sobolev}, we can find an incompressible vector field $v\in C_t(L^q\cap W^{1,p})_x$ admitting two distinct solutions $\rho^1,\rho^2\in C_tL^{q'}_x$ to \eqref{eq:PDE} starting from the same initial datum $\rho_0\in L^{q'}$ and such that $\|v\|_{L^{\infty}_t(L^q\cap W^{1,p})_x}$ can be made arbitrarily small. In particular, we can choose $v$ such that
    \begin{equation}
        \label{eq:smallness-perturb}
        \left\| (\nabla\Phi^u(t,x))^{-1}v(t,\Phi^u(t,x)) \right\|_{L^{\infty}_t(L^q\cap W^{1,p})_x} < \varepsilon. 
    \end{equation}
    Set
    \begin{equation*}
        w(t,x):=u(t,x) + (\nabla\Phi^u(t,x))^{-1}v(t,\Phi^u(t,x))
    \end{equation*}
    Since $\div u = 0$, by Remark \ref{rmk:div-u-equal-div-w} the vector field $w$ is incompressible as well. Moreover, by \eqref{eq:smallness-perturb} it holds $\norm{u-w}_{L^{\infty}_t(L^q\cap W^{1,p})_x} < \varepsilon$.
    Notice now that, thanks to item (i) of Lemma \ref{lemma:composition_lemma} and the fact that $X^u(t, \cdot)$ is a diffeomorphism, the densities $$\tilde \rho^i(t,x):=X^u(t, \cdot)_\sharp \rho^i(t, \cdot), \quad i=1,2$$
    are two distinct distributional solutions to the continuity equation with vector field $w$ and with the same initial datum, so that $w$ has all the desired properties. \qed

%%%%%%%%%%%%%%%%%%%%%% DENSITY ODE %%%%%%%%%%%%%%%%%%%%%%%%%%%%

\section{Density for the ODE}
\label{s:density-ode}

In this section we prove the density statement for the ODE, namely Point (ii) of Theorem \ref{thm:density}.
The strategy of the proof is similar to the one described at the beginning of Section \ref{s:density-pde} for the density statements for the PDE, with one substantial difference (Point \eqref{pt:rescaling} below):
\begin{enumerate}
    \item \label{pt:literature-ode} In \cite{kumar2023} the author constructs (for every $p<d$ and $\alpha < 1$) an incompressible vector field $v$ belonging to the space
    \begin{equation*}
        % \label{eq:regularity-kumar}
        C_t W^{1,p}_x \cap C^\alpha_{t,x}
    \end{equation*}
    such that $A_v$ as defined in \eqref{eq:non-uniq-ode} has full measure.
    \item \label{pt:rescaling} As in Section \ref{s:density-pde}, we need to produce arbitrarily small 
    (in the desired topology, namely $C_t W^{1,p}_x \cap C_{t,x}$) 
    vector fields $u$ having the property that $A_{u}$ is a full measure set. 
    However, differently than the analysis in Section \ref{s:density-pde}, here rescaling arguments do not work. 
    Indeed, rescaling ``by oscillation'' (i.e. as in Lemma \ref{l:rescaling-oscillation}) would preserve the property of a.e. non uniqueness of integral curves but would not make the rescaled vector field small in $W^{1,p}$, whereas 
    rescaling ``by concentration'' (as we did for the PDE in Section \ref{s:density-pde}) would achieve the smallness in the desired topology but would not preserve the a.e. non uniqueness property.
    Nevertheless, we show how to easily adapt the construction in \cite{kumar2023} to obtain the desired smallness. 
    \item Finally, as in Section \ref{s:density-pde}, we combine Point \eqref{pt:rescaling},  the construction in Section \ref{s:push-forward} and the density of smooth maps in $\mathcal{Z}$ to conclude the proof of the density. 
\end{enumerate}

\bigskip

For the flow map associated to a (smooth) vector field $u \in C^\infty([0,T] \times \T^d)$, we used so far the notation $X^u(t,x)$ (see \eqref{eq:flow-map}). In what follows, it will be useful to adopt, when needed, also the following (standard) notation for flow maps, where the \emph{starting time} is explicitly written: we indicate by $X^u:[0,T]\times [0,T]\times \T^d\to \T^d$ the solution of the ODE
\begin{equation*}
\left\{
    \begin{aligned}
         \partial_t X^u(t,s,x)&=u(t,X^u(t,s,x)),\\
         X^u(s,s,x)&=x,
    \end{aligned}\right.
\end{equation*}
and we call it \emph{the flow with starting time} associated to $u$. This notation is useful to exploit the semigroup property: 
\begin{equation}
    \label{eq:semigroup}
    X^u(t_3,t_1, \cdot) = X^u(t_3,t_2, \cdot)\circ X^u(t_2,t_1, \cdot) \quad  \forall \, t_1, t_2, t_3 \in [0,T] \text{ and } x \in \T^d. 
\end{equation}
Clearly $X^u(t,x)$ as defined in \eqref{eq:flow-map} is just $X^u(t,x)=X^u(t,0,x)$, i.e. the flow with starting time $s =0$.

%%%%%%%%%%%%%%%%%%%%%%%%%%%%%%%%%%%%%%%%%%%%%%%%%%
\subsection{Some preliminary observations}

We begin by some simple observations concerning extension of integral curves up to a possible singular time of the corresponding vector field.

\begin{lemma}\label{lemma:extension_flows}
    Let $v\in C^{\infty}([0,T)\times \T^d)\cap L^{\infty}([0,T]\times \T^d)$ and consider the associated (smooth) flow map with starting time
    \begin{equation*}
        X^v : [0,T)  \times [0,T) \times \T^d \to \T^d.
    \end{equation*}
    \begin{enumerate}[label=(\roman*)]
        \item \label{pt:extension} $X^v$ can be extended by continuity to a map
        \begin{equation*}
            X^v : [0,T] \times [0,T) \times \T^d \to \T^d 
        \end{equation*}
        \item \label{pt:convergence} $X^v(t, \cdot, \cdot) \to X(T, \cdot, \cdot)$ uniformly (in the variables $(s,x)$) as $t \nearrow T$. 
    
        \item \label{pt:semigroup} The semigroup property \eqref{eq:semigroup} holds also at time $t_3 = T$, i.e.
        \begin{equation}
        \label{eq:semigroup-extension}
            X^u(T,t_1, x) = X^u(T,t_2, x)\circ X^u(t_2,t_1, x), \quad  \forall  \,  t_1, t_2 \in [0,T) \text{ and } x \in \T^d. 
        \end{equation}        
    \end{enumerate}
\end{lemma}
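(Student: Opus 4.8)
The statement is a standard fact about extending a flow up to a singular time when the velocity field stays uniformly bounded, and the three points are proved in sequence, each feeding into the next.

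\emph{Point \ref{pt:extension}.} The key observation is that, since $\norm{v}_{L^\infty([0,T]\times\T^d)} \leq C$ for some finite $C$, every integral curve is uniformly Lipschitz in time: for $t_1, t_2 \in [0,T)$ and any fixed $(s,x)$,
\begin{equation*}
    d_{\T^d}\big(X^v(t_1,s,x), X^v(t_2,s,x)\big) \leq \int_{\min(t_1,t_2)}^{\max(t_1,t_2)} |v(\sigma, X^v(\sigma,s,x))|\,d\sigma \leq C\,|t_1 - t_2|.
\end{equation*}
Hence, for each fixed $(s,x)$, the map $t \mapsto X^v(t,s,x)$ is uniformly continuous on $[0,T)$, so it extends by continuity to $t = T$; I define $X^v(T,s,x)$ to be this limit. (On the flat torus, which is a complete metric space, Cauchy sequences converge, so the limit exists.)

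\emph{Point \ref{pt:convergence}.} The Lipschitz bound above is uniform in $(s,x)$: for every $(s,x)$ and every $t \in [0,T)$,
\begin{equation*}
    d_{\T^d}\big(X^v(t,s,x), X^v(T,s,x)\big) \leq \int_t^T |v(\sigma, X^v(\sigma,s,x))|\,d\sigma \leq C\,(T-t),
\end{equation*}
which goes to $0$ as $t \nearrow T$ \emph{independently} of $(s,x)$. This is exactly uniform convergence of $X^v(t,\cdot,\cdot)$ to $X^v(T,\cdot,\cdot)$ on $[0,T)\times\T^d$. (It also shows, together with joint continuity of $X^v$ on $[0,T)\times[0,T)\times\T^d$, that the extended map $X^v : [0,T]\times[0,T)\times\T^d \to \T^d$ is jointly continuous, since a uniform limit of continuous functions is continuous.)

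\emph{Point \ref{pt:semigroup}.} For $t_1, t_2 \in [0,T)$ fixed and any $t_3 \in [0,T)$, the semigroup identity \eqref{eq:semigroup} gives
\begin{equation*}
    X^v(t_3,t_1,x) = X^v\big(t_3,t_2, X^v(t_2,t_1,x)\big).
\end{equation*}
Now let $t_3 \nearrow T$. The left-hand side converges to $X^v(T,t_1,x)$ by the definition of the extension in Point \ref{pt:extension}. For the right-hand side, set $y := X^v(t_2,t_1,x) \in \T^d$, which is a fixed point; then $X^v(t_3,t_2,y) \to X^v(T,t_2,y)$, again by the definition of the extension. Passing to the limit yields
\begin{equation*}
    X^v(T,t_1,x) = X^v\big(T,t_2,X^v(t_2,t_1,x)\big),
\end{equation*}
which is \eqref{eq:semigroup-extension}. (The typo in the display — writing $X^u(T,t_2,x)\circ X^u(t_2,t_1,x)$, i.e. ``$\circ$'' applied to points rather than ``evaluated at'' — is harmless; the intended meaning is composition of the flow maps $X^v(T,t_2,\cdot)$ and $X^v(t_2,t_1,\cdot)$, which is what the computation above proves.)

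\emph{Main obstacle.} There is no serious obstacle here — this is a soft compactness/continuity argument. The only point requiring a little care is making sure the Lipschitz-in-time estimate is genuinely uniform over all starting points $(s,x)$, which it is, because the bound depends only on $\norm{v}_{L^\infty}$ and the length of the time interval, not on the trajectory. All of the rest is passing to limits in already-established identities for smooth flows. Note that smoothness of $v$ on $[0,T)\times\T^d$ (away from the possible singular time $T$) is what guarantees that $X^v$ is well-defined, smooth, and satisfies the semigroup property on $[0,T)$ to begin with; the extension to $t=T$ is then purely a consequence of the uniform $L^\infty$ bound.
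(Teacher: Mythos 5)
Your proof is correct and follows essentially the same route as the paper: both rest on the uniform-in-$(s,x)$ Lipschitz estimate $|X^v(t_2,s,x)-X^v(t_1,s,x)|\leq \norm{v}_{L^\infty}|t_2-t_1|$ to extract the uniform limit at $t=T$, and both obtain the semigroup identity at $t_3=T$ by passing to the limit in the identity for $t_3<T$ (you use pointwise convergence at the fixed point $X^v(t_2,t_1,x)$, the paper invokes the uniform convergence of Point (ii); either suffices). No gaps.
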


\begin{remark}
    We will still call $X^v : [0,T] \times \T^d$ \emph{flow map with starting time} associated to $v$. However, $X^v(T,s, \cdot) : \T^d \to \T^d$ is not anymore, in general, a diffeomorphism.   
\end{remark}

%%%%%%%%%%%%%%%%%%%%%%%%%%%%%%%%%%%%%%%%%%%%%%%%%%
\begin{proof}
We observe first of all that the family $\{X^v(t, \cdot, \cdot)\}_{t \in [0,T)}$ as a family of functions of the variables $(s,x) \in [0,T) \times \T^d$ is a Cauchy sequence (as $t \to T$) w.r.t. the uniform topology: indeed
\begin{equation*}
    |X^v(t_2, s,x) - X^v(t_1,s,x) | \leq \|v\|_{L^\infty([0,T] \times \T^d)} |t_2 - t_1|
\end{equation*}
and thus also
\begin{equation*}
    \sup_{(s,x) \in [0,T) \times \T^d}|X^v(t_2, s,x) - X^v(t_1,s,x) | \leq \|v\|_{L^\infty([0,T] \times \T^d)} |t_2 - t_1|.
\end{equation*}
Let us denote by $X^v(T, \cdot, \cdot) : [0,s) \times \T^d \to \T^d$ the uniform limit of the sequence $\{X^v(t,\cdot,\cdot)\}_{t \in [0,T)}$ as $t \to T$.
Clearly $X^v : [0,T] \times [0,T) \times \T^d \to \T^d$ (as a map of three variables) is continuous. This concludes the proof of Points 
\ref{pt:extension} and \ref{pt:convergence}.
Point \ref{pt:semigroup} follows from the semigroup property \eqref{eq:semigroup}, which holds for all $t_3 \in [0,T)$, passing to the limit $t_3 \nearrow T$ and using Point \ref{pt:convergence}.  
\end{proof}
%%%%%%%%%%%%%%%%%%%%%%%%%%%%%%%%%%%%%%%%%%%%%%%%%
\begin{lemma}
\label{lemma:composition_up_to_time_1}
    Let $u\in C^{\infty}([0,T]\times T^d)$ and $v\in C^{\infty}([0,T)\times \T^d)\cap L^{\infty}([0,T]\times \T^d)$.
    Consider the vector field $w\in C^{\infty}([0,T)\times \T^d)\cap L^{\infty}([0,T]\times \T^d)$ defined as in \eqref{eq:push_forward} by
    \begin{equation*}
         w(t,x):=u(t,x)+\left(\nabla \Phi^u(t,x)\right)^{-1}v(t,\Phi^u(t,x)).
    \end{equation*}
    Then the flow map of $w$, $X^w:[0,T]\times \T^d\to \T^d$
    \begin{equation}\label{eq:extensions_flows}
        X^w(t)=X^u(t)\circ X^v(t) \qquad \forall t\in[0,T].
    \end{equation}
\end{lemma}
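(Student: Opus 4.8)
The plan is to reduce the statement \eqref{eq:extensions_flows} to the already-established composition formula for smooth flows (Lemma \ref{lemma:composition_lemma} and the Remark after it), and then to pass to the limit $t \nearrow T$ using Lemma \ref{lemma:extension_flows}. The point is that $w$ is smooth only on $[0,T) \times \T^d$, so we cannot apply the smooth composition lemma directly on all of $[0,T]$, but we can apply it on $[0,T)$ and then use the uniform extension to $t = T$.

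Concretely, the first step is to restrict attention to the time interval $[0,T')$ for an arbitrary $T' < T$. On $[0,T']$ all three vector fields $u$, $v$, $w$ are smooth and bounded, and $w$ is exactly the field obtained from $u$ and $v$ by the formula \eqref{eq:push_forward}. Hence the Remark following Lemma \ref{lemma:composition_lemma} applies verbatim on $[0,T']$ and gives
\begin{equation*}
    X^w(t,x) = X^u(t, X^v(t,x)) \qquad \text{for all } x \in \T^d, \ t \in [0,T').
\end{equation*}
Since $T' < T$ was arbitrary, this identity holds for every $t \in [0,T)$. The second step is to take the limit $t \nearrow T$. By Lemma \ref{lemma:extension_flows}\ref{pt:convergence} applied to $v$ (note $v \in C^\infty([0,T) \times \T^d) \cap L^\infty([0,T] \times \T^d)$, so the hypotheses are met), we have $X^v(t, \cdot) \to X^v(T, \cdot)$ uniformly on $\T^d$; likewise, applying the same lemma to $w$ (which also lies in $C^\infty([0,T) \times \T^d) \cap L^\infty([0,T]\times\T^d)$ by hypothesis), $X^w(t, \cdot) \to X^w(T, \cdot)$ uniformly. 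For the right-hand side we use that $u$ is smooth on all of $[0,T] \times \T^d$, so $X^u(T, \cdot)$ is continuous (indeed smooth), hence uniformly continuous on the compact torus; composing a uniformly convergent sequence $X^v(t,\cdot) \to X^v(T,\cdot)$ with the uniformly continuous map $X^u(T, \cdot)$ shows $X^u(T, X^v(t,x)) \to X^u(T, X^v(T,x))$ uniformly in $x$. Combining, we pass to the limit in the displayed identity and obtain $X^w(T,x) = X^u(T, X^v(T,x))$, which is \eqref{eq:extensions_flows} at $t = T$.

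A minor technical point worth spelling out is the continuity of $X^u(t, X^v(t,x))$ in $t$ up to $t = T$: one should note $X^u(t, y) \to X^u(T,y)$ uniformly in $y$ as $t \nearrow T$ (again by Lemma \ref{lemma:extension_flows}\ref{pt:convergence}, or simply because $u$ is globally smooth and bounded), so in fact both the inner argument and the outer map converge uniformly, and the composition converges uniformly; no subtlety with the order of limits arises. Thus \eqref{eq:extensions_flows} holds for all $t \in [0,T]$.

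I do not expect a genuine obstacle here: the statement is essentially a continuity/limiting-argument wrapper around the smooth composition formula already proven. The only thing requiring a little care is making sure the limiting argument is clean — specifically, that one uses the uniform (not merely pointwise) convergence of $X^v(t,\cdot)$ together with the uniform continuity of $X^u(T,\cdot)$ to conclude uniform convergence of the composition, and that the limit $X^w(T,\cdot)$ is indeed the continuous extension supplied by Lemma \ref{lemma:extension_flows}. Once these are noted, the proof is a few lines.
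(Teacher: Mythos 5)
Your proposal is correct and follows essentially the same route as the paper: apply the smooth composition formula of Lemma \ref{lemma:composition_lemma} on $[0,T)$, then pass to the limit $t \nearrow T$ using the continuous extensions from Lemma \ref{lemma:extension_flows} and the smoothness of $X^u$ up to time $T$. Your treatment of the double limit (uniform convergence of both the inner argument and the outer map) is if anything slightly more explicit than the paper's.
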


\begin{remark}
    The flow map $X^u$ is well defined for all $t \in [0,T]$ because $u$ is smooth up to time $t=T$. The maps $X^v, X^w$ are well defined also at time $t=T$
    for all $t\in [0,T]$, by the previous lemma. 
\end{remark}
%%%%%%%%%%%%%%%%%%%%%%%%%%%%%%%%%%%%%%%%%%%%
\begin{proof}
    For any $t<T$ the flow map of $v$ satisfies identity \eqref{eq:extensions_flows} thanks to Lemma \ref{lemma:composition_lemma}.
    By Lemma \ref{lemma:extension_flows}, we can continuously extend up to time $t=T$ both $X^v(t)$ and $X^w(t)$ by continuity. 
    % to flow maps of $v$ and $w$ respectively.
    Finally, formula \eqref{eq:extensions_flows} holds also at time $t=T$ because
    \begin{equation*}
        X^w(T, x) = \lim_{t\nearrow T} X^w(t,x)=X^u(T,\lim_{t\nearrow T}X^v(t,x))=X^u(T, X^v(T,x)) \quad \forall x\in \T^d
    \end{equation*}
    by smoothness of $X^u$ on $[0,T]\times \T^d$.
 \end{proof}

%%%%%%%%%%%%%%%%%%%%%%%%%%%%%%%%%%%%%%%%%%%%%%%%%%%

\subsection{Strategy for non uniqueness}

We briefly describe here the strategy developed by Kumar in \cite{kumar2023} to produce vector fields for which uniqueness of characteristics fails on a set of full measure. 
Let 
$$\bar v\in C^\infty([0,T) \times \T^d) \cap  C([0,T];W^{1,p}\cap C(\T^d;\R^d))$$
be a given incompressible vector field.
Observe that, by Lemma \ref{lemma:extension_flows}, the (smooth) flow map $X^{\bar v}$ associated to $\bar v$ can be extended by continuity up to the singular time $t=T$. Denote by $v$ the time-reversed vector field:
$$v(t,x):=-\overline v(T-t,x).$$ 
Observe that 
$$v \in C^\infty((0,T] \times \T^d) \cap  C([0,T];W^{1,p}\cap C(\T^d;\R^d)).$$
Recall again the definition of the set $A_{v}$ in \eqref{eq:non-uniq-ode}.

\begin{lemma}
% [Kumar \cite{kumar2023}]
\label{lemma:nonuniq_argument} 
    Assume there exists a set $C\subset \T^d$ of null (Lebesgue) measure such that
    \begin{equation}
    \label{eq:strategy-1}
        X^{\bar v}(T,C) = \T^d. 
    \end{equation}
    Then the set $A_{v}$ has full measure.
\end{lemma}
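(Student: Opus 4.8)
The plan is to transfer non-uniqueness of the $v$--ODE issuing from time $t=0$ into non-injectivity of the limiting flow map $\mathcal{T}:=X^{\bar v}(T,0,\cdot):\T^d\to\T^d$, which is well defined and continuous by Lemma~\ref{lemma:extension_flows}, and then to exploit the incompressibility of $\bar v$ together with the hypothesis $X^{\bar v}(T,C)=\mathcal{T}(C)=\T^d$, $|C|=0$, to deduce that $\mathcal{T}$ fails to be injective over a full-measure set of target points.

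\emph{A time-reversal dictionary.} Fix $x\in\T^d$. Given $y\in\mathcal{T}^{-1}(x)$, define $\gamma_y(t):=X^{\bar v}(T-t,0,y)$ for $t\in[0,T]$, using the continuous extension of Lemma~\ref{lemma:extension_flows}. Since $s\mapsto X^{\bar v}(s,0,y)$ solves the integral equation for $\bar v$ on $[0,T)$ and $\bar v$ is bounded, the integral identity survives the limit $s\nearrow T$, and a change of variables $s=T-t$ shows that $\gamma_y$ solves, in integral sense, $\gamma_y'(t)=-\bar v(T-t,\gamma_y(t))=v(t,\gamma_y(t))$ with $\gamma_y(0)=\mathcal{T}(y)=x$. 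Conversely, any integral solution $\gamma$ of the $v$--ODE starting at $x$ time-reverses to an integral solution of the $\bar v$--ODE equal to $x$ at time $T$; restricted to $[0,T)$, where $\bar v$ is smooth, uniqueness for the non-singular ODE forces it to coincide with $s\mapsto X^{\bar v}(s,0,\gamma(T))$, and evaluating at $s=T$ gives $\gamma(T)\in\mathcal{T}^{-1}(x)$ with $\gamma=\gamma_{\gamma(T)}$. Hence $y\mapsto\gamma_y$ is a bijection between $\mathcal{T}^{-1}(x)$ and the set of $v$--solutions issuing from $x$; since $\gamma_y(T)=X^{\bar v}(0,0,y)=y$, distinct preimages yield distinct solutions, so $\{x\in\T^d:\#\mathcal{T}^{-1}(x)\ge 2\}\subseteq A_v$.

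\emph{The measure-theoretic step.} For $t<T$ the map $X^{\bar v}(t,0,\cdot)$ is a measure-preserving diffeomorphism, because $\bar v$ is incompressible; thus $\int_{\T^d}\varphi(X^{\bar v}(t,0,x))\,dx=\int_{\T^d}\varphi(x)\,dx$ for every $\varphi\in C(\T^d)$. Letting $t\nearrow T$ and using the uniform convergence $X^{\bar v}(t,0,\cdot)\to\mathcal{T}$ from Lemma~\ref{lemma:extension_flows}, the left-hand side tends to $\int_{\T^d}\varphi(\mathcal{T}(x))\,dx$, so $\mathcal{T}_\sharp\mathcal{L}=\mathcal{L}$, where $\mathcal{L}$ is Lebesgue measure on $\T^d$. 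Now, after enlarging $C$ to a Borel null set (which preserves $\mathcal{T}(C)=\T^d$), the set $G:=\mathcal{T}(\T^d\setminus C)$ is analytic, hence Lebesgue measurable. If $x\in G$, then $x=\mathcal{T}(y)$ for some $y\notin C$ and, since $\mathcal{T}(C)=\T^d$, also $x=\mathcal{T}(c)$ for some $c\in C$; as $y\neq c$ we get $\#\mathcal{T}^{-1}(x)\ge2$, so $G\subseteq A_v$ by the previous step. On the other hand $\mathcal{T}^{-1}(G)\supseteq\T^d\setminus C$, whence, by $\mathcal{T}_\sharp\mathcal{L}=\mathcal{L}$ (which extends to analytic sets), $\mathcal{L}(G)=\mathcal{L}(\mathcal{T}^{-1}(G))\ge\mathcal{L}(\T^d\setminus C)=1$. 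Therefore $A_v$ contains the full-measure set $G$, i.e. $A_v$ has full measure.

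The step requiring the most care is the time-reversal dictionary: one must check that the reversed curves $\gamma_y$ are genuine integral solutions of the $v$--ODE across the singular time $t=0$, and that, conversely, \emph{every} solution issuing from $x$ is of this form — this is precisely where smoothness of $\bar v$ on $[0,T)$ and uniqueness for the non-singular ODE are used to exclude branching before the singular time. The measure-theoretic step is then short, once one recognizes that ``concentrating a full cube onto a Lebesgue-null set'', namely $X^{\bar v}(T,C)=\T^d$ with $|C|=0$, is exactly the obstruction to injectivity of the limit map $\mathcal{T}$.
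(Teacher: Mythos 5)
Your proof is correct, but it takes a genuinely different route from the paper's. The paper argues entirely on the side of the reversed field $v$: it invokes DiPerna--Lions theory to obtain the (unique, incompressible) regular Lagrangian flow $X$ of $v$ and then bounds $A_v^c \subseteq X(T)^{-1}(C)\cup\{x : X(\cdot,x)\ \text{is not an integral curve of}\ v\}$, both pieces being null by the compressibility estimate $X(T)_\sharp\mathcal{L}^d\le L\,\mathcal{L}^d$ and by the very definition of regular Lagrangian flow. You instead stay entirely on the side of $\bar v$: you show that the limit map $\mathcal{T}=X^{\bar v}(T,0,\cdot)$ is measure preserving (passing to the limit in Liouville's theorem for the smooth flows $X^{\bar v}(t,0,\cdot)$, $t<T$, via the uniform convergence of Lemma \ref{lemma:extension_flows}), and that the hypothesis $\mathcal{T}(C)=\T^d$ with $|C|=0$ forces $\mathcal{T}$ to be at least two-to-one over the set $G=\mathcal{T}(\T^d\setminus C)$, which has full measure since $\mathcal{T}^{-1}(G)\supseteq\T^d\setminus C$; each pair of distinct preimages produces two distinct reversed integral curves because $\gamma_y(T)=y$. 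Your version is more elementary and self-contained --- it needs no DiPerna--Lions machinery, only classical uniqueness for the smooth field $\bar v$ on $[0,T)$ and a soft limiting argument --- at the price of a measurability technicality (the image $\mathcal{T}(\T^d\setminus C)$ is a priori only analytic), which you correctly dispose of via universal measurability of analytic sets; the paper's route sidesteps this because it only ever takes preimages. Note also that the ``converse'' half of your time-reversal dictionary (every $v$-solution issuing from $x$ is a reversed flow line of $\bar v$) is correct but not actually needed: the inclusion $\{x:\#\mathcal{T}^{-1}(x)\ge2\}\subseteq A_v$ uses only the forward direction.
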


%%%%%%%%%%%%%%%%%%%%%%%%%%%%%%%%%%%%%%%%%%%%%%%%%%%%%%

\begin{remark}
    Recall that $X^{\bar v}$ is well defined by Lemma \ref{lemma:extension_flows}.
\end{remark}

\noindent We present a proof of the lemma for the sake of completeness. Just for this proof, it will be convenient to denote by $\mathcal{L}^d$ the $d-$dimensional Lebesgue measure.
\begin{proof}
    We prove that
    \begin{equation*}
        \mathcal{L}^d \left( \left\{ x \in \T^d \, : \, \exists ! \ \gamma  \text{ integral curve of $v$ with } \gamma(0) = x\right\} \right) = 0.
    \end{equation*}
    Equality \eqref{eq:strategy-1} means that for every $x \in \T^d$ there is an integral curve $\bar \gamma_x$ of $\bar v$ such that $\bar \gamma_x(0) \in C$ and $\bar \gamma_x(T) = x$. Observe then that the curve
    \begin{equation*}
        \gamma_x (t) := \bar \gamma_x(T-t)
    \end{equation*}
    is an integral curve of $v$, with $\gamma_x(0) = x$. Now, since $v$ has Sobolev regularity and bounded divergence (actually zero divergence), DiPerna-Lions theory implies that $v$ admits a (unique) regular Lagrangian flow $X$, with compressibility constant $L>0$, i.e. $X(t)_\sharp \mathcal L^d \leq L \mathcal L^d$.
    Observe that 
    \begin{equation*}
    \begin{aligned}
        A_v^c 
        & = \big\{ x \, : \,  \exists ! \  \text{ integral curve of $v$ with } \gamma(0) = x\big\} \\
        & \subseteq 
        \left\{x \ \, : \ X(\cdot, x) = \gamma_x \right\}    
        \cup 
        \left\{ x  \, : \, X(\cdot, x) \text{ is a not an integral curve of } v  \right\}\\
        & \subseteq 
        X(T)^{-1}(C) \cup 
        \left\{ x  \, : \, X(\cdot, x) \text{ is a not an integral curve of $v$} 
        \right\}.
    \end{aligned}
    \end{equation*}
The first inclusion follows from the fact that $\gamma_x$ is an integral curve of $v$ starting in $x$, for all $x$. The second inclusion follows from the fact that $\gamma_x(T) \in C$ for all $x$. Now simply notice that, by definition of regular Lagrangian flow,
\begin{equation*}
    \mathcal{L}^d (X(T)^{-1}(C)) = X(T)_\sharp (\mathcal{L}^d) (C) \leq L \mathcal{L}^d(C) = 0,
\end{equation*}
and, again by definition of regular Lagrangian flow, 
\begin{equation*}
    \mathcal{L}^d \left( \left\{ x  \, : \, X(\cdot, x) \text{ is a not an integral curve of $v$} 
        \right\} \right) = 0. 
\end{equation*}
Hence $\mathcal{L}^d(A_v^c)=0$ and thus $A_v$ has full measure. 
\end{proof}

\subsection{Kumar's construction (adapted to get $W^{1,p}$ smallness)}

The following theorem, but without the smallness assumption \eqref{eq:bb_properties}, was proven in \cite{kumar2023}. We present below a sketch of the proof, showing how Kumar's construction can be easily adapted to achieve also \eqref{eq:bb_properties}.

\begin{theorem}[Kumar, adapted to get smallness]
\label{prop:nonuniq_bb}
	Let $p<d$ and $\alpha\in(0,1)$ be fixed. There is $T_0>0$ such that for every $\varepsilon>0$ there exist a divergence free vector field 
    \begin{equation*}
        \bar v\in C^{\infty}([0,T_0)\times \T^d;\R^d)\cap C([0,T_0];W^{1,p}(\T^d;\R^d))\cap C^{\alpha}([0,T_0]\times \T^d;\R^d)
    \end{equation*}
    such that 
	\begin{equation}\label{eq:bb_properties}
		\norm{\bar v}_{C_tW^{1,p}_x\cap C^{\alpha}_{t,x}}<\varepsilon
	\end{equation}
     and having the following property.
     There exists a set 
     $C\subset \T^d$, with $|C|=0$, such that:
     \begin{equation}\label{eq:flow_cantor}
         X^{\bar v}(T_0,C)=\T^d,
     \end{equation}
      where $X^{\bar v}$ is the unique flow of $v$ defined up to time $t=T_0$.
\end{theorem}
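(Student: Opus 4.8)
The plan is to revisit the construction of \cite{kumar2023} and carry it out with an extra damping parameter; no genuinely new ideas beyond \cite{kumar2023} are needed, only a careful bookkeeping of constants. Recall that there the vector field $\bar v$ is produced as a time-concatenation of smooth divergence-free building blocks $\bar v_n$, $n\in\N$, where $\bar v_n$ is active only on a time slab $[s_n,s_{n+1}]$ with $s_n\nearrow T_0$, operates at a spatial scale $\ell_n\to 0$, and carries an amplitude $a_n$. The parameters $s_n,\ell_n,a_n$ are tuned so that: (a) $\sum_n(s_{n+1}-s_n)=T_0$; (b) $\sup_n\big(\|\bar v_n\|_{C_tW^{1,p}_x}+\|\bar v_n\|_{C^\alpha_{t,x}}\big)<\infty$, where controlling the $W^{1,p}$ norm of each block is exactly where the hypothesis $p<d$ enters (that norm is estimated by a positive power of the small measure of $\mathrm{supp}\,\bar v_n$); and (c) the composed flow, which by Lemma \ref{lemma:extension_flows} extends continuously to $t=T_0$, ``unfolds'' a prescribed Lebesgue-null Cantor set $C\subseteq\T^d$, in the sense that $X^{\bar v}(T_0,C)=\T^d$.

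First I would fix $T_0>0$ to be the time horizon of the construction in \cite{kumar2023} (after a time rescaling one may even take $T_0=1$; in any case $T_0$ does not depend on $\e$). Given $\e>0$, I would then re-run the construction with all the amplitudes damped, i.e.\ replace $a_n$ by $c\,a_n$ for a small constant $c=c(\e)>0$ and, if needed, correspondingly refine the null set $C$ and re-balance the scales $\ell_n$ and the slab widths $s_{n+1}-s_n$ (still keeping $\sum_n(s_{n+1}-s_n)=T_0$). Since each $\bar v_n$ is linear in its amplitude, both $\|\bar v_n\|_{C_tW^{1,p}_x}$ and $\|\bar v_n\|_{C^\alpha_{t,x}}$ get multiplied by $c$, so picking $c$ small enough (depending only on the uniform bound in (b)) yields $\|\bar v\|_{C_tW^{1,p}_x\cap C^\alpha_{t,x}}=\sup_n\|\bar v_n\|_{C_tW^{1,p}_x\cap C^\alpha_{t,x}}<\e$, which is \eqref{eq:bb_properties}. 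Incompressibility is inherited block by block; smoothness on $[0,T_0)$, continuity of $\bar v$ and of $\nabla\bar v$ (in $L^p$) and of $\bar v$ in $C^\alpha$ up to $t=T_0$ all carry over from \cite{kumar2023} with the harmless extra factor $c$ in every estimate; and the extension of the flow up to $t=T_0$ is Lemma \ref{lemma:extension_flows}.

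The hard part---and the only step that is not routine bookkeeping---is to check that the damping does not destroy \eqref{eq:flow_cantor}, i.e.\ that $X^{\bar v}(T_0,C)$ is still all of $\T^d$. This is precisely the point where naive rescalings fail: rescaling ``by oscillation'' (as in Lemma \ref{l:rescaling-oscillation}) preserves the unfolding but not the $W^{1,p}$-smallness, while rescaling ``by concentration'' does the opposite, so one genuinely has to enter the construction. What has to be verified is that the unfolding of $C$ in \cite{kumar2023} is driven by a \emph{divergent} series of stretching increments attached to the blocks $\bar v_n$: multiplying every amplitude by a fixed $c>0$ multiplies this series by $c$ and hence keeps it divergent, so---given that the total time is still $T_0$, the per-block norms are still uniformly bounded, and the combinatorial skeleton prescribing $C$ and the nesting of its generations is untouched---the argument of \cite{kumar2023} goes through verbatim and delivers a null set $C$ with $X^{\bar v}(T_0,C)=\T^d$. (Alternatively one may keep the original amplitudes and instead replace $C$ by a slightly ``fatter'' null Cantor set that the slowed-down flow can still unfold within time $T_0$.) This proves Theorem \ref{prop:nonuniq_bb}; combined with Lemma \ref{lemma:composition_up_to_time_1}, Lemma \ref{lemma:nonuniq_argument} and the density of smooth maps in $\mathcal Z$, it will yield Point (ii) of Theorem \ref{thm:density}.
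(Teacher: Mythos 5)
Your proposal takes a genuinely different route from the paper, and the crucial step is not actually justified. You propose to damp every amplitude $a_n$ by a factor $c=c(\e)$ while keeping the total time budget $\sum_n(s_{n+1}-s_n)=T_0$, and you assert that the covering property $X^{\bar v}(T_0,C)=\T^d$ survives because the unfolding is ``driven by a divergent series of stretching increments'' which stays divergent after multiplication by $c$. This is exactly the point that needs a proof, and it is doubtful as stated: for an (essentially autonomous-in-each-slab) block, multiplying the amplitude by $c$ while keeping the slab $[s_n,s_{n+1}]$ fixed is the same as running the original block for only a fraction $c$ of its intended duration, so each block completes only part of its geometric task (it does not move the relevant cubes all the way to their targets). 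Compensating by refining the scales $\ell_n$ is rescaling ``by oscillation'', which restores the geometry but destroys the $W^{1,p}$-smallness; compensating by lengthening the slabs breaks the constraint $\sum_n(s_{n+1}-s_n)=T_0$. In other words, your argument runs straight into the oscillation/concentration dilemma that you yourself identify at the start of the paragraph, and the sentence ``the argument of \cite{kumar2023} goes through verbatim'' is where the gap sits. The alternative you mention in parentheses (a ``fatter'' null Cantor set) is likewise not developed.

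The paper's proof avoids all of this with a cleaner device that requires no re-tuning of Kumar's parameters. Writing Kumar's field as $\mathrm v=\sum_{i\ge 1}\mathrm v_i$ with the blocks supported on disjoint time slabs $(\tau_i,\tau_{i+1})$, $\tau_i\nearrow T_0$, and the series summable in $C_tW^{1,p}_x\cap C^\alpha_{t,x}$, one simply takes the \emph{tail} $v_n:=\sum_{i\ge n}\mathrm v_i$, which is small in the required norms for $n$ large by summability. The null set is then replaced by $\mathcal K^n:=X^{\mathrm v}(\tau_n,0,\mathcal K)$, the image of Kumar's Cantor set under the smooth flow of the discarded head (still Lebesgue-null since that flow is a diffeomorphism on $[0,\tau_n]$). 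Since $v_n$ vanishes before $\tau_n$ and coincides with $\mathrm v$ afterwards, the semigroup property (Lemma \ref{lemma:extension_flows}\ref{pt:semigroup}) gives $X^{v_n}(T_0,\mathcal K^n)=X^{\mathrm v}(T_0,\mathcal K)=\T^d$ exactly, with no quantitative re-examination of Kumar's construction. If you want to keep your amplitude-damping strategy, you would have to open up \cite{kumar2023} and verify quantitatively that the covering survives; the tail-truncation argument is the way to avoid that.
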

\begin{remark}
    Recall that $X^{\bar v}$ is well defined up to time $T_0$ by Lemma \ref{lemma:extension_flows}.
\end{remark}

\begin{proof}[Sketch of the proof]

For given $p<d$, $\alpha \in (0,1)$, in \cite{kumar2023} the author constructs an incompressible vector field $$\mathrm{v} \in C^{\infty}([0,T_0)\times \T^d;\R^d)\cap C([0,T_0];W^{1,p}(\T^d;\R^d))\cap C^{\alpha}([0,T_0]\times \T^d;\R^d),$$ for some $T_0>0$, that can be written as
\begin{equation*}
    \rm v(t,x)=\sum_{i=1}^{\infty}\rm v_{i}(t,x),
\end{equation*}
where the family of vector fields $\{\rm v_{i}\}_{i\in\N}$ has the following properties:
\begin{enumerate}
    \item $\mathrm{ v_{i}}\in C^{\infty}([0,T_0]\times \T^d;\R^d)$ for any $i\in\N$;
    \item $\rm v_{i}$ is divergence free for any $i\in\N$;
    \item there exists a sequence of times $\{\tau_i\}_{i\in\N}$, with $\tau_j < \tau_{j+1}$ for all $j\in\N$ and $\tau_j\to T_0$, such that supp$_t \,\rm v_{i}\,\subseteq (\tau_i,\tau_{i+1})$; 
    \item the sequence $\{\rm v_{i}\}_{i\in\N}$ is summable in $C([0,T_0];W^{1,p}(\T^d))\cap C^{\alpha}([0,T_0]\times \T^d)$.
\end{enumerate}

The crucial property of $\rm v$ is that  there exists a zero measure Cantor set, which we denote by $\mathcal{K}\subset \T^d$, $\mathcal{L}^d(\mathcal{K})=0$, such that 
\begin{equation}
\label{eq:cantor}
    X^{\rm v}(T_0,\mathcal{K})=\T^d.
\end{equation}
Notice that by Lemma \ref{lemma:extension_flows}  the flow $X^{\rm v}$ is well defined up to time $t=T_0$.
For a fixed $n$, we define the (incompressible) vector field
\begin{equation*}
    v_{n}(t,x):=\sum_{i=n}^{\infty}\mathrm{v}_{i}(t,x).
\end{equation*}
and the corresponding set 
\begin{equation}\label{eq:cantor_n}
    \mathcal{K}^n:=X^{\rm v}(\tau_n,0,\mathcal{K}).
\end{equation}
Observe that $\rm v$ is smooth on $[0,\tau_n] \subseteq [0,T_0)$ and thus 
$\mathcal{L}^d(\mathcal{K}^n)=0$.
From definition \eqref{eq:cantor_n} and from the semigroup property
of the flow up to time $t=T_0$, which holds thanks to point \eqref{pt:semigroup} of Lemma \ref{lemma:extension_flows}, we deduce that 
\begin{equation}
\label{eq:cantor-vn}
\begin{aligned}
     &X^{v_n}(T_0,\mathcal{K}^n)&&=X^{v_n}(T_0,0,\mathcal{K}^n)\\
     & \textit{\small(semigroup property of $X^{v_n}$)} &&=X^{v_n}(T_0,\tau_n,X^{v_n}(\tau_n,0,\mathcal{K}^n)\\
     & \textit{\small ($v_n=0$ for $t\le \tau_n$)} && = X^{v_n}(T_0,\tau_n,\mathcal{K}^n) \\
     & \textit{\small ($\mathtt v = v_n$ on $[\tau_n, T_0]$)}
     && = X^{\rm v} (T_0,\tau_n,\mathcal{K}^n)\\
     &  \textit{\small(by definition of $\mathcal{C}^n$)} 
     &&=X^{\rm v} (T_0,\tau_n,X^{\mathtt v}(\tau_n,0,\mathcal{K}))\\
     &\textit{\small (semigroup property of $X^{ \rm v}$)} &&=X^{\rm v}(T_0,0,\mathcal{K}) =X^{\rm v}(T_0,\mathcal{K})\\
     & \textit{\small (by \eqref{eq:cantor})} && =\T^d.
\end{aligned}
\end{equation}
Notice now that, by summability of the sequence $\{\rm v_{i}\}_{i\in\N}$ in $C([0,T_0];W^{1,p}(\T^d))\cap C^{\alpha}([0,T_0]\times \T^d)$, the vector field $\bar v$ defined as $v_n$, for $n$ sufficiently large, satisfies \eqref{eq:bb_properties} and, by \eqref{eq:cantor-vn}, it satisfies also \eqref{eq:flow_cantor}, with $C := \mathcal{K}^n$.  
\end{proof}

\subsection{Proof of Point (ii) of Theorem \ref{thm:density}}
As observed in the proof of Point (i), see Section \ref{ss:proof-i}, we can assume w.l.o.g. that $T=T_0$, where $T_0$ is the time given by Theorem \ref{prop:nonuniq_bb}. The general case follows from the rescaling $t \mapsto tT/T_0$, $u \mapsto uT/T_0$. 

By density of smooth maps in $\mathcal{Z}$ it is enough to show that for every incompressible vector field $u \in C^\infty([0,T_0] \times \T^d)$ and for every $\varepsilon>0$ there is another incompressible vector field $$w\in  C([0,T_0];W^{1,p}(\T^d;\R^d))\cap C^{\alpha}([0,T]\times \T^d;\R^d)$$ such that 
	\begin{equation*}
		\norm{u-w}_{C_tW^{1,p}_x\cap C_{t,x}}<\varepsilon
	\end{equation*}
	and 
    \begin{equation}
    \label{eq:kumar3}
     \text{$A_w$ has full measure}
    \end{equation}

\noindent (recall the definition of $A_w$ in \eqref{eq:non-uniq-ode}). 

Let thus $u \in C^\infty$ and $\varepsilon>0$ be given. Let $\bar u (t,x):=-u(T_0-t,x)$ be the time reversal  of $u$. By Theorem \ref{prop:nonuniq_bb}, there is  
\begin{equation}
\label{eq:reg-v}
        \bar v\in C^{\infty}([0,T_0)\times \T^d;\R^d)\cap C([0,T_0];W^{1,p}(\T^d;\R^d))\cap C^{\alpha}([0,T_0]\times \T^d;\R^d)
\end{equation}
and a set $C \subseteq \T^d$, $\mathcal{L}^d(C) = 0$ such that $$X^{\bar v} (T_0, C) = \T^d$$ and
\begin{equation}
\label{eq:size-v}
    \norm{\left(\nabla \Phi^{\bar u}\right)^{-1}\bar v(t,\Phi^{\bar u})}_{C_t(W^{1,p}\cap C)_x} < \varepsilon,
\end{equation}
where $\Phi^{\bar u}$ is the inverse flow map associated to $\bar u$. Set
\begin{equation*}
	\bar w(t,x):=\bar u(t,x) + \left(\nabla \Phi^{\bar u}(t,x)\right)^{-1}\bar v(t,\Phi^{\bar u}(t,x)).
\end{equation*}
Observe that by \eqref{eq:size-v}, 
\begin{equation}
    \label{eq:size-u-minus-w}
\norm{\bar w - \bar u }_{C_t(W^{1,p}\cap C)_x} < \varepsilon. 
\end{equation}

Since $\bar u$ and $\Phi^{\bar u }$ are smooth, the vector field $\bar w$ inherits the same regularity as $\bar v$ in \eqref{eq:reg-v}. 
Moreover, since $\bar v$ and $\bar u$ are incompressible and $\bar u$ is also smooth, by Remark \ref{rmk:div-u-equal-div-w} the vector field $\bar w$ has zero distributional divergence.
We can thus apply Lemma  \ref{lemma:composition_up_to_time_1} to get
\begin{equation*}
	X^{\bar w}(T_0,C)=X^{\bar u}(T_0,X^{\bar v}(T_0,C))=X^{\bar u}(T_0,\T^d)=\T^d,
\end{equation*}
where, in the last equality, we  used that  $X^{\bar u}$ is a smooth diffeomorphism of $\T^d$. Therefore $\bar w$  satisfies the assumptions of Lemma \ref{lemma:nonuniq_argument} and thus its time reversal $w(t) := \bar w(T_0-t)$ has the property \eqref{eq:kumar3}. Moreover, by \eqref{eq:size-u-minus-w},
\begin{equation*}
\norm{w - u }_{C_t(W^{1,p}\cap C)_x} = \norm{\bar w - \bar u }_{C_t(W^{1,p}\cap C)_x} < \varepsilon,
\end{equation*}
thus concluding the proof of Theorem \ref{thm:density}. \qed

\begin{remark}
\label{rmk:holder-not-work}
    By construction, the perturbation $v$ coming from Proposition \ref{prop:nonuniq_bb} is arbitrarily small not only in the uniform norm but also in $C^{\alpha}$. 
    In other words, from the previous proof we could deduce that any smooth function is an accumulation point in $C^{\alpha}$ of the subset of vector fields having non-uniqueness of trajectories in almost every point. 
    On the other hand, $C^{\infty}$ is not dense in $C^{\alpha}$ with its natural topology, so that we can not state a density result also in this setting.
\end{remark}

 \bibliographystyle{siam}
 \bibliography{bibliography}

\end{document}